\theoremstyle{plain}
\newtheorem{thm}{Theorem}[section]
\newtheorem{lemma}[thm]{Lemma}
\newtheorem{prop}[thm]{Proposition}
\newtheorem*{thm*}{Theorem}
\theoremstyle{remark}
\newtheorem{rmk}[thm]{Remark}
\theoremstyle{definition}
\newtheorem{defn}[thm]{Definition}
\numberwithin{equation}{section}
\def\N{\mathbb{N}}
\def\P{\mathbb{P}}
\def\R{\mathbb{R}}
\def\Z{\mathbb{Z}}
\def\cA{\mathcal{A}}
\def\cC{\mathcal{C}}
\def\cF{\mathcal{F}}
\def\1{\mathbf{1}}
\def\PP{\mathbb{P}_{n,\alpha}}
\def\EE{\mathbb{E}_{n,\alpha}}
\newcommand{\rArea}[1]{A(#1)}
\DeclareMathOperator{\interior}{int}
\DeclareMathOperator{\Line}{Line}
\DeclareMathOperator{\Per}{Per}
\DeclareMathOperator{\Var}{Var_{n,\alpha}}
\newenvironment{PfofEmptinessThm}[1]
{\par\vskip2\parsep\noindent{\sc Proof of Theorem\ \ref{Thm:EmptinessIntro}.}}{{\hfill
$\Box$}
\par\vskip2\parsep}
\newenvironment{PfofEntropyThm}[1]
{\par\vskip2\parsep\noindent{\sc Proof of Theorems\ \ref{Thm:EntropyIntro} and \ref{Thm:PeriodicEntropyIntro}.}}{{\hfill
$\Box$}
\par\vskip2\parsep}
\newenvironment{PfofExoticThm}[1]
{\par\vskip2\parsep\noindent{\sc Proof of Theorem\ \ref{Thm:ExoticBehavior}.}}{{\hfill
$\Box$}
\par\vskip2\parsep}
\title{Random $\Z^d$-shifts of finite type}
\begin{document}

\author{Kevin McGoff}
\address{Duke University}
\email{mcgoff@math.duke.edu}

\author{Ronnie Pavlov}
\address{University of Denver}
\email{rpavlov@du.edu}

\keywords{symbolic dynamics; subshifts of finite type; topological entropy; random dynamical systems}


\begin{abstract}
In this work we consider an ensemble of random $\Z^d$-shifts of finite type ($\Z^d$-SFTs) and prove several results concerning the behavior of typical systems with respect to emptiness, entropy, and periodic points. These results generalize statements made in \cite{McGoff} regarding the case $d=1$. 

Let $\mathcal{A}$ be a finite set, and let $d \geq 1$. For $n$ in $\N$ and $\alpha$ in $[0,1]$, define a random subset $\omega$ of $\mathcal{A}^{[1,n]^d}$ by independently including each pattern in $\mathcal{A}^{[1,n]^d}$ with probability $\alpha$. Let $X_{\omega}$ be the (random) $\Z^d$-SFT built from the set $\omega$. For each $\alpha \in [0,1]$ and $n$ tending to infinity, we compute the limit of the probability that $X_{\omega}$ is empty, as well as the limiting distribution of entropy of $X_{\omega}$. Furthermore, we show that the probability of obtaining a nonempty system without periodic points tends to zero.

For $d>1$, the class of $\Z^d$-SFTs is known to contain strikingly different behavior than is possible within the class of $\Z$-SFTs. Nonetheless, the results of this work suggest a new heuristic: typical $\Z^d$-SFTs have similar properties to their $\Z$-SFT counterparts.

\end{abstract}

\maketitle


\section{Introduction}

A shift of finite type (SFT) is a dynamical system defined by finitely many local rules. SFTs have been studied for their own sake \cite{Kitchens,LindMarcus}, for their connections to other dynamical systems \cite{Bowen,DGS,Keane}, and for their connections to the thermodynamic formalism and equilibrium states in statistical mechanics \cite{Bowen,Ruelle}. We consider SFTs defined on a lattice $\Z^d$, with $d \geq 1$. As these systems may be defined by finitely many combinatorial constraints, they naturally lend themselves to probabilistic models. In fact, random $\Z^d$-SFTs may be viewed as a class of random constraint satisfaction problems (for other examples of random constraint satisfaction problems, see \cite{AM2013,ACR2011,ADP2005,AP2004,Friedgut1999,KMRSZ2007}). 

In previous work \cite{McGoff}, the first author began a line of investigation aimed at describing likely properties of $\Z$-SFTs that have been selected according to a natural probability distribution, there called random SFTs. The main goal of the present work is to investigate some likely properties of random $\Z^d$-SFTs for arbitrary $d \geq 1$. 

For $d > 1$, the class of $\Z^d$-SFTs contains behavior that cannot appear in $\Z$-SFTs. Consider the following examples of this phenomenon. 
\begin{itemize}
\item There is an algorithm that decides in finite time whether a given $\Z$-SFT is empty, whereas no such algorithm exists for $\Z^d$-SFTs when $d > 1$ \cite{Berger1966}. 
\item Every nonempty $\Z$-SFT contains a finite orbit, but for each $d > 1$, there exist nonempty $\Z^d$-SFTs that contain no finite orbits \cite{Berger1966}.
\item The set of real numbers that appear as the entropy of some $\Z$-SFT has an algebraic characterization \cite{LindRealization}, whereas the corresponding set for $\Z^d$-SFTs  with $d >1$ has a computational characterization \cite{HM}.
\end{itemize}
For further illustrations of this phenomenon, see \cite{BPS,BurtonSteif,Hochman,QuasSahin2003}. In light of the presence of such exotic behavior within the class of $\Z^d$-SFTs for $d >1$, one may ask, ``how typical is this behavior?'' In principle, the setting of random $\Z^d$-SFTs allows one to answer such questions in a probabilistic sense. From this point of view, a property of $\Z^d$-SFTs may be considered typical if it holds with high probability. Indeed, our main results establish typical properties of $\Z^d$-SFTs in this sense.

We construct random $\Z^d$-SFTs as follows. Consider a finite set $\cA$ and a natural number $d$. Let $F_n = [1,n]^d$ be the hypercube with side length $n$ in $\Z^d$. Consider $\cA^{F_n}$, the set of all patterns on $F_n$, and let $\Omega_n$ be the power set of $\cA^{F_n}$. For $n \geq 1$ and $\alpha \in [0,1]$, let $\PP$ denote the probability measure on $\Omega_n$ given by including elements of $\cA^{F_n}$ independently with probability $\alpha$ (and excluding them with probability $1-\alpha$). The measure $\PP$ depends on $\mathcal{A}$ and $d$, but we suppress this dependence in our notation. For a subset $\omega$ of $\cA^{F_n}$, let $X_{\omega}$ be the $\Z^d$-SFT built from $\omega$:
\begin{equation*}
X_{\omega} = \{ x \in \cA^{\Z^d} : \forall p \in \Z^d, \, x|_{F_n+p} \in \omega \}.
\end{equation*}
We view the elements of $\cA^{F_n} \setminus \omega$ as ``forbidden patterns,'' which place constraints on the configurations allowed in $X_{\omega}$.
When $\omega$ is chosen at random according to $\PP$, we view $X_{\omega}$ as a random $\Z^d$-SFT.  

Our first main result concerns the probability that $X_{\omega}$ is the empty set. To state the result, we define the following zeta function as a formal power series:
\begin{equation*}
\zeta_X(t) = \prod_i \bigl( 1 - t^{|\gamma_i|} \bigr)^{-1},
\end{equation*}
where $\{\gamma_i\}_i$ is an enumeration of all the finite orbits in $\cA^{\Z^d}$ (see Section \ref{Sect:Preliminaries} for precise definitions). When $d = 1$, the function $\zeta_X(t)$ is the Artin-Mazur zeta function (see \cite{LindMarcus}); however, for $d > 1$, the function $\zeta_X(t)$ defined here may differ from the zeta function for $\Z^d$-actions defined in \cite{LindZeta}.
\begin{thm} \label{Thm:EmptinessIntro}
Let $\mathcal{A}$ be a finite set, and let $d$ be in $\N$. For each $n$, let $\mathcal{E}_n \subset \Omega_n$ be the event that $X_{\omega} = \varnothing$. Then for each $\alpha$ in $[0,1]$,
\begin{equation*}
\lim_n \PP(\mathcal{E}_n) = \left\{ \begin{array}{ll} 
                                                      \zeta_X(\alpha)^{-1}, & \text{ if } \alpha \in [0,\,|\cA|^{-1}), \\
 						0, & \text{ if } \alpha \in [|\cA|^{-1},\,1].
                                                    \end{array}
			 \right.
\end{equation*}
Furthermore, for $\alpha \neq |\cA|^{-1}$, the rate of convergence to this limit is at least exponential in $n$.
\end{thm}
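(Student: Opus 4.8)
The plan is to prove the two regimes separately. For $\alpha \in [0,\,|\cA|^{-1})$ the heuristic is that, in this subcritical range, $X_\omega$ is empty unless it is ``propped up'' by a periodic point: with high probability the only nonempty $X_\omega$ are those whose allowed set $\omega$ happens to contain every pattern appearing in some periodic configuration. So the first step is to show that, as $n \to \infty$, the event $\mathcal{E}_n$ agrees (up to probability tending to $0$) with the event that \emph{no} finite orbit survives, i.e. that for every finite orbit $\gamma$ there is some window pattern of $\gamma$ not in $\omega$. Finite orbits $\gamma$ with period vector giving $|\gamma|$ points contribute roughly $|\gamma|$ distinct $F_n$-patterns (for $n$ large relative to the periods), so the probability that a \emph{fixed} orbit $\gamma$ survives is $\alpha^{|\gamma|}$ plus lower-order corrections, and distinct short orbits are asymptotically independent. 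An inclusion–exclusion / Bonferroni argument over all finite orbits then yields $\P(\text{no orbit survives}) \to \prod_i (1 - \alpha^{|\gamma_i|}) = \zeta_X(\alpha)^{-1}$; convergence of the product requires $\alpha < 1$ and that the number of orbits of ``size'' $k$ grows at most exponentially in $k$, which is immediate since there are at most $|\cA|^{k}$ configurations fixed by a given finite-index subgroup of that index.

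The harder direction is the reverse inclusion: I must show that with high probability, if $X_\omega$ contains \emph{no} periodic point then it is in fact empty. This is where the restriction $\alpha < |\cA|^{-1}$ is used. The idea is a counting/union-bound argument on patterns: a nonempty $X_\omega$ must contain arbitrarily large locally admissible patterns, and by a compactness/extension argument it suffices to bound the probability that $\omega$ admits a large globally valid pattern on a big box $[1,N]^d$ that is not itself forced to be eventually periodic. One estimates the expected number of $\omega$-admissible patterns on $[1,N]^d$: each such pattern uses $(N-n+1)^d$ overlapping $F_n$-windows, and a crude bound gives expected count at most $|\cA|^{N^d} \alpha^{(N-n+1)^d}$, which for $\alpha < |\cA|^{-1}$ decays doubly-exponentially in $N$ once $N \gg n$. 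The subtlety is that overlapping windows are not independent, so one works instead with a \emph{non-overlapping} sub-collection of disjoint translates of $F_n$ inside $[1,N]^d$ — there are $\lfloor N/n \rfloor^d$ of them, their patterns are genuinely independent, and the expected number of ways to fill $[1,N]^d$ so that all disjoint windows lie in $\omega$ is at most $|\cA|^{N^d}\alpha^{\lfloor N/n\rfloor^d}$. Choosing $N = Cn$ with $C$ large enough that $C^d \log|\cA| < (C-1)^d \log(1/\alpha)$ (possible since $\alpha<|\cA|^{-1}$) makes this tend to $0$ exponentially in $n$. Combined with the fact that any periodic point of period at most $n$ that survives would already force $X_\omega$ nonempty via a surviving orbit, this shows $\P(X_\omega \neq \varnothing \text{ and aperiodic}) \to 0$, completing the subcritical case and simultaneously giving the exponential rate for $\alpha \neq |\cA|^{-1}$.

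For $\alpha \in [|\cA|^{-1},\,1]$ one shows $\P(\mathcal{E}_n) \to 0$ by exhibiting, with high probability, an explicit nonempty subsystem of $X_\omega$. The natural candidate is a full shift on a sub-alphabet, or more robustly the set of all configurations that are constant along $d-1$ of the coordinate directions, reducing to a one-dimensional problem; alternatively one directly second-moments the number of surviving ``safe symbols'' or constant-row patterns. Concretely: the expected number of patterns in $\cA^{F_n}$ that survive is $\alpha |\cA|^{n^d}$, which for $\alpha \geq |\cA|^{-1}$ is at least $|\cA|^{n^d - 1}$, enormous; one then argues that among the surviving patterns there is with high probability a ``consistent'' family closed under the overlap/gluing relation needed to build a point of $X_\omega$ — for instance a surviving pattern all of whose sub-blocks agree with translates, which can be arranged by a first-moment lower bound on periodic points of a suitable fixed small period together with a second-moment concentration. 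For $\alpha$ strictly above $|\cA|^{-1}$ this count has a multiplicative gap from the threshold, giving the exponential rate; the boundary case $\alpha = |\cA|^{-1}$ is allowed to converge at an unspecified (sub-exponential) rate, consistent with the statement. The main obstacle throughout is handling the dependence among overlapping windows: the disjoint-translate trick in the supercritical lower bound and the subcritical upper bound is what makes the estimates tractable, at the cost of non-sharp constants, which is acceptable since only the location of the threshold and the existence of exponential rates are asserted.
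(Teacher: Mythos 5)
Your overall architecture matches the paper's (short orbits give the $\zeta_X(\alpha)^{-1}$ factor; the work is to show that, subcritically, a system with no surviving finite orbit is empty with high probability; supercritically, surviving orbits force nonemptiness), but the central quantitative step is missing and the substitute you propose is incorrect. The bound $|\cA|^{N^d}\alpha^{\lfloor N/n\rfloor^d}$ on the expected number of admissible $[1,N]^d$-patterns is false: for a fixed pattern $u$, the probability that all of its (disjoint or not) $F_n$-windows lie in $\omega$ is $\alpha^{|W_n(u)|}$, the exponent being the number of \emph{distinct} window patterns, not the number of windows. A constant pattern survives with probability $\alpha$, so the correct first moment is $\sum_j \alpha^j |N_{n,k}^j|$ (with $N_{n,k}^j$ the set of patterns having exactly $j$ distinct $F_n$-subpatterns), and this is at least $\alpha|\cA|$ — it does not tend to zero, so no Markov/union bound over all patterns can work. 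Passing to disjoint translates does not help, because the obstruction is not probabilistic dependence between windows but the combinatorial fact that a pattern can have very few distinct windows. The paper's entire Sections 3.2--3.4 exist to handle this: patterns with $j\le n/2$ are shown (Lemma \ref{Lemma:Smallj}, via Morse--Hedlund/Fine--Wilf) to contain a short finite orbit and are thus absorbed into the zeta factor, while for $j>n/2$ one needs the sharp count $|N_{n,k}^j|\lesssim |\cA|^{(1+\epsilon)j}\cdot k^{O(\delta j/\log n)}$, obtained from the repeat-cover machinery (Lemmas \ref{Lemma:DefineAWord}, \ref{Lemma:Nuggets}, \ref{Lemma:FinalCoveringLemma}), to make $\sum_{j>n/2}\alpha^j|N_{n,k}^j|$ exponentially small when $\alpha|\cA|^{1+\epsilon}<1$. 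Nothing in your sketch supplies this count, and without it the subcritical lower bound (and Theorem \ref{Thm:ExoticBehavior}, which you are implicitly invoking) is unproven. Note also that the paper converts the per-pattern estimates into a lower bound on $\PP(\mathcal{E}_n)$ via the FKG inequality applied to the monotone events $F(u)$, rather than by subtracting a bad-event probability; your route would additionally require the exact statement you have not proved.

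Two smaller points. First, in the supercritical case, forbidding all orbits of a \emph{fixed} period has probability bounded away from zero for any $\alpha<1$, so your ``fixed small period'' second-moment argument cannot give $\PP(\mathcal{E}_n)\to 0$; you must use the family of orbits of size up to $n/2$, whose forbidding events are genuinely independent because their $F_n$-pattern sets are disjoint (Lemma \ref{Lemma:Independence}, a Fine--Wilf argument), together with the lower bound $|P_j|\ge |\cA|^j/(2j)$ to force the product $\prod_{|\gamma|\le n/2}(1-\alpha^{|\gamma|})$ to zero (doubly exponentially when $\alpha>|\cA|^{-1}$). Second, for the convergence $\P(\text{no orbit survives})\to\zeta_X(\alpha)^{-1}$ you do not need inclusion--exclusion: the same disjointness lemma gives exact independence of the events for orbits of size $\le n/2$, and the tail of the product is controlled by $|P_j|\le j^{d+1}|\cA|^j$.
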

Notice that for $d >1$, despite the fact that there is no algorithm to decide in finite time whether a given $\Z^d$-SFT $X_{\omega}$ is empty, we may nonetheless compute the limit of the probability of the event $\mathcal{E}_n$. In particular, the limit of the probability of emptiness is positive for $\alpha < |\cA|^{-1}$ and equal to zero for $\alpha \geq |\cA|^{-1}$. A general discussion of random constraint satisfaction problems is beyond the scope of this work; however, let us note that in the language of random constraint satisfaction problems, Theorem \ref{Thm:EmptinessIntro} identifies $\alpha = |\cA|^{-1}$ as the satisfiability threshold for this model of random $\Z^d$-SFTs. 

Our second main result concerns the limiting distribution of entropy as $n$ tends to infinity. Let $h(X)$ denote the (topological) entropy of a $\Z^d$-SFT $X$:
\begin{equation*}
h(X) = \lim_k \frac{1}{k^d} \log \# \bigl\{ x|_{F_k} : x \in X \bigr\}.
\end{equation*}
Also, let $\log^+(x) = \max(0,\log(x))$.
\begin{thm} \label{Thm:EntropyIntro}
 Let $\mathcal{A}$ be a finite set and $d$ be in $\N$. For each $\alpha \in [0,1]$ and $\epsilon >0$, there exist $\rho>0$ and $n_0$ such that if $n \geq n_0$, then
\begin{equation*}
 \PP \biggl( \Bigl| h(X_{\omega}) - \log^+(\alpha |\cA|) \Bigr| \geq \epsilon \biggr) \leq \exp\Bigl(-\rho n^d\Bigr).
\end{equation*}
\end{thm}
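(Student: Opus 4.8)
The plan is to prove concentration of $h(X_\omega)$ around $\log^+(\alpha|\cA|)$ by establishing a matching upper and lower bound, each holding with probability at least $1 - \exp(-\rho n^d)$, and then taking the smaller of the two values of $\rho$. The upper bound $h(X_\omega) \le \log(\alpha|\cA|) + \epsilon$ is the soft direction: since $X_\omega \subseteq \cA^{\Z^d}$ uses only patterns from $\omega$, and any globally admissible configuration restricted to a large box $F_{kn}$ must tile by allowed $F_n$-patterns, one bounds $\#\{x|_{F_{kn}} : x \in X_\omega\}$ by the number of ways to fill a coarser lattice of $(k)^d$ cells (each an $F_n$-block) consistently using only patterns from $\omega$. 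A crude-but-sufficient bound is $\#\{x|_{F_{kn}}\} \le |\omega|^{k^d}$ (ignore all overlap constraints except that each $F_n$-block lies in $\omega$), whence $h(X_\omega) \le n^{-d}\log|\omega|$. Since $|\omega|$ is $\mathrm{Binomial}(|\cA|^{n^d}, \alpha)$, a Chernoff bound gives $|\omega| \le 2\alpha|\cA|^{n^d}$ with probability $1 - \exp(-c\alpha|\cA|^{n^d})$ when $\alpha > 0$, so $h(X_\omega) \le n^{-d}\log(2\alpha) + \log|\cA| \le \log(\alpha|\cA|) + \epsilon$ for $n$ large; and trivially $h(X_\omega) \le \log|\cA|$ always, covering the $\alpha|\cA| \le 1$ case where the target is $0$ anyway (combined with $h \ge 0$).

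The lower bound $h(X_\omega) \ge \log^+(\alpha|\cA|) - \epsilon$ is the substantive direction and the main obstacle. When $\alpha|\cA| \le 1$ there is nothing to prove since $h \ge 0 = \log^+(\alpha|\cA|)$, so assume $\alpha|\cA| > 1$, i.e. $\alpha > |\cA|^{-1}$. Here I would exhibit, with high probability, a large subsystem of $X_\omega$ of controlled entropy. The natural construction: fix a scale $m$ (depending on $\epsilon$ but not $n$, with $m$ a multiple of... actually $n \mid m$ or work with $m \gg n$), and look for a large collection $\mathcal{G}$ of $\cA$-patterns on $F_m$ that are \emph{internally consistent} (every $F_n$-subpattern lies in $\omega$) and that moreover \emph{agree on their boundary shells} of width $n$ — concretely, all patterns in $\mathcal{G}$ restrict to one fixed pattern on the set $F_m \setminus F_{m-n}^{\circ}$ near the boundary (or on a fixed frame of width $n$). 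Any such $\mathcal{G}$ can be freely concatenated across a coarse $\Z^d$-lattice of $F_m$-cells to produce points of $X_\omega$, yielding $h(X_\omega) \ge m^{-d}\log|\mathcal{G}|$. So it suffices to show that with probability $1 - \exp(-\rho n^d)$ there exists such a $\mathcal{G}$ with $|\mathcal{G}| \ge (\alpha|\cA|)^{(m-2n)^d}$ roughly — equivalently, that after fixing a good boundary frame, a positive (indeed $\approx$ full) fraction of the $|\cA|^{(\text{interior})}$ ways to fill the interior have all their $F_n$-windows in $\omega$.

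To find the frame and estimate $|\mathcal{G}|$ I would proceed in two layers. \textbf{Frame existence:} partition, or rather consider the $|\cA|^{n^d}$ possible contents of a single $F_n$-block; the event ``this block $\in \omega$'' has probability $\alpha$. For a frame of width $n$ around $F_m$ one needs all of its constituent $F_n$-windows to lie in $\omega$; by a union/first-moment argument over the (boundedly many, for fixed $m$) possible frame patterns, the probability that \emph{no} admissible frame exists is small — one picks $m$ large enough that the expected number of admissible frames is huge, then a second-moment or Azuma argument (the indicator is a function of the $|\cA|^{n^d}$-many independent coordinates indexed by $F_n$-patterns) gives concentration with the right $n^d$ exponent. \textbf{Interior count, conditioned on a frame:} this is where one needs the quantitative heart of \cite{McGoff}-style arguments. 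Fix the admissible frame; the interior of $F_m$ can be filled in $|\cA|^{I}$ ways where $I = (m-2n)^d$-ish; call a filling \emph{bad} if some interior $F_n$-window is not in $\omega$. By inclusion–exclusion / the zeta-function machinery already set up for Theorem~\ref{Thm:EmptinessIntro} (or simply: the fraction of fillings that are good is, in expectation over $\omega$, close to $\alpha^{(\#\text{windows})} \cdot (\text{corrections})$, but more cleanly one shows $\E[|\mathcal{G}|] \ge (\alpha |\cA|)^{I}\cdot(1-o(1))$ and $\Var$ is small relative to $\E^2$), conclude $|\mathcal{G}| \ge \tfrac12 (\alpha|\cA|)^{I}$ with probability $1 - \exp(-\rho' n^d)$. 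Then $h(X_\omega) \ge m^{-d}\log(\tfrac12(\alpha|\cA|)^{(m-2n)^d}) \to \log(\alpha|\cA|)$ as $m \to \infty$, so choosing $m = m(\epsilon)$ large enough finishes it. The delicate point throughout is arranging \emph{all} the probabilistic estimates to have exponent of order $n^d$ (not merely $m^d$ or a constant): this forces the frame/interior analysis to be carried out at scale $n$ via the $|\cA|^{n^d}$ independent coordinates, and is exactly where I expect the bulk of the technical work — controlling the dependence between the events ``$F_n$-window $w \in \omega$'' as $w$ ranges over the many overlapping windows inside $F_m$, presumably handled by a martingale (Azuma–Hoeffding) bound on the bounded-difference functional $\omega \mapsto \log|\mathcal{G}(\omega)|$.
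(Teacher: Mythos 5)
Your upper bound is the step that fails, and it fails quantitatively rather than technically. Bounding the number of $F_{kn}$-patterns of $X_\omega$ by $|\omega|^{k^d}$ (one free choice of allowed block per cell of the disjoint $n$-block lattice) gives $h(X_\omega) \leq n^{-d}\log|\omega|$, and with $|\omega| \approx \alpha|\cA|^{n^d}$ this is $\log|\cA| + n^{-d}\log(2\alpha)$, which converges to $\log|\cA|$, \emph{not} to $\log(\alpha|\cA|) = \log|\cA| + \log\alpha$. Your claimed chain $n^{-d}\log(2\alpha) + \log|\cA| \leq \log(\alpha|\cA|) + \epsilon$ amounts to $n^{-d}\log(2\alpha) \leq \log\alpha + \epsilon$; the left side tends to $0$ while the right side is negative whenever $\alpha < e^{-\epsilon}$, so the argument only covers $\alpha$ within $\epsilon$ of $1$. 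The factor of $\alpha$ \emph{per site} (rather than per disjoint block) comes exactly from the overlap constraints you discard: an admissible pattern on $F_k$ has $(k-n+1)^d \approx k^d$ overlapping $F_n$-windows, each of which must land in $\omega$. This is why the paper works with $\phi_{n,k}$, the number of $F_k$-patterns $u$ with \emph{all} windows allowed, so that $\EE(\xi_u) = \alpha^{|W_n(u)|}$; the price is controlling the contribution of patterns with few distinct windows (for which $\alpha^{|W_n(u)|}$ is large), which is what Lemmas \ref{Lemma:DefineAWord}, \ref{Lemma:Nuggets}, and \ref{Lemma:GeneralCoding} are for, followed by a Chebyshev bound on $\phi_{n,k}$ with $k = nf(n)$ to get the exponent $n^d$ (Lemmas \ref{Lemma:EPhi} and \ref{Lemma:VarPhi}). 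None of this is replaceable by a Chernoff bound on $|\omega|$.

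Two further gaps. First, the case $\alpha|\cA| \leq 1$ is not "covered" by $h \geq 0$ together with $h \leq \log|\cA|$: the theorem there requires $h(X_\omega) < \epsilon$ with probability $1-\exp(-\rho n^d)$, which is again the (nontrivial) upper bound at or below threshold; the paper handles it by monotone coupling with $\alpha'$ chosen so that $\log(\alpha'|\cA|) = \epsilon/2$ and invoking the supercritical estimate. Second, in your lower bound the concentration of $|\mathcal{G}|$ is precisely the hard part, and the Azuma alternative you float does not work as stated: flipping a single coordinate $\xi_w$ from $1$ to $0$ can destroy a constant fraction of the fillings using the window $w$, so $\log|\mathcal{G}|$ has no useful bounded-differences constant. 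The second-moment route is forced, and computing the variance requires counting pairs of fillings that share a window --- again the repeat-cover counting (this is Lemma \ref{Lemma:VarPsi} and the sets $\hat{S}_{n,k}^j$). Note also that the paper averages over all periodic boundary frames via $\psi_{n,k}$ rather than first exhibiting one admissible frame and then conditioning; your two-layer scheme would additionally have to confront the dependence between the frame event and the interior-filling events, since they share $F_n$-windows.
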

In other words, the distribution of entropy for random $\Z^d$-SFTs converges in probability to a point mass at $\log^+(\alpha |\cA|)$, and this convergence is at least exponential in $n$. From the point of view of constraint satisfaction problems, Theorem \ref{Thm:EntropyIntro} may be interpreted as describing ``how many'' solutions exist, as entropy provides a natural notion of the ``size'' of the solution space. 

Observe that Theorems \ref{Thm:EmptinessIntro} and \ref{Thm:EntropyIntro} hold for any $d \geq 1$. Thus, from the perspective of emptiness 
and entropy, it appears that random $\Z^d$-SFTs behave similarly to random $\Z$-SFTs, despite the fact that the class  of $\Z^d$-SFTs for $d > 1$ exhibits strikingly different behavior than the class of $\Z$-SFTs. 

The following two theorems further address the possible differences between $\Z$-SFTs and $\Z^d$-SFTs. Recall that any nonempty $\Z$-SFT contains a finite orbit, but for each $d>1$, there exist nonempty $\Z^d$-SFTs that contain no finite orbits. The following theorem shows that the probability that a random $\Z^d$-SFT is nonempty but contains no finite orbit tends to zero as $n$ tends to infinity.
\begin{thm} \label{Thm:ExoticBehavior}
Let $\mathcal{A}$ be a finite set and $d$ be in $\N$. For each $n$, let $\mathcal{G}_n \subset \Omega_n$ be the event that $X_{\omega}$ is nonempty but contains no finite orbits. Then for each $\alpha$ in $[0,1]$,
\begin{equation*}
\lim_n \PP( \mathcal{G}_n ) = 0.
\end{equation*}
Furthermore, for $\alpha \neq |\cA|^{-1}$, the rate of convergence to this limit is at least exponential in $n$.
\end{thm}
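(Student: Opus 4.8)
The plan is to compare $\mathcal{G}_n$ with the emptiness event $\mathcal{E}_n$ of Theorem~\ref{Thm:EmptinessIntro}. Let $\mathcal{F}_n\subseteq\Omega_n$ be the event that $X_\omega$ contains a finite orbit. An empty SFT contains no finite orbit, and an SFT containing a finite orbit is nonempty, so $\mathcal{E}_n\subseteq\mathcal{F}_n^c$ and $\mathcal{F}_n\subseteq\mathcal{E}_n^c$; hence $\mathcal{F}_n^c=\mathcal{E}_n\sqcup\mathcal{G}_n$, and
\[
\PP(\mathcal{G}_n)=\PP(\mathcal{F}_n^c)-\PP(\mathcal{E}_n).
\]
Since Theorem~\ref{Thm:EmptinessIntro} already pins down $\PP(\mathcal{E}_n)$ and its rate, and the lower bound $\PP(\mathcal{F}_n^c)\ge\PP(\mathcal{E}_n)$ is automatic, everything reduces to an upper bound on $\PP(\mathcal{F}_n^c)$ that matches $\lim_n\PP(\mathcal{E}_n)$, with a comparable rate when $\alpha\ne|\cA|^{-1}$.

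The device I would use is a supply of mutually independent candidate finite orbits. For a finite orbit $\gamma$, write $S_\gamma^{(n)}=\{\,x|_{F_n+p}:x\in\gamma,\ p\in\Z^d\,\}$ for its set of realized $F_n$-patterns, so that $\{\gamma\subseteq X_\omega\}=\{S_\gamma^{(n)}\subseteq\omega\}$ has probability $\alpha^{|S_\gamma^{(n)}|}$. Since a sublattice of $\Z^d$ of index $k$ has a fundamental domain contained in a box of side at most $k$, there is a constant $C=C(d)$ so that: for $n\ge CK$, every finite orbit of cardinality at most $K$ has $|S_\gamma^{(n)}|=|\gamma|$; and for $n\ge CK^2$, any two distinct finite orbits $\gamma\ne\gamma'$ of cardinality at most $K$ have $S_\gamma^{(n)}\cap S_{\gamma'}^{(n)}=\varnothing$ — a common patch, translated to $F_n$, restricts to a fundamental domain of the intersection of their period lattices (of index at most $K^2$) and hence determines each of $\gamma,\gamma'$, forcing $\gamma=\gamma'$. (More simply, and with no constraint relating $n$ and $K$: orbits whose exact period lattice is $n\Z^d$ have pairwise disjoint $S_\gamma^{(n)}$, since $F_n$ is itself a fundamental domain of $n\Z^d$.) Therefore, for $n\ge CK^2$, the events $\{\gamma\subseteq X_\omega\}$ over orbits of cardinality at most $K$ are mutually independent, giving
\[
\PP(\mathcal{F}_n^c)\ \le\ \PP\bigl(\text{no finite orbit of cardinality }\le K\text{ lies in }X_\omega\bigr)\ =\ \prod_{|\gamma|\le K}\bigl(1-\alpha^{|\gamma|}\bigr).
\]

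Using that there are at least $\tfrac12|\cA|^{k}k^{-1}$ finite orbits of cardinality $k$, this handles convergence in all three regimes with $K=K_n\to\infty$, $K_n\le c\sqrt n$. For $\alpha>|\cA|^{-1}$, the single scale $n\Z^d$ already suffices: there are at least $\tfrac12|\cA|^{n^d}n^{-d}$ orbits of exact lattice $n\Z^d$, mutually independent, each surviving with probability $\alpha^{n^d}$, so $\PP(\mathcal{F}_n^c)\le(1-\alpha^{n^d})^{|\cA|^{n^d}/(2n^d)}\le\exp\bigl(-\tfrac12(\alpha|\cA|)^{n^d}n^{-d}\bigr)$, far below $e^{-\rho n}$; since $\mathcal{G}_n\subseteq\mathcal{F}_n^c$ this settles $\alpha>|\cA|^{-1}$ with the stated exponential rate. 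For $\alpha=|\cA|^{-1}$, $\PP(\mathcal{F}_n^c)\le\exp\bigl(-\sum_{|\gamma|\le K_n}\alpha^{|\gamma|}\bigr)\le\exp\bigl(-\tfrac12\sum_{k\le K_n}k^{-1}\bigr)\to0$, and $\PP(\mathcal{E}_n)\to0$ by Theorem~\ref{Thm:EmptinessIntro}, so $\PP(\mathcal{G}_n)\to0$ (equivalently: $\zeta_X$ has radius of convergence exactly $|\cA|^{-1}$, so $\prod_{|\gamma|\le K_n}(1-\alpha^{|\gamma|})\to\zeta_X(|\cA|^{-1})^{-1}=0$). For $\alpha<|\cA|^{-1}$, $\sum_\gamma\alpha^{|\gamma|}<\infty$, so letting $K_n\to\infty$ gives $\limsup_n\PP(\mathcal{G}_n)\le\limsup_n\bigl(\prod_{|\gamma|\le K_n}(1-\alpha^{|\gamma|})-\PP(\mathcal{E}_n)\bigr)=\zeta_X(\alpha)^{-1}-\zeta_X(\alpha)^{-1}=0$.

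The remaining issue, which I expect to be the main obstacle, is the exponential rate in the sub-critical regime $\alpha<|\cA|^{-1}$. The argument above loses it because the truncation error $\prod_{|\gamma|\le K_n}(1-\alpha^{|\gamma|})-\zeta_X(\alpha)^{-1}$ is of order $\sum_{|\gamma|>K_n}\alpha^{|\gamma|}$, hence of order $(\alpha|\cA|)^{K_n}$ up to polynomial factors, while the independence constraint forces $K_n=O(\sqrt n)$; this yields only a bound of order $e^{-\rho\sqrt n}$. To obtain decay exponential in $n$ one must estimate $\PP(\mathcal{F}_n^c)$ using finite orbits of cardinality up to a constant multiple of $n$, where full independence fails, via a Bonferroni or second-moment bound controlling the (few) pairs of short orbits that share an $F_n$-patch — essentially the refinement already needed in the proof of Theorem~\ref{Thm:EmptinessIntro}, following \cite{McGoff}. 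The reduction, the super-critical case, the threshold, and the convergence $\PP(\mathcal{G}_n)\to0$ for all $\alpha$ are, by contrast, clean.
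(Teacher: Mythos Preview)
Your reduction $\PP(\mathcal{G}_n)=\PP(\mathcal{F}_n^c)-\PP(\mathcal{E}_n)$ and the case split are exactly the paper's approach, and your treatment of $\alpha\ge|\cA|^{-1}$ is fine. The only issue is the sub-critical exponential rate, which you correctly flag as a gap---but you misdiagnose the fix.

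You derive independence of the events $\{\gamma\subseteq X_\omega\}$ only for $|\gamma|\le K$ with $K=O(\sqrt n)$, reasoning via the intersection of the two period lattices (index $\le K^2$). This is valid but wasteful. The paper's Lemma~\ref{Lemma:Independence} shows, using the Fine--Wilf theorem coordinate by coordinate, that distinct finite orbits $\gamma_1\ne\gamma_2$ with $|\gamma_i|\le n/2$ already satisfy $W_n(\gamma_1)\cap W_n(\gamma_2)=\varnothing$. The point is that if two periodic configurations agree on $F_n$ and each has period at most $n/2$ in every direction, then on each line the Fine--Wilf hypothesis is met and the periods are compatible; iterating over directions forces the configurations to coincide. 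So independence holds for $K_n=\lfloor n/2\rfloor$, not merely $K_n=O(\sqrt n)$.

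With this improvement, your own computation of the truncation error gives
\[
\prod_{|\gamma|\le n/2}\bigl(1-\alpha^{|\gamma|}\bigr)-\zeta_X(\alpha)^{-1}
\ \le\ C\sum_{k>n/2}k^{d+1}(\alpha|\cA|)^k
\ \le\ C'\beta^n
\]
for some $\beta\in(0,1)$, which combined with the exponential lower bound $\PP(\mathcal{E}_n)\ge\zeta_X(\alpha)^{-1}(1-C\beta^n)$ from the proof of Theorem~\ref{Thm:EmptinessIntro} closes the gap. No Bonferroni or second-moment correction is needed; the events really are independent up to the scale you want.
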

Note that the undecidability result of Berger \cite{Berger1966}, along with many other constructions of exotic or pathological examples, relies on constructing $\Z^2$-SFTs in $\mathcal{G}_n$ for some $n$. Here we observe the heuristic phenomenon, observed in other random constraint satisfaction problems, that for $d >1$, exotic or pathological behavior is possible ``in the worst case,'' but such behavior is not typical. 

We now consider the periodic entropy of $\Z^d$-SFTs. For a $\Z^d$-SFT $X$, let $\mathcal{P}$ be the set of points $x$ in $X$ such that $x$ is contained in a finite orbit. Then the periodic entropy of $X$ is defined as
\begin{equation*}
h_{\text{per}}(X) = \limsup_{k \to \infty} \frac{1}{k^d} \log | \{ x|_{F_k} : x \in \mathcal{P} \} |.
\end{equation*}
The following theorem gives the limiting distribution of periodic entropy for random $\Z^d$-SFTs.
\begin{thm} \label{Thm:PeriodicEntropyIntro}
 Let $\mathcal{A}$ be a finite set and $d$ be in $\N$. For each $\alpha \in [0,1]$ and $\epsilon >0$, there exist $\rho>0$ and $n_0$ such that if $n \geq n_0$, then
\begin{equation*}
 \PP \biggl( \Bigl| h_{\text{per}}(X_{\omega}) - \log^+(\alpha |\cA|) \Bigr| \geq \epsilon \biggr) \leq \exp\Bigl(-\rho n^d\Bigr).
\end{equation*}
\end{thm}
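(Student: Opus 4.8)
The upper bound and the regime $\alpha|\cA|\le 1$ are immediate from Theorem~\ref{Thm:EntropyIntro}. Since $\mathcal{P}\subseteq X_\omega$ one has $h_{\text{per}}(X_\omega)\le h(X_\omega)$ always (with the conventions, also in force for Theorem~\ref{Thm:EntropyIntro}, that $h(\varnothing)=0$ and that $h_{\text{per}}(X)=0$ when $X$ contains no finite orbit), so $\PP(h_{\text{per}}(X_\omega)\ge\log^+(\alpha|\cA|)+\epsilon)\le\PP(h(X_\omega)\ge\log^+(\alpha|\cA|)+\epsilon)\le\exp(-\rho n^d)$; and when $\alpha|\cA|\le 1$ one has $h_{\text{per}}(X_\omega)\ge 0=\log^+(\alpha|\cA|)$ unconditionally, so only $h_{\text{per}}(X_\omega)\le\epsilon$ is needed, which is the previous line. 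It remains to prove, for $\alpha>|\cA|^{-1}$,
\[
 \PP\bigl(h_{\text{per}}(X_\omega)<\log(\alpha|\cA|)-\epsilon\bigr)\le\exp(-\rho n^d).
\]

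The plan is to exhibit, on a high‑probability event, a single ``tile design'' that can be laid down over tori of every sufficiently large side $\Lambda$ in at least $N^{(\Lambda/\ell)^d}$ admissible periodic ways, where $\ell$ is a fixed multiple of $n$ and $N$ is large; this forces $h_{\text{per}}(X_\omega)\ge\ell^{-d}\log N$. The design has two parts. First, a \emph{seed}: let $Y$ be the number of $q\in(\Z/n\Z)^d$‑periodic configurations lying in $X_\omega$, i.e. of $q\in\cA^{(\Z/n\Z)^d}$ whose periodic extension avoids all forbidden patterns. The set of $n$‑windows of such an extension is exactly the $(\Z/n\Z)^d$‑orbit of $q$, so the events $\{\bar q\in X_\omega\}$ for $q$ in distinct orbits depend on disjoint collections of patterns and are \emph{independent}; writing $Y$ as a sum of independent, $[0,n^d]$‑valued, orbit‑indexed summands with $\E Y\ge(\alpha|\cA|)^{n^d}$, Bernstein's inequality gives $\PP(Y<\tfrac12\E Y)\le\exp(-c(\alpha|\cA|)^{n^d}/n^d)$, which for $\alpha>|\cA|^{-1}$ is far smaller than $\exp(-\rho n^d)$; so with overwhelming probability $X_\omega$ contains a periodic point $z$ of period $n\Z^d$. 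Second, a \emph{free block}: fix a large integer $K$ and work on the cube $[1,\ell]^d$ with $\ell=Kn$, carrying in its $n$‑thick outer collar the content of $z$ (which, being $n$‑periodic, is consistent across face‑adjacent cells) and leaving the inner region $R$, of side $(K-2)n$, free. Let $N_z$ be the number of fillings $v$ of $R$ for which every $n$‑window of the block meeting $R$ lies in $\omega$. Since the collar is fixed, a first‑moment computation gives $\E N_z\ge|\cA|^{|R|}\alpha^{\ell^d}=(\alpha|\cA|)^{|R|}\alpha^{\ell^d-|R|}$, and as $\ell^d-|R|=O(K^{d-1}n^d)$ is negligible beside $|R|=(K-2)^dn^d$ once $K$ is large, $\E N_z\ge(\alpha|\cA|)^{(1-\epsilon/3)|R|}$.

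The step I expect to be the main obstacle is upgrading this first moment to
\[
 \PP\bigl(N_z<(\alpha|\cA|)^{(1-\epsilon/2)|R|}\bigr)\le\exp(-\rho n^d).
\]
A plain second moment does not suffice: the events ``$v$ is a good filling'' and ``$v'$ is a good filling'' are strongly correlated for the pairs $v,v'$ that agree on a large sub‑region of $R$ (they then share that many windows, and the resulting factor $\alpha^{-(\#\text{shared windows})}$ is not absorbed by the combinatorial cost of forcing the agreement once $\alpha|\cA|$ is close to $1$), while one cannot pass to a subfamily of pairwise window‑disjoint fillings without destroying the entropy, since such a subfamily has size only exponential in $n^d$ rather than in $|R|$. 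This is precisely the difficulty of counting solutions of a random constraint satisfaction problem in its satisfiable phase, and I would treat it accordingly: establish concentration of $\log N_z$ about its mean by a martingale (bounded‑differences) argument in which the heavy‑tailed effect of toggling a single pattern in or out of $\omega$ is controlled on a high‑probability regularity event — a typical such change alters $\log N_z$ by only $O(\mathrm{poly}(n)\,|\cA|^{-n^d})$ — and locate the mean via a Paley–Zygmund‑type lower bound holding with constant probability. This is the same estimate that underlies the lower bound in Theorem~\ref{Thm:EntropyIntro} (which is why the two theorems are proved together), the only new feature here being that the block gets tiled periodically rather than freely concatenated.

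Granting the bound on $N_z$, the conclusion is routine. On the good event, for each $m\ge 1$ put $\Lambda=m\ell$ and tile the torus $(\Z/\Lambda\Z)^d$ by $m^d$ face‑adjacent translates of the block above, filling every collar with the content of $z$; since all lengths are multiples of $n=\operatorname{period}(z)$ the collars and their corners carry $z$ consistently, and since each collar has width $n$ no $n$‑window straddles two distinct free regions. Any assignment of a good filling to each of the $m^d$ free regions produces a configuration all of whose $n$‑windows lie in $\omega$ — hence a point of $X_\omega$ of period $\Lambda\Z^d$ — and distinct assignments yield points with distinct $F_\Lambda$‑restrictions. Therefore $|\{y|_{F_\Lambda}:y\in\mathcal{P}\}|\ge N_z^{m^d}$ for every $m$, so, letting $\Lambda\to\infty$,
\[
 h_{\text{per}}(X_\omega)\ \ge\ \frac{\log N_z}{\ell^d}\ \ge\ (1-\epsilon/2)\,\log(\alpha|\cA|)\,\frac{(K-2)^d}{K^d}\ \ge\ \log(\alpha|\cA|)-\epsilon,
\]
the last inequality holding once $K$ is chosen large in terms of $\epsilon,\alpha,|\cA|,d$. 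Combined with the upper bound and the case $\alpha|\cA|\le 1$, this proves the theorem.
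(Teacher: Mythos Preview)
Your reduction of the upper bound and the regime $\alpha|\cA|\le 1$ to Theorem~\ref{Thm:EntropyIntro} via $h_{\text{per}}\le h$ is fine and matches the paper. The gap is in the lower bound for $\alpha>|\cA|^{-1}$: you explicitly flag the concentration of $N_z$ as ``the main obstacle,'' concede that a second moment fails for it, and only outline a martingale strategy without executing it. As written this is not a proof.

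The paper avoids precisely this difficulty by a different choice of random variable. Rather than fix a seed $z$ first and then count fillings with that boundary, it sets
\[
\psi_{n,k}=\frac{1}{|V_{n,k}|}\sum_{b\in P_{n,k}}\xi_b,
\]
the \emph{average} number of allowed fillings over \emph{all} periodic boundaries $b\in V_{n,k}$ (with $k=nf(n)$, $f(n)\to\infty$, $f(n)=o((n/\log n)^{1/d})$). Lemma~\ref{Lemma:LowerBdLemma} observes that since the maximum over boundaries is at least the average, $\tfrac{1}{k^d}\log\psi_{n,k}\le h_{\text{per}}(X_\omega)$; so concentration of $\psi_{n,k}$ suffices. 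Crucially, a \emph{plain second moment does suffice} for $\psi_{n,k}$: Lemmas~\ref{Lemma:EPsi} and~\ref{Lemma:VarPsi} give $\EE(\psi_{n,k})^{1/k^d}\to\alpha|\cA|$ and $\Var(\psi_{n,k})/\EE(\psi_{n,k})^2\le K\exp(-\rho n^d)$, after which Chebyshev finishes. The variance bound works because pairs $(u,v)\in P_{n,k}\times P_{n,k}$ with $W_n(u)\cap W_n(v)\neq\varnothing$ are counted via repeat covers (Lemmas~\ref{Lemma:DefineAWord} and~\ref{Lemma:GeneralCoding}), and the periodic boundary means each such pair is determined by data on $F_\ell\sqcup F_\ell'$ rather than $F_k\sqcup F_k'$ --- this is what makes the combinatorics close. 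By averaging before concentrating, the paper sidesteps the need to first locate a good $z$ and then run a conditional argument; your two-stage plan is not wrong in spirit, but it is both harder and, as you yourself note, not completed here.
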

For any $\Z$-SFT $X$, we have that $h_{\text{per}}(X) = h(X)$, but for $\Z^d$-SFTs with $d>1$, this equality need not hold. Nonetheless, Theorems \ref{Thm:EntropyIntro} and \ref{Thm:PeriodicEntropyIntro} show that for large $n$, in a typical $\Z^d$-SFT defined by forbidding blocks of size $n$, the growth rate of the number finite orbits is about the same as the growth rate of the number of all orbits, which once again suggests that typical $\Z^d$-SFTs are not dramatically different from typical $\Z$-SFTs.

The paper is organized as follows. In Section \ref{Sect:Preliminaries} we present the necessary notation and define random $\Z^d$-SFTs in detail. Section \ref{Sect:Emptiness} deals with emptiness and finite orbits and contains the proofs of Theorems \ref{Thm:EmptinessIntro} and \ref{Thm:ExoticBehavior}. That section is the longest and most difficult of the paper, reflecting the difficulty of studying emptiness for $\Z^d$-SFTs with $d>1$. Indeed, our proofs in Section \ref{Sect:Emptiness} employ fundamentally different methods from those used in \cite{McGoff} for the case $d=1$. Section \ref{Sect:Entropy} details our considerations related to entropy, as well as the proofs of Theorems \ref{Thm:EntropyIntro} and \ref{Thm:PeriodicEntropyIntro}. These proofs use a second moment argument similar in structure to the one given in \cite{McGoff} for $d=1$, although the proofs here differ from the previous proofs in the details. Finally, we conclude the paper with some brief remarks and open questions in Section \ref{Sect:Discussion}.


\section{Preliminaries} \label{Sect:Preliminaries}

In this section, we give a precise description of the objects under consideration. 

\subsection{Basic notation} \label{Sect:Notation}

\subsubsection{General subsets of $\Z^d$}
Let $d$ be a natural number. First, we set some notation regarding subsets of $\Z^d$. Let $\{e_i\}_{i = 1}^d$ denote the standard basis in $\Z^d$. We use interval notation to denote subsets of $\Z$, \textit{e.g.}, $[1,4] = \{1,2,3,4\}$.  For a subset $E$ of $\Z^d$ and a vector $v$ in $\Z^d$, let $E+v = \{u+v : u \in E\}$. Also, let $\1 = (1,\dots,1) \in \Z^d$. We use the $\ell_{\infty}$ metric: for $u,v$ in $\R^d$, let $\rho(u,v) = \max \{ |u_i - v_i|: 1 \leq i \leq d\}$. 
For $x$ in $\R^d$ and $r \geq 0$, we let $B(x,r)$ denote the closed ball centered at $x$ of radius $r$ in $\Z^d$. 
We will have use for the inner boundary of a set: for $E \subset \Z^d$ and $r >0$, define
\begin{align*}
  \partial_r E & = \{ x \in E : \exists y \notin E, \, \rho(x,y) \leq r\}.
 \end{align*}
We denote the $r$-interior of a set $E$ by
\begin{equation*}
 \interior_r(E) = \{ x \in E : B(x,r) \subset E \},  
\end{equation*}
and we denote the $r$-thickening of $E$ by
\begin{equation*}
 B(E,r) = \bigcup_{x \in E} B(x,r). 
\end{equation*}
Let ``$<$'' denote the lexicographic ordering on $\Z^d$: for $p \neq q$ in $\Z^d$, let $i = \min\{ j : p_j \neq q_j \}$, and define $p < q$ whenever $p_i < q_i$. Also, for each $i$ in $[1,d]$ and $v \subset \Z^d$, let $\pi_i(v)$ denote the projection of $v$ to the hyperplane passing through $\1$ perpendicular to $e_i$, \textit{i.e.}, $(\pi_i(v))_i = 1$ and $(\pi_i(v))_j = v_j$ for $j\neq i$.

\subsubsection{Rectangles and hypercubes} \label{Sect:Rectangles}
We also require some definitions regarding the geometry of rectangles and hypercubes in $\Z^d$. For $n$ in $\N$, let $F_n$ be the hypercube with side length $n$ in $\Z^d$, \textit{i.e.}, $F_n = [1,n]^d \subset \Z^d$. Let $k$ be in $\N$. For each $\mathcal{I} \subset [1,d]$ and $s : \mathcal{I} \to \{1,k\}$, define
\begin{equation*}
 F_k(\mathcal{I},s) = \{ (x_1,\dots,x_d) \in F_k : \forall i \in \mathcal{I}, \, x_i = s(i) \}.
\end{equation*}
A set of the form $F_k(\mathcal{I},s)$ is called a \textit{face} of $F_k$, and we define the \textit{dimension} of $F_k(\mathcal{I},s)$ to be $\dim(F_k(\mathcal{I},s)) = d-|\mathcal{I}|$.  For example, if $\mathcal{I} = \varnothing$ (and $s : \varnothing \to \{1,k\}$ is the empty map), then $F_k(\varnothing,s) = F_k$, which has dimension $d$. 
For $0 \leq \ell \leq d$, define $F_{k,\ell}$ to be the \textit{$\ell$-skeleton} of $F_k$:
\begin{equation*}
 F_{k,\ell} = \bigcup_{\substack{E \text{ face of } F_k \\ \dim(E) = \ell}} E.
\end{equation*}
Note that the number of faces of dimension $\ell$ is $2^{d-\ell} \binom{d}{\ell}$, which we denote by $c_{d,\ell}$. 

For a face $E = F_k(\mathcal{I},s)$, we also require the ``$n$-thickened interior'' of $E$:
\begin{equation*} 
\begin{split}
  T_n(E) = \biggl\{ p \in F_k : \,  & \forall i \in \mathcal{I}, \, |s(i) - p_i| \leq n, \text{ and } \\
                         & \forall i \notin \mathcal{I}, \, p_i \in [n+1,k-n] \biggr\}.
\end{split}
\end{equation*}
A face $E$ has some restricted coordinates (indexed by $\mathcal{I}$) and some free coordinates (those not in $\mathcal{I}$). In this sense, $T_n(E)$ consists of the $n$-thickening of $E$ in the restricted directions and the $n$-interior of $E$ in the free directions (see Figure \ref{TnEPartition}). For $k > 2n$, the collection of sets $T_n(E)$, where $E$ ranges over all faces of $F_k$, is a partition of $F_k$ (see Figure \ref{TnEPartition}). Furthermore, note that for any $\ell \in [0,d]$, we have
\begin{equation} \label{Eqn:ellSkeletonDecomp}
 F_k = \biggl( F_k \cap B(F_{k,\ell},n) \biggr) \cup \Biggl( \bigsqcup_{\substack{E \text{ face of } F_k \\ \dim(E) > \ell }} T_n(E) \Biggr).
\end{equation}

\begin{figure}
 \centering
\includegraphics[scale=.5]{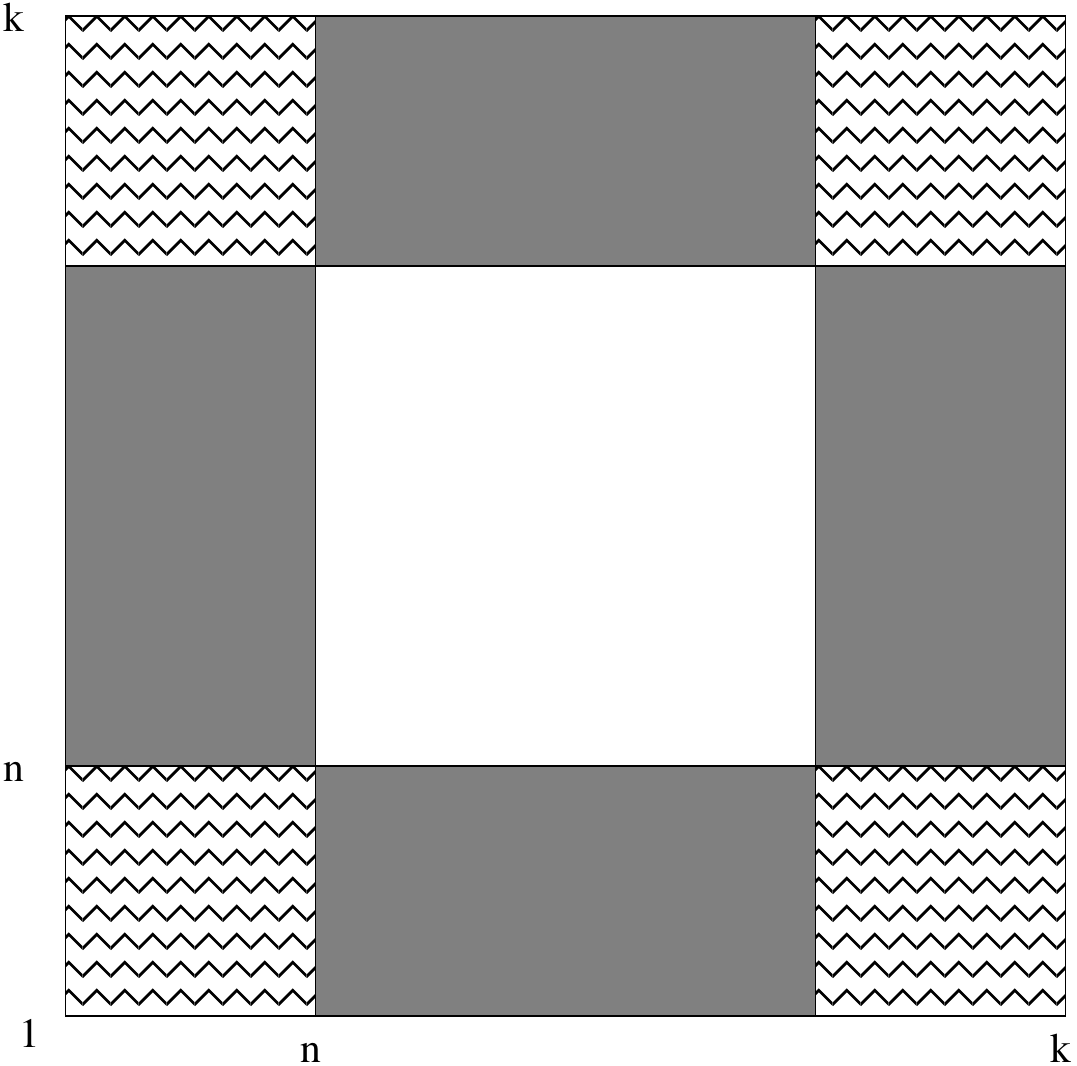}
\caption{Partition of $F_k$ by the sets $T_n(E)$, where $E$ runs over all faces. For each corner $E$ (\textit{i.e.}, face of dimension $0$), $T_n(E)$ is shaded with broken lines. For each edge $E$ (\textit{i.e.} face of dimension $1$), $T_n(E)$ is shaded in gray. For the square $F_k$ (\textit{i.e.}, the only face of dimension $2$), $T_n(F_k)$ is the unshaded central region.}
\label{TnEPartition}
\end{figure}


\subsubsection{Patterns, repeats, and repeat covers} \label{Sect:Repeats}

Let us establish some basic terminology. 
A \textit{configuration} is an element of $\cA^{\Z^d}$. For a finite set $E \subset \Z^d$ and a finite set $\cA$, a \textit{pattern} on $E$ is an element of $\cA^E$. If $u$ is a pattern on $E$, then we say that $u$ has shape $E$. A pattern $u$ on $E$ may also be referred to as an $E$-pattern or a pattern with shape $E$. For ease of notation, we consider patterns to be defined only up to translation, as follows. If $F = E + v$ for some $v$ in $\Z^d$ and $u$ is in $\cA^{S}$ where $E \cup F \subset S$, then we write $u|_F = u|_E$ to denote the statement that $u_{t+v} = u_t$ for all $t$ in $E$.

For finite $E \subset \Z^d$, let $\cC_n(E)$ denote the set of $n$-cubes contained in $E$, \textit{i.e.},
\begin{equation*}
 \cC_{n}(E) = \{ F_n + v : v \in \Z^d, \, F_n + v \subset E \}.
\end{equation*}
For a finite set $S \subset \Z^d$, let $m(S)$ be the lexicographically minimal element of $S$. Note that for an $n$-cube $S$, we have the relation $S = F_n+m(S)-\1$. 

In the following, we will work with sets of pairs of $n$-cubes; that is, we will work with sets $J \subset \cC_n(E) \times \cC_n(E)$ for various choices of $E \subset \Z^d$. For such a set $J$, let $|J|$ denote the number of pairs in $J$, and let $\rArea{J} \subset E$ be defined as follows:
\begin{equation*}
 \rArea{J} = \bigcup_{(S_1,S_2) \in J} S_2.
\end{equation*}

We will seek to understand the structure of ``repeated sub-patterns'' within patterns on finite subsets of $\Z^d$. We make this notion precise with the following definition, which plays an important role in Section \ref{Sect:Emptiness}. 

\begin{defn}
Let $\cA$ be a finite set. Let $E \subset \Z^d$, and let $u$ be in $\cA^{E}$. A pair $(S_1,S_2)$ in $\cC_n(E) \times \cC_n(E)$ is an $n$-\textit{repeat} in $u$ if $S_1 \neq S_2$, $u|_{S_1} = u|_{S_2}$, and $S_1$ is the lexicographically minimal appearance of the pattern $u|_{S_1}$ in $u$.
If $(S_1,S_2)$ is an $n$-repeat in $u$, then the pattern $w = u|_{S_1} = u|_{S_2}$ is called a \textit{repeated $F_n$-pattern}. A set $J \subset \cC_n(E) \times \cC_n(E)$ is an $n$-\textit{repeat cover} for $u$ if
\begin{enumerate}
\item each $(S_1,S_2) \in J$ is an $n$-repeat in $u$, and
\item if $(S_1,S_2)$ is an $n$-repeat in $u$, then $S_2 \subset \rArea{J}$.
\end{enumerate}
When $n$ is clear from context, we drop the prefix $n$ from this terminology.
\end{defn}
Let us make two remarks about this definition. First, if $J$ is a repeat cover for $u$, then the set $\rArea{J}$ depends only on $u$ and not on the particular choice of repeat cover $J$. Second, repeat covers always exist, since the set of all pairs in $\cC_n(E) \times \cC_n(E)$ satisfying property (1) gives an $n$-repeat cover. However, we will often seek ``more efficient'' repeat covers, in the sense of including fewer repeats (\textit{i.e.}, minimizing $|J|$). This pursuit is taken up in Section \ref{Sect:RepeatCovers}.

\subsection{Symbolic dynamical systems}
Now we introduce symbolic dynamical systems and $\Z^d$-SFTs. Let $\mathcal{A}$ be a finite set (alphabet).  For any set $Y$ contained in $\mathcal{A}^{\Z^d}$, let $W_n(Y)$ be the set of $F_n$-patterns seen in $Y$:
\begin{align*}
W_n(Y) & = \{ u \in \mathcal{A}^{F_n} : \exists x \in Y, \, \exists v \in \Z^d, \; u = x|_{F_n+v} \}.
\end{align*}
Endow $\mathcal{A}^{\Z^d}$ with the product topology induced by the discrete topology on $\mathcal{A}$, and let $\sigma : \mathcal{A}^{\Z^d} \to \mathcal{A}^{\Z^d}$ denote the shift action, defined for each $x$ in $\mathcal{A}^{\Z^d}$ and $p,q$ in $\Z^d$ by
\begin{equation*}
\bigl(\sigma^p(x)\bigr)_q = x_{p+q}.
\end{equation*}
Note that $\mathcal{A}^{\Z^d}$ is a compact, metrizable space, and $\sigma^p$ is a homeomorphism for each $p$ in $\Z^d$. A set $Y$ contained in $\mathcal{A}^{\Z^d}$ is a $\Z^d$-\textit{subshift} if $Y$ is closed and shift-invariant (\textit{i.e.}, $\sigma^p(Y) = Y$ for all $p$ in $\Z^d$). A set $X$ contained in $\mathcal{A}^{\Z^d}$ is a $\Z^d$-\textit{shift of finite type} ($\Z^d$-SFT) if there exist a natural number $n$ and a set $\mathcal{F} \subset \mathcal{A}^{F_n}$ such that
\begin{equation*}
X = \bigl\{ x \in \mathcal{A}^{\Z^d} : \forall p \in \Z^d, \, \sigma^p(x)|_{F_n} \notin \mathcal{F} \bigr\}.
\end{equation*}
Any $\Z^d$-SFT is a $\Z^d$-subshift, but the converse is false.

For $S \subset \Z^d$ and a pattern $u \in \mathcal{A}^{S}$, define $W_n(u)$ to be the set of $F_n$-patterns that appear in $u$:
\begin{equation*}
W_n(u) = \{ v \in \mathcal{A}^{F_n} : \exists S' \in \cC_n(S), \, u|_{S'} = v\}.
\end{equation*}
For a fixed alphabet $\mathcal{A}$, if $n \leq k$ and $j \in [1, \, (k-n+1)^d]$, then we define $N_{n,k}^j$ to be the number of patterns with shape $F_k$ containing exactly $j$ distinct $F_n$-patterns:
\begin{equation} \label{Eqn:Nnkj}
N_{n,k}^j = \{u \in \cA^{F_k} : |W_n(u)|=j\}.
\end{equation}

Let $X$ be a $\Z^d$-subshift. A \textit{finite orbit} in $X$ is a nonempty finite set $\gamma \subset X$ such that for each $x$ in $\gamma$, it holds that $\{\sigma^p(x) : p \in \Z^d\} = \gamma$. A point $x$ in $X$ is a \textit{totally periodic point} if $x$ is contained in a finite orbit. Let $\{\gamma_i\}_i$ be an enumeration of the countable set of finite orbits contained in $X$ such that if $i < j$ then $|\gamma_i|\leq |\gamma_j|$. We formally define a zeta function for $X$ as follows:
\begin{equation*}
\zeta_X(t) = \prod_i \bigl(1-t^{|\gamma_i|} \bigr)^{-1}.
\end{equation*}
Simple computations show that if $X = \cA^{\Z^d}$, then the radius of convergence of $\zeta_X(t)$ is $|\cA|^{-1}$ and $\zeta_X(t)$ diverges to infinity at $t = |\cA|^{-1}$.

\subsection{Random $\Z^d$-SFTs} \label{Sect:StochasticModel}

Consider a fixed alphabet $\cA$ and a natural number $d$. Let $X = \cA^{\Z^d}$. For each $\alpha \in [0,1]$ and $n$ in $\N$, we make the following definitions. Set $\Omega_n = \{0,1\}^{W_n(X)}$. For each $u$ in $W_n(X)$, let $\xi_u : \Omega_n \to \{0,1\}$ be the projection onto the $u$-coordinate. Let $\PP$ be the product measure on $\Omega_n$ defined for each $u$ in $W_n(X)$ by
\begin{equation*}
\PP( \xi_u = 1 ) = \alpha.
\end{equation*}

We view $\PP$ as a probability measure on the $\Z^d$-SFTs contained in $\mathcal{A}^{\Z^d}$, which we explain as follows. For $n \geq 1$ and $\omega$ in $\Omega_n$, let
\begin{align*}
\mathcal{F}(\omega) & = \{ u \in W_n(X) : \xi_u(\omega) = 0 \}.
\end{align*}
Then for each $\omega$ in $\Omega_n$, let $X_{\omega}$ be the $\Z^d$-SFT built from $\omega$ by forbidding the patterns in $\mathcal{F}(\omega)$:
\begin{align*}
X_{\omega} & = \{ x \in X : \forall p \in \Z^d, \, \sigma^p(x)|_{F_n} \notin \mathcal{F}(\omega) \} .
\end{align*}
This definition is equivalent to the description given in the introduction.

Let $K_n(X)$ denote the finite set of $\Z^d$-SFTs contained in $X$ that may be defined by forbidding only patterns in $W_n(X)$. We include the empty $\Z^d$-SFT as an element of $K_n(X)$ for all $n$. Then the map $\omega \mapsto X_{\omega}$ is a surjection from $\Omega_n$ onto $K_n(X)$. Therefore $\PP$ projects to a probability measure on $K_n(X)$, and it is in this sense that we refer to $X_{\omega}$ as a \textit{random} $\Z^d$-SFT. 

\subsection{Two simple combinatorial lemmas}

We will use the following two elementary lemmas, whose proofs are included for completeness.

\begin{lemma} \label{Lemma:CombLemma}
 Suppose $S$ is a finite set and $\{U_i: i \in \mathcal{I}\}$ is a collection of subsets of $S$ such that for each $s$ in $S$,
\begin{equation*}
 |\{ i \in \mathcal{I} : s \in U_i \} | \leq c.
\end{equation*}
Then
\begin{equation*}
 \sum_{i \in \mathcal{I}} |U_i| \leq c |S|.
\end{equation*}
\end{lemma}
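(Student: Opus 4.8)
The plan is a straightforward double-counting argument on the set of incidences
\[
P = \{ (i,s) \in \mathcal{I} \times S : s \in U_i \}.
\]
First I would count $|P|$ by summing over $i \in \mathcal{I}$: for each fixed $i$, the number of $s$ with $(i,s) \in P$ is exactly $|U_i|$, so $|P| = \sum_{i \in \mathcal{I}} |U_i|$. Next I would count $|P|$ by summing over $s \in S$ instead: for each fixed $s$, the number of $i$ with $(i,s) \in P$ is $|\{ i \in \mathcal{I} : s \in U_i \}|$, which by hypothesis is at most $c$; hence $|P| = \sum_{s \in S} |\{ i \in \mathcal{I} : s \in U_i \}| \leq \sum_{s \in S} c = c|S|$. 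Combining the two computations gives $\sum_{i \in \mathcal{I}} |U_i| = |P| \leq c|S|$, as desired.

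There is essentially no obstacle here; the only minor point to keep in mind is that $\mathcal{I}$ is not assumed finite a priori, but since $S$ is finite and each nonempty $U_i$ contributes at least one incidence while total incidences are bounded by $c|S|$, only finitely many $U_i$ are nonempty, so all sums involved are well-defined finite sums (one could alternatively just allow the sum $\sum_{i \in \mathcal{I}} |U_i|$ to be interpreted as a supremum of finite partial sums, and the bound still holds). I would include a one-line remark to this effect to keep the argument rigorous.
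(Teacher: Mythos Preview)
Your proposal is correct and essentially identical to the paper's proof: the paper also defines the incidence set $\tilde{S} = \{(i,s) \in \mathcal{I} \times S : s \in U_i\}$ and double-counts it in exactly the same way. Your extra remark about the possible infinitude of $\mathcal{I}$ is a nice touch of rigor that the paper omits.
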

\begin{proof}
 Let $\tilde{S} = \{(i,s) \in \mathcal{I} \times S : s \in U_i \}$. Then we have
\begin{align*}
 \sum_{i \in \mathcal{I}} |U_i| & = |\tilde{S}|  = \sum_{s \in S} |\{(i,s) : s \in U_i \}|  \leq c |S|.
\end{align*}
\end{proof}


\begin{lemma} \label{Lemma:IntervalCoveringLemma}
Let $\{L_i\}_{i \in \mathcal{I}}$ be a finite set of intervals in $\Z$. Then there exists $\mathcal{I}' \subset \mathcal{I}$ such that 
\begin{equation*}
\bigcup_{i \in \mathcal{I}} L_i = \bigcup_{i \in \mathcal{I}'} L_i, 
\end{equation*}
and for each $x$ in $\Z$, it holds that
\begin{equation*}
|\{ i \in \mathcal{I}' : x \in L_i \}| \leq 2.
\end{equation*}
\end{lemma}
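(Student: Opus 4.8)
The plan is to prove Lemma \ref{Lemma:IntervalCoveringLemma} by a greedy sweep from left to right, discarding intervals that are subsumed and keeping only those that extend the coverage. Here is the approach in detail.

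First I would reduce to the case where no interval $L_i$ is contained in another: if $L_i \subset L_j$ for some $i \neq j$, we may delete $L_i$ from $\mathcal{I}$ without changing the union, so after finitely many such deletions we obtain a subfamily indexed by $\mathcal{I}_0 \subset \mathcal{I}$ with the same union and no containments among its members. (Here we should be slightly careful about equal intervals, treating one arbitrarily as contained in the other; this is harmless.) On such a family, write each interval as $L_i = [a_i, b_i]$. Since no interval contains another, the map $i \mapsto a_i$ is injective on $\mathcal{I}_0$, and moreover $a_i < a_j$ implies $b_i < b_j$ (otherwise $L_j \subset L_i$). Thus we may list the intervals of $\mathcal{I}_0$ in a single order in which both left endpoints and right endpoints are strictly increasing; relabel them $L_1, L_2, \dots, L_m$ accordingly.

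Next I would show that with this labeling, for any $x \in \Z$ the set $\{ i : x \in L_i \}$ is an interval of consecutive indices, hence of the form $\{i : a_i \le x \le b_i\}$. Because the $a_i$ are increasing and the $b_i$ are increasing, the condition $a_i \le x$ is downward closed in $i$ is false — rather $a_i \le x$ holds for an initial segment of indices and $b_i \ge x$ holds for a terminal segment, so $x \in L_i$ holds precisely for a contiguous block of indices. If that block has length at most $2$ for every $x$ we are already done with $\mathcal{I}' = \mathcal{I}_0$; in general it may be longer, so I would further thin the family. The key observation is that if three consecutive intervals $L_{i-1}, L_i, L_{i+1}$ all contain a common point, then since $a_{i-1} < a_i < a_{i+1}$ and $b_{i-1} < b_i < b_{i+1}$, the middle interval satisfies $L_i \subset L_{i-1} \cup L_{i+1}$ (its left part is covered by $L_{i-1}$ up through $b_{i-1} \ge a_{i+1} - 1$... more precisely $b_{i-1} \geq a_{i+1}$ when the three share a point, wait — need $a_{i+1} \le b_{i-1}$, which follows since some $x$ lies in all three). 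So I can delete such a middle interval; repeating greedily, I arrive at a subfamily in which no point is covered three times, with the union preserved because every deletion only removes an interval contained in the union of its neighbors.

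The main obstacle I anticipate is making the deletion-of-a-covered-middle-interval argument rigorous as an induction or greedy procedure: one must argue that the process terminates (it does, since each step decreases $m$) and, crucially, that deleting one such interval does not create a new triple-covered point among the survivors — which is where one checks that "$L_i \subset L_{i-1} \cup L_{i+1}$" genuinely holds, i.e. that the union is unchanged at each step. Concretely, it suffices to process the intervals once in increasing order, maintaining a running list and, whenever adding $L_{i+1}$ would cause its left endpoint $a_{i+1}$ to be $\le$ the right endpoint of the interval two-before-last currently on the list, drop that two-before-last interval before appending. A short invariant argument — that after processing $L_i$, every point is covered at most twice by the current list and the union of the current list equals $\bigcup_{j \le i} L_j$ — then yields the lemma. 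Everything else is bookkeeping with endpoints of intervals in $\Z$.
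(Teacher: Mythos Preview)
Your approach is correct in spirit but contains a small slip, and it is considerably more elaborate than the paper's argument.

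The slip: in your greedy sweep you say to ``drop that two-before-last interval.'' That is the wrong one. If the last two survivors are $L_p, L_q$ (with $p<q$ in your ordering) and you are appending $L_r$, the condition $a_r \le b_p$ forces all three to contain $a_r$, and then $L_q = [a_q,b_q] \subset [a_p,b_p] \cup [a_r,b_r]$ since $a_p < a_q$, $b_r > b_q$, and $b_p \ge a_r$. So it is the \emph{middle} survivor $L_q$ that must be dropped; dropping $L_p$ could lose coverage to the left of $a_q$. With that correction, your invariant argument goes through.

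The paper's proof bypasses all of the ordering and sweep machinery. It simply iterates the rule ``if some $L_{i_1}$ is contained in $L_{i_2} \cup L_{i_3}$ for distinct $i_1,i_2,i_3$, delete $i_1$'' until no such triple exists; the process halts by finiteness and preserves the union at each step. The only thing left to check is that the terminal family has no triply covered point, and this is immediate: if $x$ lay in three distinct surviving intervals, then (exactly by your middle-interval observation) one of them would be contained in the union of the other two, contradicting termination. Your reduction to a containment-free, endpoint-ordered family makes the ``middle one is redundant'' step transparent, but the paper shows you don't need that scaffolding---the abstract deletion rule already suffices, and the three-intervals-through-a-point observation is only invoked once, at the end.
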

\begin{proof}
Let $\mathcal{I}_0 = \mathcal{I}$. Now assume for induction (on $m$) that $\mathcal{I}_m$ is defined. If there exist $i_1, i_2, i_3 \in \mathcal{I}_m$ such that $L_{i_1} \subset L_{i_2} \cup L_{i_3}$, then let $\mathcal{I}_{m+1} = \mathcal{I}_m \setminus \{i_1\}$. As $\mathcal{I}$ is finite, this process eventually halts, resulting in a set $\mathcal{I}' \subset \mathcal{I}$. Note that the union of intervals in each $\mathcal{I}_m$ is the same as the union of the intervals in $\mathcal{I}$. Also, if there exists $x$ in $\Z$ contained in three distinct intervals in $\mathcal{I}'$, then at least one of those intervals is contained in the union of the other two, which contradicts the definition of $\mathcal{I}'$. Hence $\mathcal{I}'$ has the desired properties.
\end{proof}

\section{Emptiness and finite orbits} \label{Sect:Emptiness}

In this section, we investigate the probability of the event that the random $\Z^d$-SFT is empty, an event that we refer to simply as \textit{emptiness}. As evidenced by the presence of the zeta function in Theorem \ref{Thm:EmptinessIntro}, the probability of emptiness is intimately related to the probabilities associated to allowing each of the finite orbits. For an idea about how this connection might arise, consider the basic observation that any finite orbit $\gamma$ is contained in the random $\Z^d$-SFT with probability $\alpha^{|W_n(\gamma)|}$, since $\gamma$ is in $X_{\omega}$ precisely when each of the $F_n$-patterns in $\gamma$ is allowed. If $n$ is large relative to $|\gamma|$, then $|\gamma| = |W_n(\gamma)|$. In this case, the probability of forbidding $\gamma$ is $1- \alpha^{|\gamma|}$. Now if each of the finite orbits were forbidden independently (which is most certainly \textit{not} the case), then the probability of forbidding all the finite orbits would be the infinite product $ \displaystyle \prod_i (1-\alpha^{|\gamma_i|}) =  \zeta_X(\alpha)^{-1}$. Furthermore, if emptiness were equivalent to forbidding all the finite orbits (which is also not the case), then we would obtain that the probability of emptiness equals $\zeta_X(\alpha)^{-1}$. The proof of Theorem \ref{Thm:EmptinessIntro} involves showing that despite the fact that above heuristics are not true, the result still holds asymptotically as $n$ tends to infinity.

Let us give a brief outline of this section. To find the limiting behavior of the probability of emptiness, we find upper and lower bounds on this probability for finite $n$ and then deduce the limiting behavior from these bounds. The upper bound, which is not difficult, appears in Section \ref{Sect:EmptinessUB}. Note that the upper bound alone implies Theorem \ref{Thm:EmptinessIntro} in the case $\alpha \geq |\cA|^{-1}$, since the upper bound tends to zero in that regime. The most difficult part of the paper involves finding an appropriate lower bound for the probability of emptiness in the sub-critical regime, $\alpha < |\cA|^{-1}$. Towards that end, we investigate the behavior of finite orbits in Section \ref{Sect:PeriodicBehavior}. In Sections \ref{Sect:BasicsOfRepeats} and \ref{Sect:RepeatCovers}, we prove several lemmas that help us show that the dominant contribution to the probability of emptiness comes from the finite orbits; that is, if the finite orbits are forbidden from the random $\Z^d$-SFT, then with high probability, all other orbits are forbidden as well. Finally, we put these pieces together in Section \ref{Sect:EmptinessProof} and prove Theorems \ref{Thm:EmptinessIntro} and \ref{Thm:ExoticBehavior}.

\subsection{Upper bound on probability of emptiness} \label{Sect:EmptinessUB}

The purpose of this section is to present Proposition \ref{Prop:UB}, which gives upper bounds on the probability of emptiness. For $X = \cA^{\Z^d}$, by \cite[Remark 6.3]{McGoff}, the following upper bound holds, with $R_{per}$ as the radius of convergence of $\zeta_X$ and $\mathcal{E}_n$ as the event $X_{\omega} = \varnothing$:
\begin{equation*} 
\limsup_n \PP(\mathcal{E}_n) \leq \left\{ \begin{array}{ll} 
                                                      \zeta_X(\alpha)^{-1}, & \text{ if } \alpha \in [0,R_{per}) \\
																											0, & \text{ if } \alpha \in [R_{per},1].
                                                    \end{array}
																						 \right.
\end{equation*}
This bound is sufficient to give the correct upper bound on the limit in Theorem \ref{Thm:EmptinessIntro}, but in order to establish a rate of convergence of $\PP(\mathcal{E}_n)$ to its limit, we use the more detailed estimates in Proposition \ref{Prop:UB}.

Before we prove Proposition \ref{Prop:UB}, we require the following lemma, which gives a condition under which two distinct finite orbits have disjoint sets of $F_n$-patterns. The importance of this lemma is that if a collection of orbits has disjoint sets of $F_n$-patterns, then these orbits behave independently with respect to $\PP$.
\begin{lemma} \label{Lemma:Independence}
 Suppose $\gamma_1 \neq \gamma_2$ are finite orbits such that $|\gamma_i| \leq n/2$ for $i = 1,2$. Then  $W_n(\gamma_1) \cap W_n(\gamma_2) = \varnothing$. 
\end{lemma}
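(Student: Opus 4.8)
The plan is to argue by contradiction: suppose there is a pattern $w \in \cA^{F_n}$ lying in $W_n(\gamma_1) \cap W_n(\gamma_2)$, and derive that $\gamma_1 = \gamma_2$, contradicting the hypothesis. Fix $x^{(i)} \in \gamma_i$ and translation vectors so that $x^{(1)}|_{F_n} = w = x^{(2)}|_{F_n}$ (after translating, which is harmless since orbits are shift-invariant and patterns are considered up to translation). So both configurations agree on the cube $F_n = [1,n]^d$.

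The key point is that each $x^{(i)}$ is totally periodic with all periods at most $|\gamma_i| \le n/2$. More precisely, since $\gamma_i$ is a finite orbit of size $|\gamma_i|$, the stabilizer of $x^{(i)}$ in $\Z^d$ is a finite-index subgroup $\Lambda_i \le \Z^d$ with $[\Z^d : \Lambda_i] = |\gamma_i|$, and $\Lambda_i$ contains a vector of the form $k_i e_j$ with $1 \le k_i \le |\gamma_i| \le n/2$ in each coordinate direction $e_j$ (because the projection of $\Z^d/\Lambda_i$ onto the $e_j$-axis direction gives a cyclic quotient of order dividing $|\gamma_i|$, forcing some multiple $k_i e_j$ with $k_i \le |\gamma_i|$ into $\Lambda_i$). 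Hence $x^{(i)}$ is invariant under translation by $k_i e_j$ for each $j$, with $k_i \le n/2$. The strategy is then to propagate the agreement of $x^{(1)}$ and $x^{(2)}$ from $F_n$ to all of $\Z^d$ using these small periods: since $k_1, k_2 \le n/2$, the "fundamental domain" translates $k_i e_j$ stay inside $F_n$ well enough to let us march the equality $x^{(1)}_p = x^{(2)}_p$ outward one lattice step at a time. Concretely, to show $x^{(1)}_p = x^{(2)}_p$ for arbitrary $p \in \Z^d$, write $p = q + \sum_j m_j (k_1 e_j)$ with $q$ in a box of side $k_1 \le n/2$ near the origin; because $k_1 \le n/2 \le n$, that box sits inside $F_n$, so $x^{(1)}_p = x^{(1)}_q$, and similarly — but here one must instead run the argument simultaneously on both configurations, using that on the overlap region (a box of side roughly $n/2$, which is nonempty since $k_1, k_2 \le n/2$) the two agree, and that each is periodic with its own small period; a short induction on distance from $F_n$ shows the agreement persists everywhere. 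Therefore $x^{(1)} = x^{(2)}$, so $\gamma_1 = \gamma_2$ (each orbit is the orbit of $x^{(i)}$), the desired contradiction.

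The step I expect to be the main obstacle is the periodicity-propagation argument: justifying cleanly that two configurations agreeing on $F_n$ and each admitting all coordinate periods at most $n/2$ must agree everywhere. This is a discrete Fine–Wilf-type statement. The cleanest route is probably the following: it suffices to show $x^{(1)}$ and $x^{(2)}$ have a common period lattice, or more simply, that the common period group $\Lambda_1 \cap \Lambda_2$ together with agreement on one fundamental domain forces global agreement. Since each $\Lambda_i$ contains $k_i e_j$ with $k_i \le n/2$, the subgroup generated by $\{k_1 k_2 e_j : j\}$ (or even just $\{\mathrm{lcm}(k_1,k_2) e_j\}$, a cleaner choice) is contained in $\Lambda_1 \cap \Lambda_2$. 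Pick any coset representative $p$ of $\Z^d / (\Lambda_1 \cap \Lambda_2)$ inside the box $[0, \mathrm{lcm}(k_1,k_2))^d$; reduce the $j$-th coordinate of $p$ modulo $k_1$ using the $k_1 e_j$-period of $x^{(1)}$ to land inside $[0,k_1)^d \subset F_n$ where $x^{(1)}$ and $x^{(2)}$ agree — wait, $x^{(2)}$'s value at that reduced point need not equal its value at $p$ unless we reduce modulo $k_2$. The fix: reduce the $j$-th coordinate modulo $k_1$ for comparing against $x^{(1)}$'s behavior and separately note that, because $k_1, k_2 \le n/2$, both reductions land in $F_n$, and on $F_n$ we can chain equalities $x^{(2)}_p = x^{(1)}_p$ cell by cell. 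I would write this as a small standalone sublemma: if $u, v \in \cA^{\Z^d}$ agree on $[1,n]^d$ and $u$ (resp. $v$) has period $a e_j$ (resp. $b e_j$) with $a, b \le n/2$ for every $j$, then $u = v$; prove it by induction on $d$, the $d=1$ case being a direct Fine–Wilf argument (two periods $a,b$ with $a+b \le n$, and they agree on an interval of length $n$, forces a common period $\gcd(a,b)$ and global agreement), and the inductive step slicing along the last coordinate.
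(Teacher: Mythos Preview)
Your proposal is correct and follows essentially the same route as the paper: both argue by contradiction, fix points $x^{(i)}\in\gamma_i$ agreeing on $F_n$, observe that each has a period at most $n/2$ in every coordinate direction, and then invoke the one-dimensional Fine--Wilf theorem along each axis to propagate the agreement from $F_n$ to all of $\Z^d$. The paper organizes this as an induction over the coordinate index (moving from $q_k$ to $q_{k+1}$ via Fine--Wilf on $(d-1)$-dimensional slices), which is exactly your proposed ``induction on $d$ with slicing along the last coordinate''; your meandering in the middle paragraph is unnecessary once you commit to that clean sublemma at the end.
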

\begin{proof}
 Suppose for contradiction that $\gamma_1$ and $\gamma_2$ are finite orbits with $|\gamma_i| \leq n/2$ and $W_n(\gamma_1) \cap W_n(\gamma_2) \neq \varnothing$. We would like to show that $\gamma_1 = \gamma_2$. Let $u$ be in $W_n(\gamma_1) \cap W_n(\gamma_2)$, and let $x^i$ be in $\gamma_i$ such that $x^i|_{F_n-\1} = u$. Let us show that $x^1 = x^2$, which implies that $\gamma_1 = \gamma_2$ (since they are assumed to be finite orbits).

 Let $p = (p_1,\dots,p_d)$ be in $\Z^d$. Let us show that $x^1(p) = x^2(p)$. Define $q_0 = 0 \in \Z^d$, and $q_k = \sum_{j=1}^k p_j e_j$, for $k = 1, \dots, d$. Let us prove by induction on $k$ that $\sigma^{q_k}(x^1)|_{F_n-\1} = \sigma^{q_k}(x^2)|_{F_n-\1}$ for $k=0,\dots,d$. Since $x^1|_{F_n-\1} = u = x^2|_{F_n-\1}$ by construction, the statement holds for $k = 0$. Now suppose that $\sigma^{q_k}(x^1)|_{F_n-\1} = \sigma^{q_k}(x^2)|_{F_n-\1}$ for some $k < d$. Recall that $ \pi_{k+1}(F_n) = \{v \in F_n : v_{k+1} = 1\}$. For $i = 1, 2$, define $f_i : \Z \to \cA^{\pi_{k+1}(F_n)}$ by
\begin{equation*}
 f_i(m) = \sigma^{q_k}(x_i)|_{\pi_{k+1}(F_n) - \1 + m e_{k+1}}.
\end{equation*}
Since $\gamma_i$ is a finite orbit and $|\gamma_i| \leq n/2$, we have that $f_i$ is periodic with period less than or equal to $n/2$. Furthermore, by the inductive hypothesis, $f_1(m) = f_2(m)$ for $m = 0, \dots, n-1$. Thus, by the Fine-Wilf Theorem \cite{FineWilf}, we have that $f_1 = f_2$. In particular, we have shown that $\sigma^{q_{k+1}}(x^1)|_{F_n-\1} = \sigma^{q_{k+1}}(x^2)|_{F_n-\1}$, which completes the inductive step. Hence, $\sigma^{p}(x^1)|_{F_n-\1} = \sigma^{p}(x^2)|_{F_n-\1}$, which gives in particular that $x^1(p) = x^2(p)$. Since $p$ was arbitrary, we conclude that $x^1 = x^2$, which finishes the proof.
\end{proof}

The following proposition gives upper bounds on the asymptotic behavior of the probability of emptiness. The main idea of the proof is to use Lemma \ref{Lemma:Independence} to find a large set of finite orbits which are independent with respect to $\PP$.
\begin{prop} \label{Prop:UB}
Let $\mathcal{A}$ be a finite set, $d$ be in $\N$, and $X = \mathcal{A}^{\Z^d}$.
Let $\Per(X_{\omega})$ be the set of finite orbits in $X_{\omega}$. Then for each $\alpha$ in $[0,1]$ and any $n$,
\begin{equation} \label{Eqn:PerUB}
  \PP \bigl( \Per(X_{\omega}) = \varnothing \bigr) \leq \prod_{|\gamma| \leq n/2} \bigl(1-\alpha^{|\gamma|} \bigr),
\end{equation}
where the product runs over the finite orbits $\gamma$ in $X$ satisfying $|\gamma| \leq n/2$.
Furthermore, for each $\alpha < |\cA|^{-1}$, there exist $C>0$ and $\beta \in (0,1)$ such that for  large enough $n$,
\begin{equation} \label{Eqn:ExpUBsubcrit}
 \PP( \Per(X_{\omega}) = \varnothing ) \leq \zeta_X(\alpha)^{-1}( 1 + C \beta^n),
\end{equation}
and for each $\alpha > |\cA|^{-1}$, there exist $C >0$ and $\beta >1$ such that for large enough $n$,
\begin{equation} \label{Eqn:ExpUBsupercrit}
 \PP ( \Per(X_{\omega}) = \varnothing ) \leq \exp( - C \beta^n).
\end{equation}
\end{prop}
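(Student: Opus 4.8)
The strategy is to isolate a sub-collection of finite orbits that behave independently under $\PP$, apply the basic independence observation, and then control the error terms using the known behavior of $\zeta_X$ near its radius of convergence. First I would prove \eqref{Eqn:PerUB}: if $\Per(X_\omega) = \varnothing$, then in particular every finite orbit $\gamma$ with $|\gamma| \leq n/2$ is forbidden, i.e.\ at least one $F_n$-pattern appearing in $\gamma$ is excluded from $\omega$. By Lemma \ref{Lemma:Independence}, the sets $W_n(\gamma)$ for distinct orbits $\gamma$ with $|\gamma| \leq n/2$ are pairwise disjoint, so the events ``$\gamma$ is forbidden'' are mutually independent. Since $\gamma \subset X_\omega$ requires all of $W_n(\gamma)$ to be allowed and $|W_n(\gamma)| = |\gamma|$ when $n \geq 2|\gamma|$ (distinct points of a length-$\leq n/2$ orbit have distinct $F_n$-windows, by the same Fine--Wilf argument, so the orbit contributes exactly $|\gamma|$ distinct $F_n$-patterns), the probability that $\gamma$ is forbidden is exactly $1 - \alpha^{|\gamma|}$. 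Multiplying over all such orbits gives \eqref{Eqn:PerUB}.

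Next, for the subcritical estimate \eqref{Eqn:ExpUBsubcrit}, I would compare the finite product $\prod_{|\gamma| \leq n/2}(1 - \alpha^{|\gamma|})$ with the full infinite product $\zeta_X(\alpha)^{-1} = \prod_i (1 - \alpha^{|\gamma_i|})$. Their ratio is $\prod_{|\gamma| > n/2}(1 - \alpha^{|\gamma|})^{-1}$, so I need an upper bound on $\sum_{|\gamma| > n/2} -\log(1 - \alpha^{|\gamma|})$, which is comparable to $\sum_{|\gamma| > n/2} \alpha^{|\gamma|}$ since all these terms are small. Writing $p_m$ for the number of finite orbits of cardinality $m$ (or $P_m$ for the number of totally periodic points with period dividing $m$), one has $p_m \leq |\cA|^m / m$ trivially, or more crudely $P_m \leq |\cA|^{c m^{d-1}}$ from period-vector considerations; in either case the tail $\sum_{m > n/2} p_m \alpha^m$ is dominated by a geometric-type series $\sum_{m > n/2} (\alpha|\cA|)^m$ (up to subexponential factors in $m$), which for $\alpha |\cA| < 1$ is bounded by $C_0 \beta_0^n$ for some $\beta_0 \in (\alpha|\cA|, 1)$ absorbing any polynomial or subexponential corrections. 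Exponentiating, $\prod_{|\gamma| > n/2}(1-\alpha^{|\gamma|})^{-1} \leq 1 + C\beta^n$ for a suitable $\beta \in (0,1)$, and combining with \eqref{Eqn:PerUB} yields \eqref{Eqn:ExpUBsubcrit}.

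For the supercritical estimate \eqref{Eqn:ExpUBsupercrit} with $\alpha > |\cA|^{-1}$, the infinite product diverges, so instead I would keep only orbits of a single well-chosen cardinality. Take the "full-shift-like" orbits: consider totally periodic points of period roughly $n/2$ in one coordinate direction (say period $k$ in the $e_1$-direction and period $1$ in the others, so $|\gamma| = k$), of which there are on the order of $|\cA|^k / k$ distinct orbits with $k \leq n/2$. Restricting the product in \eqref{Eqn:PerUB} to just these gives $\PP(\Per(X_\omega) = \varnothing) \leq (1 - \alpha^{k})^{N_k}$ where $N_k \gtrsim |\cA|^k/k$; choosing $k = \lfloor n/2 \rfloor$, we have $\alpha^k N_k \gtrsim (\alpha|\cA|)^{k}/k \to \infty$ exponentially since $\alpha|\cA| > 1$, and using $1 - x \leq e^{-x}$ gives $\PP(\Per(X_\omega) = \varnothing) \leq \exp(-c (\alpha|\cA|)^{n/2}/n) \leq \exp(-C\beta^n)$ for some $\beta \in (1, (\alpha|\cA|)^{1/2})$ that swallows the $1/n$ factor.

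**Main obstacle.** The routine parts are the independence bound \eqref{Eqn:PerUB} and the supercritical case. The delicate point is the tail estimate in the subcritical case: I must verify that the number of finite orbits of cardinality $m$ in $\cA^{\Z^d}$ grows only subexponentially faster than $|\cA|^m$ — more precisely, slowly enough that $\sum_{m > n/2} p_m \alpha^m = O(\beta^n)$ for some $\beta < 1$ when $\alpha|\cA| < 1$. For $d = 1$ this is immediate ($p_m \leq |\cA|^m$), but for $d > 1$ a finite orbit of cardinality $m$ can have a period lattice of index $m$ with many shapes, so one needs to bound the number of sublattices of $\Z^d$ of index $m$ (which is polynomial in $m$) and, for each, the number of periodic configurations with that period lattice (at most $|\cA|^m$). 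This gives $p_m \leq |\cA|^m \cdot m^{C(d)}$, and the polynomial factor is harmlessly absorbed into $\beta$; making this counting precise — and confirming that $\zeta_X(\alpha)$ is indeed finite with radius of convergence $|\cA|^{-1}$, as asserted in the Preliminaries — is the one step requiring genuine care.
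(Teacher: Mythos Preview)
Your proposal is correct and follows essentially the same route as the paper: the independence bound \eqref{Eqn:PerUB} via Lemma~\ref{Lemma:Independence}, the subcritical tail estimate via the orbit-count bound $|P_j| \leq j^{d+1}|\cA|^j$ (the paper cites \cite{LindZeta} for this, which is exactly your sublattice-counting argument), and the supercritical bound via the one-directional periodic orbits giving $|P_j| \geq |\cA|^j/(2j)$. One small caution: your offhand remark that ``$p_m \leq |\cA|^m/m$ trivially'' holds only for $d=1$, and the alternative ``$P_m \leq |\cA|^{cm^{d-1}}$'' would be far too weak to sum against $\alpha^m$; fortunately your Main Obstacle paragraph supplies the correct bound $p_m \leq m^{C(d)}|\cA|^m$, which is what both you and the paper actually use.
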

\begin{proof}
Let $X$ and $\Per(X_{\omega})$ be as above, and let $\alpha$ be in $[0,1]$. For a finite orbit $\gamma$ in $X$, let $F(\gamma)$ be the event that $\gamma$ is forbidden in $X_{\omega}$ (\textit{i.e.}, $\gamma \notin \Per(X_{\omega})$). By Lemma \ref{Lemma:Independence}, if $\gamma_1 \neq \gamma_2$ and $|\gamma_i| \leq n/2$ for $i = 1,2$, then $W_n(\gamma_1) \cap W_n(\gamma_2) = \varnothing$, and therefore the events $F(\gamma_1)$ and $F(\gamma_2)$ are independent. In fact, the entire collection of events $\{F(\gamma) : |\gamma| \leq n/2\}$ is independent. Thus, by inclusion, independence, and the fact that $\PP( F(\gamma)) = 1- \alpha^{|W_n(\gamma)|} = 1- \alpha^{|\gamma|}$ whenever $|\gamma|\leq n/2$, we have
\begin{align*}
\begin{split} 
 \PP( \Per(X_{\omega}) = \varnothing ) & \leq \PP \Biggl( \bigcap_{|\gamma|\leq n/2} F(\gamma) \Biggr) \\
 & = \prod_{|\gamma|\leq n/2} \PP\Bigl( F(\gamma) \Bigr) \\
 &  = \prod_{|\gamma| \leq n/2} \Bigl(1-\alpha^{|\gamma|} \Bigr),
\end{split}
\end{align*}
which proves (\ref{Eqn:PerUB}).

Now suppose $\alpha <  |\cA|^{-1}$ (so that $\alpha |\cA| < 1$). Choose $\lambda$ such that $|\cA| < \lambda$ and $\alpha \lambda < 1$. 
Let $P_j$ be the set of finite orbits $\gamma$ in $\mathcal{A}^{\Z^d}$ such that $|\gamma| = j$. Then by (\ref{Eqn:PerUB}) and the fact that $0< \zeta_X(\alpha) < \infty$ for $\alpha < |\cA|^{-1}$, we have
\begin{align} 
 \begin{split} \label{Eqn:GoHeels}
  \PP( \Per(X_{\omega}) = \varnothing ) & \leq \prod_{|\gamma| \leq n/2} \Bigl(1-\alpha^{|\gamma|}\Bigr) \\
 & =  \zeta_X(\alpha)^{-1}  \exp \Biggl( - \sum_{|\gamma| > n/2} \log ( 1- \alpha^{|\gamma|})  \Biggr) \\
 & =  \zeta_X(\alpha)^{-1} \exp \Biggl( - \sum_{j > n/2}  |P_j| \log ( 1- \alpha^j)  \Biggr).
\end{split}
\end{align}
Note that for all $j$ in $\N$, we have $|P_j| \leq j^{d+1} |\cA|^j$ (see \cite[Proposition 4.3]{LindZeta}). Thus, there exists $n_0$ such that for $j > n_0/2$, it holds that $|P_j| \leq \lambda^j$. Let $\beta_1 = (\alpha \lambda)^{1/2}$, and note that $\beta_1 < 1$ (since $\alpha \lambda < 1$).
By calculus, there exist $C_1,C_2>0$ and $n_1 \geq n_0$ such that if $n \geq n_1$ and $j >n/2$, then $-\log(1-\alpha^j) \leq C_1 \alpha^j$ and $\exp( C_1 (1-\alpha \lambda)^{-1} \beta_1^n) \leq 1 + C_2 \beta_1^n$. Then for $n \geq n_1$, using these facts and (\ref{Eqn:GoHeels}), we obtain
\begin{align*}
\begin{split} 
  \PP \bigl( \Per(X_{\omega} \bigr) = \varnothing ) & \leq \zeta_X(\alpha)^{-1} \exp \biggl( - \sum_{j > n/2}  |P_j| \log ( 1- \alpha^j)  \biggr) \\
 & \leq \zeta_X(\alpha)^{-1} \exp \biggl( C_1 \sum_{j >n/2} \lambda^j \alpha^j  \biggr) \\
 & = \zeta_X(\alpha)^{-1} \exp \biggl( C_1 \beta_1^n (1-\alpha \lambda)^{-1}  \biggr) \\
 & \leq \zeta_X(\alpha)^{-1}( 1 + C_2 \beta_1^n).
\end{split}
\end{align*}
Taking $C = C_2$ and $\beta = \beta_1$, we obtain (\ref{Eqn:ExpUBsubcrit}).

Now suppose $\alpha > |\cA|^{-1}$ (so that $\alpha |\cA| > 1$). Choose $\lambda < |\cA|$ such that $\alpha \lambda >1$. By (\ref{Eqn:PerUB}), we have
\begin{align}
 \begin{split} \label{Eqn:PerUBdook}
  \PP \bigl( \Per(X_{\omega} \bigr) = \varnothing ) & \leq  \prod_{|\gamma| \leq n/2} \Bigl(1-\alpha^{|\gamma|}\Bigr) \\
  & = \exp \Biggl( \sum_{j \leq n/2}  |P_j| \log ( 1- \alpha^j)  \Biggr).
 \end{split}
\end{align}
To get a lower bound on $|P_j|$, we count the finite orbits of cardinality $j$ that are constant along directions $e_2, \dots, e_d$. A point in such an orbit can be obtained by concatenating a word of length $w$ of length $j$ along direction $e_1$, provided that the infinite sequence of repeated $w$'s has least period $j$. There are at least $|\cA|^j/2$ words $w$ that with that property, which shows that there are at least $|\cA|^j/(2j)$ finite orbits of size $j$ obtained in this way. Hence, $|P_j| \geq |\cA|^j/  (2j)$, and therefore there exists $n_2$ such that if $j \geq \lfloor n_2/2 \rfloor $, then $\lambda^j < |P_j|$. Also, there exist $C_3>0$ and $n_3 \geq n_2$ such that if $j \geq \lfloor n_3/2 \rfloor$, then $\log(1-\alpha^j) \leq -C_3 \alpha^j$. Hence, for $n \geq n_3$, by these facts and (\ref{Eqn:PerUBdook}), we have
\begin{align*}
 \begin{split}
  \PP \bigl( \Per(X_{\omega} \bigr) = \varnothing ) & \leq \exp \Biggl( \sum_{j \leq n/2}  |P_j| \log ( 1- \alpha^j)  \Biggr) \\
  & \leq \exp \biggl( \lambda^{\lfloor n/2 \rfloor } \log \bigl( 1- \alpha^{ \lfloor n/2 \rfloor}\bigr)  \biggr) \\
  & \leq \exp \biggl( - C_3 (\alpha \lambda)^{\lfloor n/2 \rfloor} \biggr).
 \end{split}
\end{align*}
Setting $C = C_3 (\alpha \lambda)^{-1}$ and $\beta = (\alpha \lambda)^{1/2}$ completes the proof of (\ref{Eqn:ExpUBsupercrit}).
\end{proof}

\subsection{Periodic behavior} \label{Sect:PeriodicBehavior}

The ultimate goal of this section is to prove Lemma \ref{Lemma:Smallj}, which states that if $|W_n(u)|$ is small enough, then there exists a finite orbit $\gamma$ such that $\gamma$ is allowed whenever $u$ is allowed. First, we need some preliminary notation and lemmas. 

For the next few lemmas, we'll consider combinatorics of patterns with $d=1$, which we will call \textit{words}. Suppose $k$ and $n$ are fixed and $k > n$. For a word $u$ in $\cA^k$, let $W'_m(u)$ be the set of words of length $m$ that appear in $u$ at least once with first coordinate not greater than $k-n+1$:
\begin{equation*}
 W'_m(u) = \{ w : \exists 1 \leq i \leq k-n+1, \, u[i,i+m-1] = w\}.
\end{equation*}
The following two lemmas are restatements of results by Morse and Hedlund \cite{MorseHedlund}. Nonetheless, we include proofs for completeness.
\begin{lemma} \label{Lemma:Periodicity1}
Suppose $k > n$, $u \in \cA^k$, and $|W_n(u)| \leq n$. Then at least one of the following conditions holds: (i) there exists $m<n$ such that every word in $W'_m(u)$ has a unique  right extension in $W'_{m+1}(u)$, or (ii) there exists a symbol $b$ in $\cA$ such that $u_t = b$ for $t \in [1,k-n+1]$.
\end{lemma}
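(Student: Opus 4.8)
The plan is to analyze the "Rauzy graph" / word-extension structure of $u$. Consider, for each $m$ with $1 \leq m \leq n$, the set $W'_m(u)$ of length-$m$ subwords of $u$ occurring with starting position at most $k-n+1$, together with the natural left-to-right extension map. The key counting observation is that $|W'_m(u)|$ is non-decreasing in $m$ (each word of length $m+1$ restricts to one of length $m$, and this restriction is surjective by the definition of $W'_m$ via positions $\le k-n+1 \le k-m$), and $|W'_m(u)| \le |W_m(u)| \le |W_n(u)| \le n$ for all $m \le n$ — actually more carefully $|W'_m(u)|$ relates to $|W_m(u)|$, but the bound $\le n$ is what matters. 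Since the sequence $|W'_1(u)| \le |W'_2(u)| \le \cdots \le |W'_n(u)| \le n$ takes integer values in $[1,n]$ across $n$ indices, either it is constant somewhere, i.e. there exists $m < n$ with $|W'_m(u)| = |W'_{m+1}(u)|$, or else it is strictly increasing on $\{1,\dots,n\}$ forcing $|W'_1(u)| = 1$.

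If $|W'_m(u)| = |W'_{m+1}(u)|$ for some $m < n$, then the restriction map $W'_{m+1}(u) \to W'_m(u)$ (forgetting the last symbol) is a surjection between finite sets of equal cardinality, hence a bijection; this says exactly that every word in $W'_m(u)$ has a unique right extension to a word in $W'_{m+1}(u)$, which is condition (i). On the other hand, if the sequence is strictly increasing across all $n$ indices, then since it ends at a value $\le n$ and increases by at least $1$ each step starting from $|W'_1(u)| \ge 1$, we must have $|W'_1(u)| = 1$; that is, there is a single symbol $b \in \cA$ with $u_t = b$ for every $t \in [1, k-n+1]$, which is condition (ii). So the dichotomy follows from a clean pigeonhole on a monotone bounded integer sequence, plus the surjectivity-of-restriction fact.

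The step I expect to require the most care is verifying the two structural facts underpinning the counting: first, that the restriction map $W'_{m+1}(u) \to W'_m(u)$ is genuinely surjective — this needs the index cutoff to be set up so that any length-$m$ occurrence at position $\le k-n+1$ can be extended on the right by one symbol while staying inside $u$, which holds because $k - n + 1 \le k - m$ for $m < n$ (indeed $m \le n-1$); and second, the bound $|W'_m(u)| \le |W_n(u)|$ for $m \le n$, which follows since distinct length-$m$ words extend (at any fixed occurrence) to distinct — or at least the natural injection-counting argument shows $|W'_m(u)| \le |W_m(u)| \le |W_n(u)|$, the last inequality because an $n$-window determines its prefix $m$-window and conversely each $m$-word occurring at a valid position sits inside some $n$-window occurring at a valid position. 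I would state these as one or two lines each. Everything else is the pigeonhole argument above, so the proof is short; the only real subtlety is bookkeeping with the "$\le k-n+1$" position constraint so that the extension maps are well-defined and surjective.
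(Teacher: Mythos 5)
Your proof is correct and is essentially the paper's argument: the same surjective restriction map $W'_{m+1}(u)\to W'_m(u)$, the same monotonicity of $|W'_m(u)|$, and the same pigeonhole on the chain $|W'_1(u)|\le\cdots\le|W'_n(u)|\le n$ (your ``strictly increasing forces $|W'_1(u)|=1$'' is just the contrapositive of the paper's ``if (ii) fails then $|W'_1(u)|\ge 2$, contradiction''). One small caution: the parenthetical claim $|W_m(u)|\le|W_n(u)|$ for all $m\le n$ is false for finite words in general (subword complexity of a finite word eventually decreases), but you correctly never rely on it --- only on the final bound $|W'_n(u)|\le|W_n(u)|\le n$, which is immediate.
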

\begin{proof}
For $1 \leq m < n$, let $\rho_m : W'_{m+1}(u) \to W'_m(u)$ be defined by removing the last symbol of any word of length $m+1$, \textit{i.e.}, if $w$ is in $\cA^m$ and $a$ is in $\cA$, then $\rho_m(wa) = w$. Then $\rho_m$ is surjective, so $|W'_{m+1}(u)| \geq  |W'_m(u)|$. 

If (ii) holds, then we're done. Suppose (ii) does not hold, and therefore we must have $|W'_1(u)| \geq 2$. If $|W'_{m+1}(u)| = |W'_m(u)|$ for some $m < n$, then $\rho_m$ is a bijection, and hence every word in $W'_m(u)$ has a unique extension in $W'_{m+1}(u)$. Otherwise, $|W'_{m+1}(u)| > |W'_m(u)|$ for $m = 1, \dots, n-1$, and therefore $|W'_n(u)| \geq (n-1) + |W'_1(u)| \geq n+1$, which contradicts $|W'_n(u)| \leq |W_n(u)|  \leq n$. We conclude that if (ii) does not hold, then (i) must hold.
\end{proof}

In the following lemma, we show that words $u$ (still in the setting $d = 1$) satisfying $|W_n(u)| \leq n$, can be decomposed into a prefix, a periodic part, and a suffix, and we give bounds on the lengths of these parts.
\begin{lemma} \label{Lemma:Periodicity2}
 Suppose $k > 3 n$ and $|W_n(u)| = j \leq n$. Then the word $u[n,k-n]$ is periodic with period not greater than $j$. 
\end{lemma}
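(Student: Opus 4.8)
The plan is to combine Lemma \ref{Lemma:Periodicity1} with the Fine--Wilf theorem in much the same spirit as the proof of Lemma \ref{Lemma:Independence}, working inside the region $[n,k-n]$ where we have enough room to run the argument. First I would apply Lemma \ref{Lemma:Periodicity1} to $u$. Case (ii) is the easy one: if $u_t = b$ for all $t \in [1,k-n+1]$, then $j = |W_n(u)| = 1$ (the only length-$n$ word appearing in $u$ with left-most coordinate $\le k-n+1$ is $b^n$), and $u[n,k-n]$ is constant, hence periodic with period $1 \le j$. So the substance is in case (i): there exists $m < n$ such that every word in $W'_m(u)$ has a unique right extension in $W'_{m+1}(u)$.

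In case (i), the idea is that the ``unique right extension'' property forces $u$ to be eventually periodic from coordinate $1$ onward with period at most $|W'_m(u)| \le |W_n(u)| = j$. Concretely, define a map $T$ from the set of length-$m$ words appearing in $u[1,k-n+m]$ (say) to $\cA$ sending a word $w \in W'_m(u)$ to the unique symbol $a$ such that $wa \in W'_{m+1}(u)$; then for every position $i$ with $1 \le i \le k-n$, the symbol $u_{i+m}$ is determined by $u[i,i+m-1]$, i.e.\ $u[i+1,i+m] = T'(u[i,i+m-1])$ for the induced shift map $T'$ on length-$m$ words. Iterating, the sequence of length-$m$ windows $u[1,m], u[2,m+1], \dots$ is the forward orbit of a single point under a deterministic map on a set of size at most $j$, so this sequence of windows becomes periodic with some eventual period $q \le j$, and moreover — this is the key point — because every window is \emph{determined by its predecessor}, the map is injective on the windows that actually recur, and one can check the window sequence is in fact periodic (not merely eventually periodic) from the start, or at least periodic on a tail that covers $[n,k-n]$ once $k > 3n$ gives enough slack. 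This yields $u_{t+q} = u_t$ for all $t$ in a range containing $[n,k-n]$, so $u[n,k-n]$ is periodic with period $q \le j$.

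I would organize the case (i) argument as follows: (1) extract the deterministic successor rule on length-$m$ windows from the uniqueness hypothesis; (2) observe the window at position $i$ is a function of the window at position $1$ composed with $i$ applications of the rule, so among the first $j+1$ windows two coincide, giving a period $q \le j$ for the window sequence on a forward tail; (3) argue that the recurrence is ``reversible enough'' — since distinct windows have distinct successors is not automatic, but distinct \emph{positions with the same window} propagate the same future, and a counting/pigeonhole step over the window sequence shows the periodic tail starts by position $n$ when $k > 3n$; (4) translate periodicity of windows into periodicity of the symbol sequence on $[n,k-n]$ and read off the bound $q \le j \le n$. The bookkeeping about \emph{where} the periodic part begins is where I expect to lose the most time, since $W'_m$ only controls appearances with left coordinate $\le k-n+1$, so one has to be careful that the successor rule is valid throughout $[1,k-n]$ and that the first repeated window occurs early enough; the hypothesis $k > 3n$ is presumably exactly what makes the windows in $[n,k-n]$ all lie safely inside the controlled region.

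The main obstacle, then, is not the Fine--Wilf-style periodicity itself but pinning down that $u[n,k-n]$ (as opposed to some shorter interior subword, or $u$ eventually-periodically) is genuinely periodic with period $\le j$: one must rule out a long aperiodic prefix by using that the window alphabet has size only $j \le n$ while the controlled range $[1,k-n]$ has length $k-n > 2n \ge 2j$, so the window sequence restricted to $[1,k-n]$ repeats a window within its first $j$ terms and the deterministic rule then forces periodicity on the remainder, which includes all of $[n,k-n]$.
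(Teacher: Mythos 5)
Your proposal is correct and follows essentially the same route as the paper: invoke Lemma \ref{Lemma:Periodicity1}, dispose of the constant case, turn the unique-right-extension property into a deterministic successor rule on windows, and then pigeonhole among the first $j+1$ windows to find a repeat at positions $t_1 < t_2 \leq j+1$, which determinism propagates into periodicity of period $t_2 - t_1 \leq j$ starting at $t_1 \leq j \leq n$ (the paper runs this with length-$n$ windows rather than length-$m$ ones, but the argument is the same). Your worry in step (3) about the map being ``reversible enough'' is a red herring --- no injectivity is needed, since the pigeonhole you state in your final paragraph already forces the first repeat, and hence the start of the periodic tail, to occur by position $j \leq n$, which is exactly how the paper concludes.
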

\begin{proof}
If there exists a symbol $b$ in $\cA$ such $u_t = b$ for $t \in [1,k-n+1]$, then the conclusion holds trivially. 
By Lemma \ref{Lemma:Periodicity1}, if there is no such symbol, then there exists $m < n$ such that every word in $W'_m(u)$ has a unique extension in $W'_{m+1}(u)$. Let $i \in [1, k-2n]$, and let $w = u[i,i+n-1]$. Write $w = w_1 w_2$, where $|w_2| = m$. Note that $w_2$ is in $W'_m(u)$ (since $i < k-2n$ and $w$ has length $n$), and therefore $w_2$ has a unique right extension in $W'_{m+1}(u)$. Hence, $w$ has a unique right extension in $W'_{n+1}(u)$. We have shown that $u[i+1,i+n]$ is determined uniquely by $u[i,i+n-1]$ for each $i$ in $[1,k-2n]$. Since $|W_n(u)|=j$, there exist $1 \leq t_1 < t_2 \leq j+1$ such that $u[t_1,t_1+n-1] = u[t_2,t_2+n-1]$. 
Since $u[i+1,i+n]$ is determined uniquely by $u[i,i+n-1]$ for each $i$ in $[1,k-2n]$, we now have that $u[t_1+\ell(t_2-t_1),t_1+\ell(t_2-t_1) +n - 1] = u[t_1,t_1 +n - 1]$ for all $\ell$ such that $t_1+\ell(t_2-t_1) \leq k-2n$. Thus, the word $u[n,k-n]$ is periodic with period not greater than $j$. 
\end{proof}

We now return to the general setting of patterns on subsets of $\Z^d$, with arbitrary $d \geq 1$. Suppose $u$ is in $\cA^{F_k}$. The following lemma shows that if $|W_n(u)|$ is small enough, then there exists a finite orbit $\gamma$ such that $\gamma$ appears in $X_{\omega}$ whenever $u$ does. In the proof, we show that if $|W_n(u)| \leq n/2$, then $u|_{\interior_n(F_k)}$ is a totally periodic pattern that can be extended to a totally periodic configuration on $\Z^d$ without adding any new $F_n$-patterns. 
\begin{lemma} \label{Lemma:Smallj}
Suppose $k>4n$, $u \in \cA^{F_k}$, and $|W_n(u)|\leq n/2$. Then there exists a finite orbit $\gamma$ such that $W_n(u) \supset W_n(\gamma)$ and $|\gamma|\leq n/2$. 
\end{lemma}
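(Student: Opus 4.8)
The plan is to reduce the $d$-dimensional statement to the one-dimensional Lemma \ref{Lemma:Periodicity2} applied along each coordinate direction, and then to use the resulting periodicity to build a totally periodic configuration. First I would set $E = \interior_n(F_k) = [n+1,k-n]^d$, which is nonempty since $k > 4n$, and show that $u|_E$ is ``periodic in each coordinate direction'' with small period. Fix a coordinate $i \in [1,d]$ and a line $\ell$ parallel to $e_i$ meeting $F_k$; restricting $u$ to $\ell$ gives a word $u_\ell$ of length $k$, and its set of $F_n$-subpatterns (as a word, in the sense $W'_n$) is contained in the set of $F_n$-patterns of $u$ that one obtains by looking at $n$-cubes whose $e_i$-extent lies along $\ell$. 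A cleaner way: apply Lemma \ref{Lemma:Periodicity2} not to a single line but to the $(d-1)$-dimensional ``slabs.'' Concretely, for each $q$ in the hyperplane $\pi_i(F_k)$, the word $m \mapsto u|_{(q + (m-1)e_i)\text{-translate of the face}}$ — more precisely the $e_i$-column of $n$-cube patterns — takes at most $|W_n(u)| \le n/2$ values, so by Lemma \ref{Lemma:Periodicity2} (with $j \le n/2$) it is periodic on $[n, k-n]$ with period $p_i(q) \le n/2$. The point is that $u|_E$ is $e_i$-periodic with period dividing $\mathrm{lcm}$ of these, but more usefully: each $F_n$-pattern of $u$ appearing inside $E$ determines its $e_i$-shift by $p$ uniquely once $p$ is a common period.

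The key step is to promote these one-dimensional periodicities into a single period vector that works simultaneously and then to extend. I would argue as follows. Since $|W_n(u)| \le n/2 < \infty$, among the $n$-cubes contained in $E$ sharing a fixed ``base face'' there are at most $n/2$ distinct $F_n$-patterns, so along each coordinate axis the column of $F_n$-patterns through any point of $\interior_n(F_k)$ is eventually (in fact immediately, on the relevant range) periodic with period $\le n/2$; because $k > 4n$ the periodic regime covers a full fundamental domain in every direction. This lets me define, for a suitable period $P \le n/2$ in each direction (take $P = n/2$ rounded, or the lcm; any common multiple $\le$ something bounded works once we only need existence of \emph{a} finite orbit, with $|\gamma| \le n/2$ forced by taking the minimal periods), a configuration $x \in \cA^{\Z^d}$ that agrees with $u$ on a fundamental domain inside $E$ and is periodic under the lattice $\Lambda$ generated by the period vectors. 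One must check this is well-defined: overlapping translates of the fundamental block agree because they already agree inside $u|_E$ (the periods were extracted \emph{from} $u$). Then $\gamma = \{\sigma^p x : p \in \Z^d\}$ is a finite orbit with $|\gamma| = |\Z^d/\Lambda| \le n/2$ (choosing the periods minimal; if necessary pass to the product of minimal periods and note it is still $\le n/2$ — actually one should be careful here, see below).

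The main obstacle, and the place I would spend the most care, is two-fold: (a) ensuring $W_n(\gamma) \subset W_n(u)$, i.e. that extending periodically creates no \emph{new} $F_n$-patterns, and (b) controlling $|\gamma| \le n/2$. For (a): every $n$-cube in $x$ is a translate of some $n$-cube, and by $\Lambda$-periodicity it can be translated into a fixed fundamental region sitting inside $E = \interior_n(F_k)$; there the $n$-cube lies entirely within $F_k$, so its pattern is some element of $W_n(u)$ — this is exactly why we took the $n$-interior rather than all of $F_k$, so that $n$-cubes based in the fundamental domain stay inside $F_k$. For (b): the honest statement is that each coordinate period $p_i \le n/2$, but the orbit size is roughly $\prod p_i$, which could exceed $n/2$. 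I expect the resolution is that the hypothesis $|W_n(u)| \le n/2$ is strong enough to force a much smaller orbit — in fact a single common period $\le n/2$ in the strong sense that $u|_E$ is invariant under $\sigma^{p e_i}$ for one \emph{shared} value $p \le n/2$, or that the Morse–Hedlund bound $|W_n(u)| \le n/2$ with $d$-dimensional complexity actually forces $u|_E$ to be periodic under a rank-$d$ sublattice of index $\le n/2$. This is the one step I would need to think through carefully, likely by iterating: first get $e_1$-periodicity with period $p_1 \le |W_n(u)|$, then observe the quotient pattern has complexity bounded by $|W_n(u)|/p_1$ in the remaining directions, and induct, so that the total index telescopes to at most $|W_n(u)| \le n/2$. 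Once that is pinned down, the extension and the verification $W_n(u) \supset W_n(\gamma)$, $|\gamma| \le n/2$ are routine.
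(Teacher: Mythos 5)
Your overall strategy is the paper's: view $u$ slab-wise as a word over the alphabet $\cA^{\pi_i(F_n)}$, apply Lemma~\ref{Lemma:Periodicity2} in each direction to get periods $r_i \leq n/2$, extend periodically, and check that no new $F_n$-patterns are created because every $n$-cube of the extension can be translated back into $\interior_n(F_k)$. Your handling of point (a) is correct and matches the paper.

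The genuine gap is exactly the point you flag as (b), and your proposed resolution does not work as stated. You try to bound $|\gamma|$ by controlling the index $\prod_i r_i$ of the rectangular period lattice, via a ``telescoping complexity'' induction ($e_1$-period $p_1$, quotient complexity $|W_n(u)|/p_1$, etc.); that multiplicative bookkeeping is not justified, and indeed $\prod_i r_i$ genuinely can exceed $n/2$ (the full stabilizer of $x$ may be a non-rectangular lattice of much smaller index than the rectangular one you construct). The correct resolution is simpler and sidesteps the lattice index entirely: since each $r_i \leq n/2 < n$, the fundamental domain $D = n\1 + \prod_{i=1}^d [0,r_i-1]$ is contained in a single translate of $F_n$. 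Hence every point of the orbit $\gamma$ is determined by its restriction to one $n$-cube, so the map $y \mapsto y|_{F_n + (n-1)\1}$ is an injection of $\gamma$ into $W_n(\gamma)$, giving $|\gamma| = |W_n(\gamma)| \leq |W_n(u)| \leq n/2$. In other words, you should bound the orbit cardinality by the number of distinct $F_n$-patterns it contains, not by the covolume of the period vectors. A secondary step you gloss over: the assertion that ``overlapping translates of the fundamental block agree because they already agree inside $u|_E$'' needs an argument, because the period $r_i$ is extracted from the slab anchored at the face $\pi_i(F_n)$, and one must propagate it to slabs reached after shifting in the other coordinate directions; the paper does this by an induction on coordinates using the Fine--Wilf theorem to match the shifted slab function $g$ with $f_{i}$ on an overlap of length $n$.
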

\begin{proof}
Let $k > 4n$ and $u \in \cA^{F_k}$ with $|W_n(u)| \leq n/2$. Recall that $\pi_i$ is the projection of $\Z^d$ onto the hyperplane passing through $\1$ perpendicular to the standard basis vector $e_i$. For each $i \in [1,d]$, we define $f_{i} : [0,k-1] \to \cA^{\pi_i(F_n)}$ by 
\begin{equation} \label{Eqn:Definef_j}
f_{i}(m) = u|_{\pi_i(F_n) + m e_i}.
\end{equation}
 Viewing $f_{i}$ as a word of length $k$ (with alphabet $\cA^{\pi_i(F_n)}$ and $d = 1$), we see that each word of length $n$ in $f_i$ corresponds to an $F_n$-pattern in $u$. Hence $|W_n(f_{i})| \leq |W_n(u)|\leq n/2$. Then by Lemma \ref{Lemma:Periodicity2}, we obtain that $f_{i}|_{[n,k-n]}$ is periodic with period not greater than $n/2$. 

For each $i$ in $[1,d]$, define $r_i$ as the least period of $f_{i}|_{[n,k-n]}$. Let $L$ be the lattice in $\Z^d$ generated by $\{r_1e_1,\dots,r_de_d\}$. 
Let us now show that if $p \in L$ and $F_n + (n-1)\1+p \subset \interior_n(F_k)$, then  
\begin{equation} \label{Eqn:PeriodicityFiniteWords}
u|_{F_n+(n-1)\1} = u|_{F_n + (n-1)\1+p}.
\end{equation}

Fix such a $p$, and note that $r_i$ divides $p_i$ for each $i = 1, \dots, d$. For each $j$ in $[1,d]$, let $q_j = \sum_{i=1}^j p_i e_i$, so that $p = q_d$. Let $q_0 = 0 \in \Z^d$. We claim by induction on $j$ that $u|_{F_n+(n-1)\1} = u|_{F_n + (n-1)\1+q_j}$ for each $j \in [0,d]$. The base case ($j = 0$) is trivial. Now suppose for induction that $u|_{F_n+(n-1)\1} = u|_{F_n + (n-1)\1+q_j}$ holds for some $j < d$. Define $g : [0,k-1] \to \cA^{\pi_{j+1}(F_n)}$  by
\begin{align*}
 g(m) = u|_{\pi_{j+1}(F_n) +(n-1)\1 + q_j + m e_{j+1}}.
\end{align*}
Viewing $g$ as a word of length $k$, we see that $|W_n(g)| \leq |W_n(u)|\leq n/2$. Then by Lemma \ref{Lemma:Periodicity2}, we obtain that $g|_{[n,k-n]}$ is periodic with period not greater than $n/2$. Furthermore, the inductive hypothesis gives that $g|_{[n,2n-1]} = f_{j+1}|_{[n,2n-1]}$ (recall that $f_{j+1}$ was defined in (\ref{Eqn:Definef_j}) above).  Then by the Fine-Wilf Theorem \cite{FineWilf}, we conclude that $g|_{[n,k-n]} = f_{j+1}|_{[n,k-n]}$. Since $r_{j+1}$ is a period for $f_{j+1}|_{[n,k-n]}$ (by definition of $r_{j+1}$) and $g|_{[n,k-n]} = f_{j+1}|_{[n,k-n]}$, we see that $r_{j+1}$ is a period for $g|_{[n,k-n]}$. Since $r_{j+1}$ divides $p_{j+1}$ (since $p$  is in the lattice $L$), we conclude that
\begin{equation} \label{Eqn:NCAA}
u|_{F_n + (n-1)\1+q_{j+1}} = u|_{F_n+(n-1)\1+q_j}.
\end{equation}
Combining the inductive hypothesis with (\ref{Eqn:NCAA}), we obtain
\begin{equation*}
u|_{F_n + (n-1)\1+q_{j+1}} = u|_{F_n+(n-1)\1+q_j} = u|_{F_n + (n-1)\1}.
\end{equation*}
which concludes the inductive step. Hence $u|_{F_n+(n-1)\1} = u|_{F_n + (n-1)\1+p}$, and we have verified (\ref{Eqn:PeriodicityFiniteWords}). 

With (\ref{Eqn:PeriodicityFiniteWords}) established, let us now construct the finite orbit in the conclusion of the lemma.
Let $D = n\1 + \prod_{i=1}^d [0,r_i-1]$. Let $x$ be the point in $\cA^{\Z^d}$ defined by $x|_D = u|_D$ and $x(v+r_i e_i) = x(v)$ for all $v$ in $\Z^d$. Let $\gamma$ be the finite orbit containing $x$. Note that $D$ is a fundamental domain for $\gamma$ (meaning that every point in $\gamma$ is uniquely determined by its restriction to $D$). Then by (\ref{Eqn:PeriodicityFiniteWords}) and the fact that $k > 4n$, we have that 
\begin{equation} \label{Eqn:WordsInX}
 \{ x|_{F_n+v-\1} : v \in D\} = \{ u|_{F_n+v-\1} : v \in D\}.
\end{equation}
By (\ref{Eqn:WordsInX}), we obtain
\begin{align*}
 W_n(\gamma) & = W_n(x) \\
  & = \{ x|_{F_n+v-\1} : v \in D\} \\
  & = \{u|_{F_n+v-\1} : v \in D\} \\
  & \subset W_n(u).
\end{align*}
Furthermore, since $D \subset F_n+(n-1)\1$ and any point in $\gamma$ is determined by its restriction to $D$, we have that $|\gamma| = |W_n(x)|$. Therefore we have that $|\gamma| = |W_n(x)| \leq |W_n(u)| \leq n/2$, which concludes the proof. 
\end{proof}


\subsection{Basics of repeat covers} \label{Sect:BasicsOfRepeats}

In the following lemmas, we establish some basic results relevant to repeat covers. (Recall that repeat covers and associated notation are defined in Section \ref{Sect:Repeats}.) These results are used in Section \ref{Sect:EmptinessProof}. First, we show that a pattern $u$ with shape $E$ is uniquely characterized by a repeat cover $J$ of $u$ and the pattern $u|_{E \setminus \rArea{J}}$. 
\begin{lemma} \label{Lemma:DefineAWord}
 Suppose $J \subset \cC_n(E) \times \cC_n(E)$ and $w \in \mathcal{A}^{E \setminus \rArea{J}}$. Then there is at most one pattern $u$ in $\mathcal{A}^{E}$ such that $u|_{E \setminus \rArea{J}} = w$ and $J$ is a repeat cover for $u$.
\end{lemma}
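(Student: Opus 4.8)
The plan is to argue by "propagating" the known values from $w$ on $E \setminus A(J)$ into $A(J)$ using the repeat structure, and to show that any two patterns $u, u'$ satisfying the hypotheses must agree everywhere. First I would reduce to showing that if $u$ and $u'$ are both patterns on $E$ with $u|_{E\setminus A(J)} = w = u'|_{E\setminus A(J)}$ and $J$ is a repeat cover for \emph{both} $u$ and $u'$, then $u = u'$. The essential observation is structural: for a pair $(S_1, S_2) \in J$, the definition of $n$-repeat forces $S_1$ to be the lexicographically minimal appearance of the pattern $u|_{S_1}$, and in particular $S_1 < S_2$ in the lexicographic order on $n$-cubes (identifying a cube with its minimal corner). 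So every cube $S_2$ appearing as the "target" of a pair in $J$ is lexicographically strictly later than its associated source $S_1$.

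The key step is an induction on the cells of $E$ ordered lexicographically. Let $p$ be the lexicographically least cell of $E$ on which $u$ and $u'$ \emph{could} disagree; I claim $u_p = u'_p$. If $p \notin A(J)$, then $u_p = w_p = u'_p$ and we are done. If $p \in A(J)$, then $p \in S_2$ for some pair $(S_1, S_2) \in J$, and since $J$ is a repeat cover for $u$ we have $u|_{S_1} = u|_{S_2}$; likewise $u'|_{S_1} = u'|_{S_2}$ since $J$ is a repeat cover for $u'$. Writing $p = m(S_2) - \1 + t$ for the appropriate offset $t$ inside the cube, the corresponding cell in $S_1$ is $p' = m(S_1) - \1 + t$, and because $S_1 < S_2$ we have $p' < p$ (here I would spell out the coordinatewise comparison: $S_1 < S_2$ means $m(S_1) < m(S_2)$ lexicographically, and adding the same offset preserves this). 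By minimality of $p$, $u_{p'} = u'_{p'}$, hence $u_p = u_{p'} = u'_{p'} = u'_p$, contradicting the choice of $p$. Therefore no such $p$ exists and $u = u'$.

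The main obstacle I anticipate is bookkeeping with the "patterns up to translation" convention and making the lexicographic comparison $p' < p$ fully rigorous — one must check that the map sending a cell of $S_2$ to the corresponding cell of $S_1$ is exactly translation by $m(S_1) - m(S_2)$, and that this translation strictly decreases the lexicographic order, which is where the hypothesis that $S_1$ is the \emph{minimal} appearance (so $S_1 \ne S_2$ and $S_1 < S_2$) is used. A minor subtlety is that $A(J)$ might a priori be all of $E$, in which case there is no "base" region from $w$; but then the least cell $p$ of $E$ itself still lies in some $S_2$, and its partner $p'$ is strictly smaller, so $p'$ would lie outside $E$ or violate minimality — actually one checks $p' \in S_1 \subset E$ and $p' < p$, so $p'$ is a strictly smaller cell of $E$, contradicting $p$ being least; this forces the conclusion directly. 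I would present the argument cleanly as: order $E$ lexicographically, and prove by (strong) induction along this order that $u_p$ is determined by $w$ and $J$, using the repeat-cover property to reduce each cell in $A(J)$ to a lexicographically earlier cell.
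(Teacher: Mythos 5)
Your argument is correct and takes essentially the same route as the paper's proof: both proceed by induction along the lexicographic order on $E$, using $w$ on $E \setminus \rArea{J}$ as the base data and, for a point $t \in S_2$ with $(S_1,S_2) \in J$, translating by $m(S_1)-m(S_2)$ to a lexicographically strictly earlier point. The detail you flagged --- that $m(S_1) < m(S_2)$ because $S_1$ is the lexicographically minimal appearance and $S_1 \neq S_2$, so the translation strictly decreases the order --- is exactly the step the paper relies on as well.
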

\begin{proof}
Let $J \subset \cC_n(E) \times \cC_n(E)$ and $w \in \mathcal{A}^{E \setminus \rArea{J}}$ be given, and suppose $u,v \in \mathcal{A}^{E}$ satisfy
\renewcommand{\labelenumi}{(\alph{enumi})}
\begin{enumerate}
 \item $u|_{E \setminus \rArea{J}} = v|_{E \setminus \rArea{J}} = w$,
 \item $J$ is a repeat cover for $u$, and
 \item $J$ is a repeat cover for $v$.
\end{enumerate}
We will show that $u = v$. In fact, we will show that $u_t = v_t$ for each $t$ in $E$ by induction on $t$ (in the lexicographic ordering). Let $m = m(E)$ be the lexicographically minimal element of $E$. By the definition of repeat cover, $m$ is not in $\rArea{J}$. Therefore $u_{m} = v_{m} = w_{m}$ by (a). Now suppose for induction that $t$ is in $E$ and $u_s = v_s$ for all $s$ in $E$ with $s < t$. If $t$ is not in $\rArea{J}$, then $u_t = v_t = w_t$ by (a). Suppose that $t$ is in $\rArea{J}$. Then there exists $(S_1,S_2) \in J$ such that $t \in S_2$. Let $p = m(S_2) - m(S_1)$. Since $(S_1,S_2)$ is a repeat for both $u$ and $v$ (by (b) and (c)), we have that $u_t = u_{t-p}$ and $v_t = v_{t-p}$. Since $m(S_1) < m(S_2)$, we have that $t-p < t$, and therefore $u_{t-p} = v_{t-p}$ by the induction hypothesis. Hence, we have shown that $u_t = u_{t-p} = v_{t-p} = v_t$, which completes the proof.
\end{proof}


In the following lemma, for a pattern $u$ with shape $F_k$ and an $n$-repeat cover $J$ for $u$, we bound the cardinality of $F_k \setminus \rArea{J}$ in terms of the number of distinct $F_n$-patterns in $u$. The rough idea is that each $p$ in $F_k$ can be associated to a pattern in $W_n(u)$. The pattern associated to $p$ is either the lexicographically first occurrence of that pattern, in which case it contributes to $|W_n(u)|$, or it is a repeated $F_n$-pattern, in which case it contributes to $|\rArea{J}|$. This imprecise argument gives the idea that $|F_k| \approx |W_n(u)| + |\rArea{J}|$. The following lemma makes this idea precise by defining an injective map from a large subset of $F_k \setminus \rArea{J}$ into $W_n(u)$.
\begin{lemma} \label{Lemma:Nuggets}
Suppose $k > (2d+1)n$, $u \in \cA^{F_k}$ with $|W_n(u)|=j$, and $J$ is an $n$-repeat cover for $u$. Then
\begin{equation*}
 k^d - |\rArea{J}| \leq j\biggl(1+\frac{4 d n}{k}\biggr) .
\end{equation*}
\end{lemma}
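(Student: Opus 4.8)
The plan is to make precise the heuristic sketched before the lemma by constructing an injection $\phi$ from a large subset of $F_k \setminus A(J)$ into $W_n(u)$, and then bounding the size of the part of $F_k \setminus A(J)$ not in the domain of $\phi$. First I would restrict attention to the $n$-interior $\interior_{n}(F_k)$, or more precisely to the set of points $p \in F_k$ such that $F_n + p \subset F_k$; call this set $G$, so $|G| = (k-n+1)^d \geq k^d - dnk^{d-1} \cdot(\text{const})$. For such $p$, the cube $S = F_n + p$ is a legitimate element of $\cC_n(F_k)$, and we may look at the pattern $u|_S \in W_n(u)$.

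The key step is to define, for $p \in G \setminus A(J)$, the map $\phi(p)$ to be the pattern $u|_{S_p}$, where $S_p$ is the cube $F_n + p$. I claim $\phi$ is injective on $G \setminus A(J)$. Indeed, suppose $p \neq p'$ both lie in $G \setminus A(J)$ and $u|_{S_p} = u|_{S_{p'}}$. Then $(S_p, S_{p'})$ or $(S_{p'}, S_p)$ — whichever orders the lexicographically minimal occurrence first — would be an $n$-repeat in $u$ (the two cubes are distinct, carry the same pattern, and one can arrange the minimality condition). Since $J$ is a repeat cover, property (2) of the definition forces the non-minimal one of $\{S_p, S_{p'}\}$ to be contained in $A(J)$, meaning the corresponding base point lies in $A(J)$ — contradicting that both $p, p' \notin A(J)$. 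A small amount of care is needed here: the base point $m(S) - \1$ of a cube need not be its lexicographically minimal point in the way $A(J)$ is defined (which is a union of the cubes $S_2$ themselves, not their base points), so I would actually track membership via the cubes: the correct statement is that if $u|_{S_p} = u|_{S_{p'}}$ with $S_p \neq S_{p'}$, then at least one of $S_p, S_{p'}$ is a non-minimal occurrence and hence, by property (2), is a subset of $A(J)$; in particular its base point $p$ (or $p'$) lies in $A(J)$. This handles injectivity.

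Given injectivity of $\phi : G \setminus A(J) \to W_n(u)$, we get $|G \setminus A(J)| \leq |W_n(u)| = j$, hence
\begin{equation*}
 |G| - |A(J)| \leq |G \setminus A(J)| \leq j,
\end{equation*}
using $A(J) \subset F_k$ (so $|A(J) \cap G| \leq |A(J)|$, giving $|G| - |A(J)| \leq |G| - |A(J) \cap G| = |G \setminus A(J)|$). Therefore $k^d - |A(J)| = (|G| - |A(J)|) + (k^d - |G|) \leq j + (k^d - (k-n+1)^d)$. It remains to estimate $k^d - (k-n+1)^d \leq k^d - (k-n)^d \leq dn k^{d-1}$ and then to bound this by $j \cdot \frac{4dn}{k}$. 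This last conversion is where I must use the hypothesis $k > (2d+1)n$ together with a lower bound on $j$: since $u$ contains at least the $F_n$-patterns obtained by sliding within $F_k$, and in particular $|W_n(u)| \geq 1$ trivially but we need more. The cleanest route is to observe $j \geq k - n + 1 \geq k/2$ is false in general (a constant pattern has $j=1$), so instead I should fold the "error" region directly: rather than first passing to $G$ and losing $k^d - (k-n+1)^d$ points unconditionally, I would define $G$ more generously and absorb the boundary layer into $A(J)$ or into the count differently. Concretely: every point of $F_k$ either lies within $\ell_\infty$-distance $n$ of the boundary $\partial F_k$ — there are at most $2dn k^{d-1}$ such points, which is at most $\frac{4dn}{k} \cdot \frac{k^d}{2} \le \frac{4dn}{k}\cdot k^d$, hmm, I need the factor against $j$ not against $k^d$.

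\textbf{The main obstacle}, then, is precisely this final arithmetic conversion from an additive boundary error $O(dnk^{d-1})$ to the multiplicative bound $j \cdot \frac{4dn}{k}$; it genuinely requires a lower bound $j \geq k/(\text{something})$, which is \emph{false} for constant-like patterns. The resolution must be that when $j$ is small the pattern is highly periodic (this is exactly the content of Lemmas~\ref{Lemma:Periodicity2} and \ref{Lemma:Smallj}), so $A(J)$ is forced to be nearly all of $F_k$ and $k^d - |A(J)|$ is small directly — one would split into cases $j \geq k/2$ (where the naive boundary estimate works: $dnk^{d-1} = (dn/k)k^d \leq (2dn/k)(k/2) \cdot \text{stuff} \leq j \cdot 4dn/k$ after checking constants against $k>(2d+1)n$) and $j < k/2$ (where periodicity gives a strong repeat cover with $|A(J)|$ close to $k^d$, handled via the structure from Section~\ref{Sect:PeriodicBehavior}). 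I would expect the author's actual argument to avoid this case split by being slightly cleverer about which region to map — perhaps mapping from $F_k \setminus A(J)$ intersected with a shifted interior and accounting for each deleted boundary slab as contributing a bounded multiple of a distinct column of $F_n$-patterns — but the case analysis above is a safe fallback, and verifying the constant $4d$ goes through under $k > (2d+1)n$ is then a routine (if fiddly) computation.
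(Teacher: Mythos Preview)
Your injection $\phi:G\setminus A(J)\to W_n(u)$ is correct (modulo harmless corner-indexing), and you have correctly located the real difficulty: your route yields only the \emph{additive} bound
\[
k^d-|A(J)|\le j+\bigl(k^d-(k-n+1)^d\bigr),
\]
and the error term $k^d-(k-n+1)^d\sim dnk^{d-1}$ cannot be absorbed into $j\cdot 4dn/k$ unless $j\gtrsim k^d/4$. Your proposed rescue by case analysis does not close this gap. The periodicity results of Section~\ref{Sect:PeriodicBehavior} (Lemmas~\ref{Lemma:Periodicity2} and~\ref{Lemma:Smallj}) only apply when $j\le n$ (or $j\le n/2$); they say nothing about the range $n<j<k^d/4$, which for $d\ge 2$ is enormous. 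Even for $d=1$ there is a genuine gap $n<j<k/4$. So the ``safe fallback'' you describe is not actually safe.

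The paper avoids the case split entirely by making the excised region \emph{adaptive} rather than fixed. Instead of removing the corner region $F_k\setminus G$ (whose size is $\sim dnk^{d-1}$ regardless of $u$), the paper removes $d$ width-$n$ hyperplane slabs $H_i^{\ell(i)}$, one perpendicular to each coordinate axis, where each position $\ell(i)$ is chosen to \emph{minimize} $|H_i^{\ell(i)}\setminus A(J)|$. An averaging argument (the slabs $H_i^\ell$ cover $F_k\setminus A(J)$ with multiplicity at most $n$, and there are $k-n+1$ of them) gives
\[
|H_i^{\ell(i)}\setminus A(J)|\le \frac{n}{k-n+1}\,(k^d-|A(J)|).
\]
After deleting these $d$ slabs, every remaining point $p$ lies in a well-defined ``hyper-quadrant'' and can be assigned the $n$-cube $S_p\subset F_k$ having $p$ as the corner pointing toward the nearest corner of $F_k$; this makes $p\mapsto S_p$ injective on all of $F_k\setminus T$ (where $T=A(J)\cup\bigcup_i H_i^{\ell(i)}$), and your repeat-cover argument then gives $|F_k\setminus T|\le j$. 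The point is that the cost of the excision is now $\le \frac{dn}{k-n+1}(k^d-|A(J)|)$, i.e.\ proportional to $k^d-|A(J)|$ itself rather than to $k^d$, so one obtains
\[
k^d-|A(J)|\le j\Big/\Bigl(1-\tfrac{dn}{k-n+1}\Bigr),
\]
and the hypothesis $k>(2d+1)n$ converts this to the stated multiplicative bound. This averaging/adaptive-slab idea is the missing ingredient in your proposal.
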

\begin{proof}
Let $k >(2d+1)n$, and let $u$ be in $\cA^{F_k}$ with $|W_n(u)|=j$. Let $J$ be an $n$-repeat cover for $u$. For $i \in [1,d]$ and $\ell \in [1,k-n+1]$, define $H_i^{\ell}$ to be the width-$n$ hyperplane perpendicular to $e_i$ at position $\ell$:
\begin{equation*}
H_{i}^{\ell} = \{ v \in F_k : v_i \in [\ell, \ell+n-1] \}.
\end{equation*}
Note that $\bigcup_{\ell} (H_i^{\ell} \setminus \rArea{J}) = F_k \setminus \rArea{J}$, and for each point $p$ in $F_k$,
\begin{equation*}
 |\{ \ell : p \in H_i^{\ell} \}| \leq n.
\end{equation*}
Therefore (by Lemma \ref{Lemma:CombLemma}),
\begin{equation} \label{Eqn:SumHi}
 \sum_{\ell} |H_i^{\ell} \setminus \rArea{J}| \leq n |F_k \setminus \rArea{J}| = n (k^d - |\rArea{J}|).
\end{equation}
For each $i \in [1,d]$, choose $\ell(i)$ such that $|H_i^{\ell(i)} \setminus \rArea{J}| \leq |H_i^{\ell} \setminus \rArea{J}|$ for all $\ell$. 
Using that the minimum of a finite set of real numbers is less than or equal to the average and then applying (\ref{Eqn:SumHi}), we have that
\begin{equation} \label{Eqn:Hineq}
|H_i^{\ell(i)} \setminus \rArea{J}| \leq \frac{1}{k-n+1} \sum_{\ell} |H_i^{\ell} \setminus \rArea{J}| \leq  \frac{n(k^d-|\rArea{J}|)}{k-n+1}.
\end{equation}

For ease of notation, define the set
\begin{equation*}
T =   \rArea{J} \cup \biggl( \bigcup_i H_i^{\ell(i)} \biggr);
\end{equation*}
see Figure \ref{AunionHyperplanes}.
\begin{figure}
 \centering
\includegraphics[scale=.5]{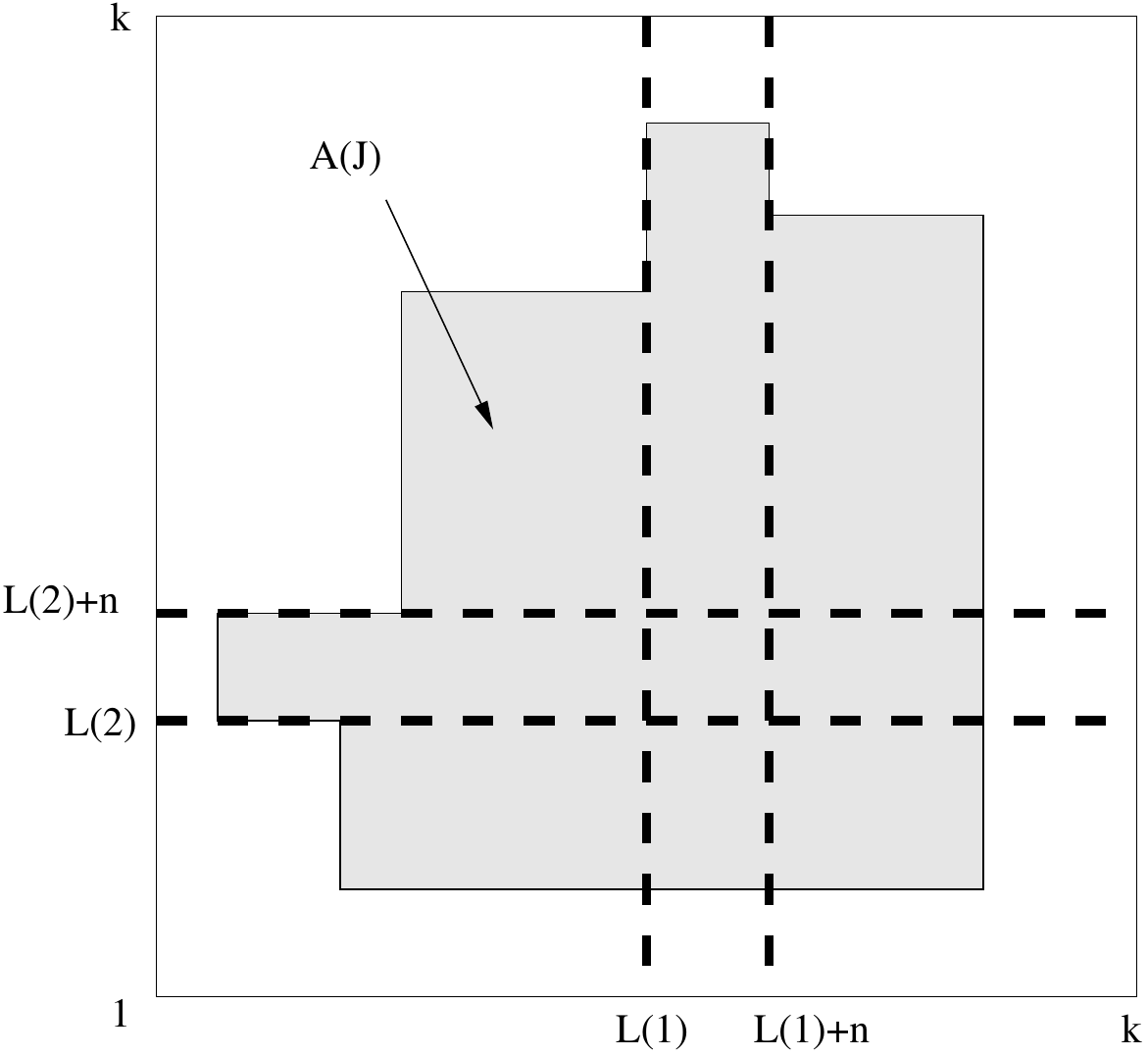}
\caption{An example of the set $T = A(J) \cup (H_1^{\ell(1)} \cup H_2^{\ell(2)})$. The set $A(J)$ appears shaded. Here $\ell(i)$ is labeled $L(i)$ for each $i = 1,2$, and the sets $H_i^{\ell(i)}$ are bounded by dashed lines.}
\label{AunionHyperplanes}
\end{figure}
We now claim that there is an injection from $F_k \setminus T$ into $W_n(u)$, and therefore 
\begin{equation} \label{Eqn:Jake}
\bigl|F_k \setminus T \bigr| \leq |W_n(u)| = j.
\end{equation}
 Let us prove the claim. For each $i \in [1,d]$, let $R_i^+$ ($R_i^-$) be the ``half-hypercube'' on the ``$+$'' (``$-$'') side of the thickened hyperplane $H_i^{\ell(i)}$, \textit{i.e.}, $R_i^{+} = \{ p \in F_k : p_i > \ell(i)+n-1 \}$ and $R_i^{-} = \{ p \in F_k  : p_i < \ell(i) \}$ (so that $F_k = R_i^{-} \sqcup H_i^{\ell(i)} \sqcup R_i^+$, as in Figure \ref{Hyperplane_pic}). 
For each $w$ in $\{+,-\}^d$, let $R(w)$ be the ``hyper-quadrant'' $R(w) = \bigcap_i R_i^{w_i}$. Also, for an $n$-cube $S$ in $\cC_n(F_k)$, let $S(w)$ be the corner in $S$ specified by $w$: if $S = F_n + v$, then $S(w)$ is the point whose $i$-th coordinate satisfies
\begin{equation*}
 S(w)_i = v_i + \left\{ \begin{array}{ll}
                       1, & \text{ if } w_i = - \\
                       n, & \text{ if } w_i = +.
                      \end{array}
             \right.
\end{equation*}

\begin{figure}
 \centering
 \includegraphics[scale=.5]{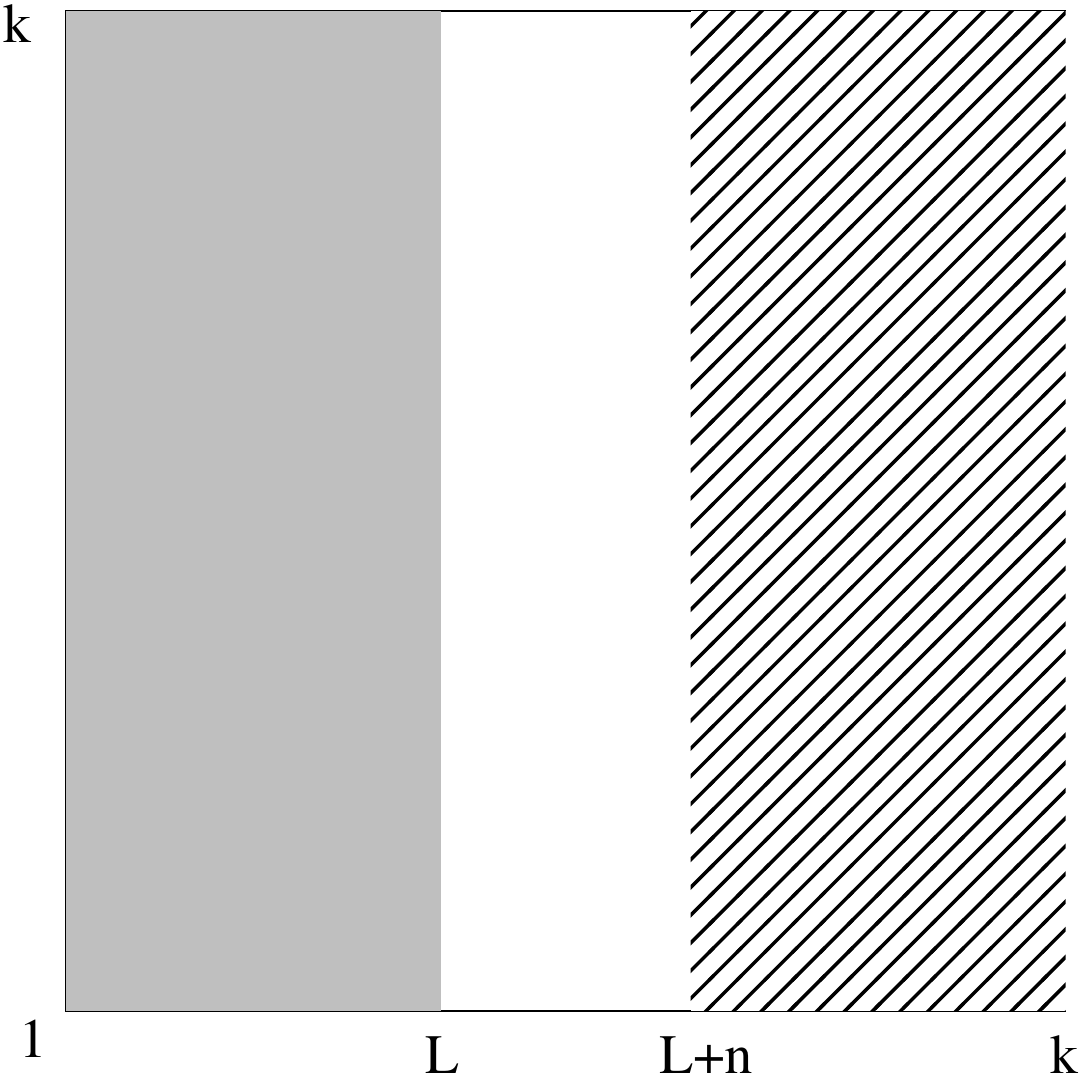}
 \caption{A decomposition of $F_k$ as $R_1^{-} \sqcup H_1^{\ell(1)} \sqcup R_1^{+}$; here $R_1^{-}$ appears shaded, $R_1^+$ appears with stripes, $H_1^{\ell(1)}$ appears in between, and $L = \ell(1)$.}
 \label{Hyperplane_pic}
\end{figure}

Now for $p$ in $R(w)$, let $S_p$ be the $n$-cube in $\cC_n(F_k)$ such that $S(w) = p$. Since each $H_i^{\ell(i)}$ has width $n$ (in the $i$-th direction), we have that the map $F_k \setminus (\cup_i H_i^{\ell(i)}) \to \cC_n(F_k)$ given by $p \mapsto S_p$ is an injection. Finally, we have that the map from $F_k \setminus T$ into $W_n(u)$ given by $p \mapsto u|_{S_p}$ is an injection. Indeed, if $p,q \notin T$ and $u|_{S_p} = u|_{S_q}$, then $S_p = S_q$ (since $J$ is a repeat cover and $p,q \notin \rArea{J}$), and therefore $p = q$ (since $p \mapsto S_p$ is injective).

By (\ref{Eqn:Jake}), union bound, and (\ref{Eqn:Hineq}) we have that
\begin{align*}
j \geq \bigl|F_k \setminus T \bigr| & = k^d - |\rArea{J}| - \biggl|\bigcup_i H_i^{\ell(i)} \setminus \rArea{J} \biggr| \\
& \geq k^d - |\rArea{J}| - \sum_i |H_i^{\ell(i)} \setminus \rArea{J}| \\
& \geq k^d - |\rArea{J}| - \frac{d n (k^d - |\rArea{J}|)}{k-n+1} \\
& = (k^d-|\rArea{J}|)\biggl( 1 - \frac{d n }{k-n+1} \biggr).
\end{align*}
Dividing by $1- \frac{dn}{k-n+1}$ gives 
\begin{equation} \label{Eqn:jIsBigIneq}
k^d-|\rArea{J}| \leq j \biggl( \frac{1}{1 - \frac{d n }{k-n+1} } \biggr).
\end{equation}
Since $k > (2d+1)n$, we have that $n -1 < k/2$ and $dn/(k-n+1) < 1/2$. Using these facts, along with (\ref{Eqn:jIsBigIneq}) and the elementary fact that if $0 \leq x \leq 1/2$, then $\frac{1}{1-x} \leq 1 + 2x$, we see that
\begin{align*}
k^d-|\rArea{J}| \leq j \biggl( \frac{1}{1 - \frac{d n }{k-n+1} } \biggr) \leq j \biggl( 1 + \frac{2 d n}{k-n+1} \biggr) \leq j \biggl( 1+ \frac{ 4 d n}{k} \biggr),
\end{align*}
as desired.
\end{proof}

\subsection{Repeat covering lemmas} \label{Sect:RepeatCovers}

The purpose of this section is to construct ``efficient'' repeat covers $J$, where ``efficient'' means that $|J|$ is small enough for our purposes. Consider $u$ in $\cA^{F_k}$ with $|W_n(u)|=j$. The case $j \leq n/2$ is handled by Lemma \ref{Lemma:Smallj}, and we will not deal with that case in this section. For $j \geq n/2$, we construct a repeat cover of $u$ by decomposing $F_k$ into three regions and covering the repeated $F_n$-patterns in each region separately. The decomposition of $F_k$ that we use depends on $j$ as follows. If $\ell \geq 1$ is an integer such that $n^{\ell}/5^{d} \leq j < n^{\ell+1}/3^{d}$, then we decompose $F_k$ into the following three regions: i) a (relatively) small neighborhood of the $\ell$-skeleton of $F_k$, ii) the $n$-neighborhood of the $\ell$-skeleton of $F_k$ (minus the first region), and iii) the complement of the first two regions. 
These three regions are handled separately with help from Lemmas \ref{Lemma:GeneralCoding}, \ref{Lemma:Place2}, and \ref{Lemma:CoverInterior}, respectively. In Lemma \ref{Lemma:ThreePieces}, we combine these lemmas to produce a repeat cover $J$ for $u$ with an upper bound on $|J|$. The culmination of this section is Lemma \ref{Lemma:FinalCoveringLemma}, which gives an asymptotic upper bound on $|J|$ under some conditions on the asymptotic relationship between the parameters involved in constructing these repeat covers. The purpose of all of this work is that Lemma \ref{Lemma:FinalCoveringLemma} plays an important role in the proof of Theorem \ref{Thm:EmptinessIntro} given in Section \ref{Sect:EmptinessProof}.

\subsubsection{Covering sets near a face} \label{Sect:CoverNearAFace}

The following lemma allows us to find efficient covers of sets near a face of dimension at least $1$. For example, see Figure \ref{NearAnEdge_pic}. We achieve this efficiency by reducing the problem to a one-dimensional covering problem and applying Lemma \ref{Lemma:IntervalCoveringLemma}.
\begin{lemma} \label{Lemma:GeneralCoding}
 Suppose $E = F_k(\mathcal{I},s)$ is a face of dimension $\ell \geq 1$ in $F_k$, and let $R = B(E,n) \cap F_k$. Further suppose that $\cC \subset \cC_n(F_k)$, and let $U = \cup_{S \in \cC} (S \cap R)$. Then there exists $\cC' \subset \cC$ such that $U = \cup_{S \in \cC'} (S\cap R)$ and $|\cC'|\leq 2 |U|/n$.
\end{lemma}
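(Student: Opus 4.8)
The plan is to reduce to the one-dimensional interval-covering problem of Lemma~\ref{Lemma:IntervalCoveringLemma}, using that a face of dimension $\ell\geq 1$ has a free coordinate direction. Since $\dim(E)=\ell\geq 1$, the set $[1,d]\setminus\mathcal{I}$ is nonempty; fix $i_0\in[1,d]\setminus\mathcal{I}$ and write each point of $\Z^d$ as a pair $(q,t)$, where $t$ is its $i_0$-th coordinate and $q$ collects the other $d-1$ coordinates. The first and most delicate step is the geometric observation that in these coordinates $R=B(E,n)\cap F_k$ is a product $\widehat{R}\times[1,k]$ for a suitable $\widehat{R}\subseteq\Z^{d-1}$. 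This is because $E=F_k(\mathcal{I},s)$ is a product of intervals, so the $\ell_\infty$-distance from a point to $E$ is the maximum over coordinates of the one-dimensional distances to the factors of $E$; the $i_0$-th factor of $E$ is all of $[1,k]$, so that one-dimensional distance vanishes throughout $F_k$, and hence membership of a point of $F_k$ in $B(E,n)$ imposes no restriction on its $i_0$-coordinate. In the same coordinates every $n$-cube $S$ factors as $S=\widehat{S}\times I_S$, where $I_S\subseteq[1,k]$ is an interval of length $n$ (using $S\subseteq F_k$); consequently $S\cap R=(\widehat{S}\cap\widehat{R})\times I_S$.

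Next I would slice $U$ by the transverse coordinate. For $q\in\widehat{R}$ put $\cC_q=\{S\in\cC:q\in\widehat{S}\}$ and $U_q=\{t:(q,t)\in U\}$. The factorization above gives $U=\bigsqcup_{q\in\widehat{R}}\bigl(\{q\}\times U_q\bigr)$ and, for each such $q$, $U_q=\bigcup_{S\in\cC_q}I_S$, a finite union (finite since $\cC\subseteq\cC_n(F_k)$) of length-$n$ intervals in $\Z$. Applying Lemma~\ref{Lemma:IntervalCoveringLemma} to the family $\{I_S:S\in\cC_q\}$, indexed by $\cC_q$, yields $\cC'_q\subseteq\cC_q$ with $\bigcup_{S\in\cC'_q}I_S=U_q$ such that every integer lies in at most two of the intervals $I_S$, $S\in\cC'_q$. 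Set $\cC'=\bigcup_{q\in\widehat{R}}\cC'_q\subseteq\cC$.

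Finally I would check the two conclusions. The inclusion $\bigcup_{S\in\cC'}(S\cap R)\subseteq U$ is clear since $\cC'\subseteq\cC$; conversely, if $(q,t)\in U$ then $q\in\widehat{R}$ and $t\in U_q=\bigcup_{S\in\cC'_q}I_S$, so $t\in I_S$ for some $S\in\cC'_q$, whence $q\in\widehat{S}\cap\widehat{R}$ and $(q,t)\in S\cap R$ with $S\in\cC'$; thus $\bigcup_{S\in\cC'}(S\cap R)=U$. For the count, fix $q\in\widehat{R}$: since $|I_S|=n$ for all $S$ and each integer lies in at most two of the $I_S$ with $S\in\cC'_q$, we get $n\,|\cC'_q|=\sum_{S\in\cC'_q}|I_S|\leq 2\bigl|\bigcup_{S\in\cC'_q}I_S\bigr|=2|U_q|$, so $|\cC'_q|\leq 2|U_q|/n$. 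Summing over $q\in\widehat{R}$ and using $|\cC'|\leq\sum_{q}|\cC'_q|$ together with $|U|=\sum_{q}|U_q|$ gives $|\cC'|\leq\frac{2}{n}\sum_{q}|U_q|=\frac{2|U|}{n}$. The only real difficulty I anticipate is the geometric factorization of $R$ in the first step; after that, everything is bookkeeping. It is worth noting that the last estimate only uses the inequality $|\cC'|\leq\sum_q|\cC'_q|$ and not equality — a single cube may be chosen in $\cC'_q$ for many different $q$, but that only decreases $|\cC'|$.
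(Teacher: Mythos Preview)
Your proof is correct and follows essentially the same approach as the paper: pick a free coordinate direction of the face, slice along lines in that direction, apply Lemma~\ref{Lemma:IntervalCoveringLemma} on each slice, and sum. The only organizational difference is that the paper assigns each cube $S$ to the single line through its minimal corner (via the map $\phi(S)$), so the per-line subfamilies $\cC(L)$ are pairwise disjoint and the final count is an equality, whereas you let a cube appear in $\cC_q$ for every $q\in\widehat{S}\cap\widehat{R}$ and then absorb the overcount with $|\cC'|\leq\sum_q|\cC'_q|$; both routes yield the same bound $|\cC'|\leq 2|U|/n$.
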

\begin{figure}
 \centering
 \includegraphics[scale=.5]{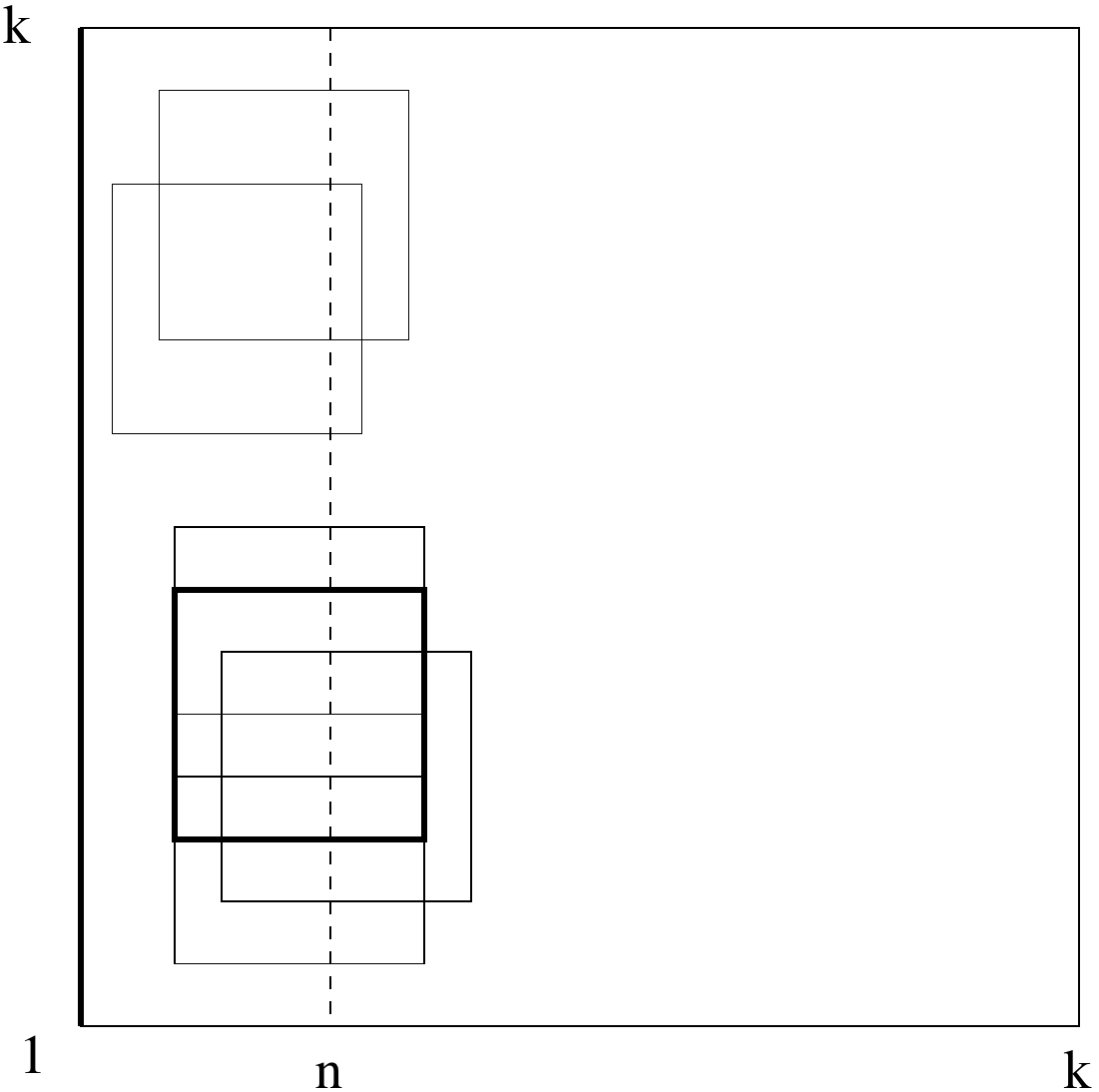}
 \caption{A union of $n$-cubes in $F_k$, near the face $E = F_k(\mathcal{I},s)$, which is chosen to be the left edge. Notice that the bold $n$-cube may be removed without changing the union of the collection.}
 \label{NearAnEdge_pic}
\end{figure}
\begin{proof}
 Let $E$, $R$, $\cC$ and $U$ be as above. Since $E$ has dimension at least $1$, we may assume without loss of generality that $1 \notin \mathcal{I}$. (If $\mathcal{I}$ does not satisfy this condition, then we may rotate $F_k$ so that it does.) Let $L_n$ be the line segment of $F_n$ in direction $e_1$ passing through $\1$, \textit{i.e.}, $L_n = \{ \1 + me_1 : m \in [0,n-1]\}$. For each $S$ in $\cC$, we define the line segment $\phi(S) = L_n + v$, where $v$ is the unique vector satisfying $S = F_n + v$.

 Let $L$ be a line in direction $e_1$ that intersects $R$, and let 
\begin{equation*}
 F =  \bigcup_{S \in \cC} \phi(S) \cap L.
\end{equation*}
By definition, $F$ is contained in the line $L$, and $\{\phi(S) \cap F : S \in \cC\}$ is a cover of $F$ by intervals. By Lemma \ref{Lemma:IntervalCoveringLemma}, we obtain that there is a subset $\cC(L)$ of $\cC$ such that 
\begin{itemize}
 \item $F \subset \bigcup_{S \in \cC(L)} \phi(S)$,
 \item $\phi(S) \subset L$ for each $S \in \cC(L)$, and
 \item for each point $p$ in $F$, we have $|\{ S \in \cC(L) : p \in \phi(S) \}| \leq 2$.
\end{itemize}
Now let 
\begin{equation*}
 \cC' = \bigcup_{L} \cC(L).
\end{equation*}
where the union is over all lines $L$ in direction $e_1$ such that $L \cap R \neq \varnothing$.

As $\cC' \subset \cC$, we have $\bigcup_{S \in \cC'} S \cap R \subset U$. To show the reverse containment, let $x \in U$. By definition of $U$, there exists $S$ in $\cC$ such that $x \in S \cap R$. Let $L$ be the line in direction $e_1$ containing $\phi(S)$. By construction of $\cC(L)$, we have that $\phi(S) \subset \cup_{S' \in \cC(L)} \phi(S')$, and therefore $S \subset \cup_{S' \in \cC(L)} S'$. Hence there exists $S'$ in $\cC'$ such that $x \in S'$. Since $x \in U$ was arbitrary, we conclude that
\begin{equation*}
 U = \bigcup_{S \in \cC'} S \cap R.
\end{equation*}
Furthermore, for each $p$ in $U$, we have that $|\{S \in \cC' : p \in \phi(S)\}| \leq 2$. Therefore (using Lemma \ref{Lemma:CombLemma})
\begin{equation*}
 |\cC'| n =  \sum_{S \in \cC'} |\phi(S)| \leq 2 \biggl| \bigcup_{S \in \cC'} \phi(S) \biggr| \leq 2 |U|.
\end{equation*}
Dividing by $n$, we obtain that $|\cC'| \leq 2 |U|/n$, as desired.
\end{proof}

\subsubsection{Covering regions between faces and interiors} \label{Sect:CoverBetween}

The goal of this subsection is to prove Lemma \ref{Lemma:Place2}, which will be used in Lemma \ref{Lemma:ThreePieces} to construct efficient repeat covers in regions of the form $(F_k \cap B(F_{k,\ell},n)) \setminus B(F_{k,\ell},r)$, with $r < n$. In order to do so, we will require some additional terminology.

Let $E = F_k(\mathcal{I},s)$ be a face of $F_k$ (with notation as in Section \ref{Sect:Rectangles}). For $p$ in $F_k$ and $i$ in $\mathcal{I}$, let $\pi_{s,i}(p)$ be the point in $F_k$ defined by
\begin{equation*}
 \pi_{s,i}(p)_t = \left\{ \begin{array}{ll}
                       p_t, & \text{ for } t \neq i \\
                       s(i), & \text{ for } t=i.
                      \end{array}
              \right.
\end{equation*}
Note that a single $\pi_{s,i}$ does not necessarily project all points to $E$, because it only projects along a single direction (see Figure \ref{Projections_pic}). Let $\Line(p,q)$ denote the line segment in $F_k$ from $p$ to $q$. 
\begin{figure}
 \centering
 \includegraphics[scale=.5]{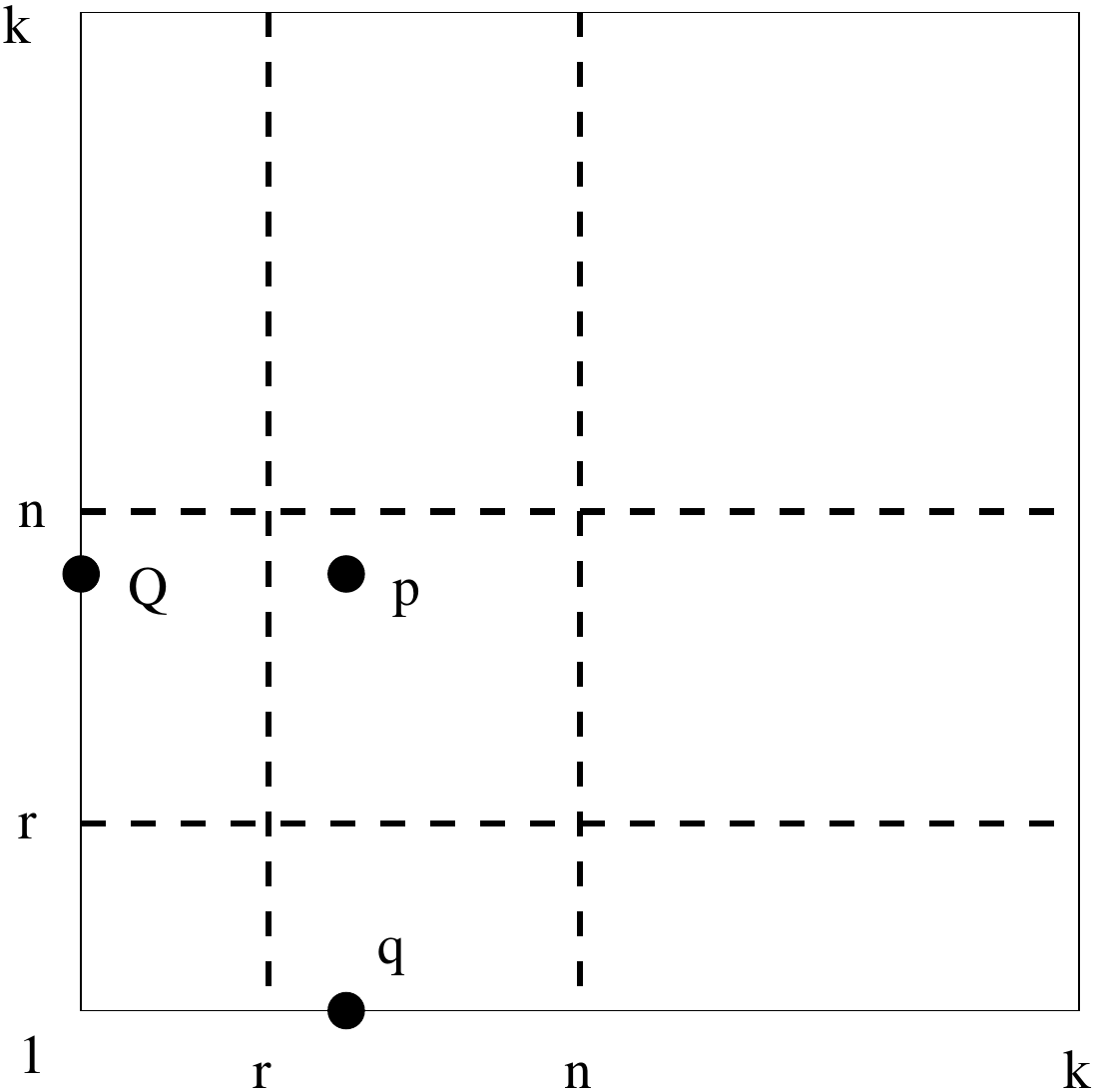}
 \caption{Here we choose the face $E = F_k(\mathcal{I},s)$, with $\mathcal{I} = \{1,2\}$ and $s \equiv 1$, \textit{i.e.} $E$ consists of the point $\1$. For the indicated point $p$ (which lies in $( F_k \cap B(E,n) ) \setminus B(E,r)$), we have $\pi_{s,1}(p) = Q$ and $\pi_{s,2}(p) = q$.}
 \label{Projections_pic}
\end{figure}

Given a set $T \subset F_k$ and a face $E = F_k(\mathcal{I},s)$, we say that $p \in T$ is \textit{$(E,T)$-necessary} if for each $i \in \mathcal{I}$, we have that $\Line(p,\pi_{s,i}(p)) \cap T= \{p\}$. For example, see Figure \ref{NecessaryPts_pic}.
\begin{figure}
 \centering
 \includegraphics[scale=.5]{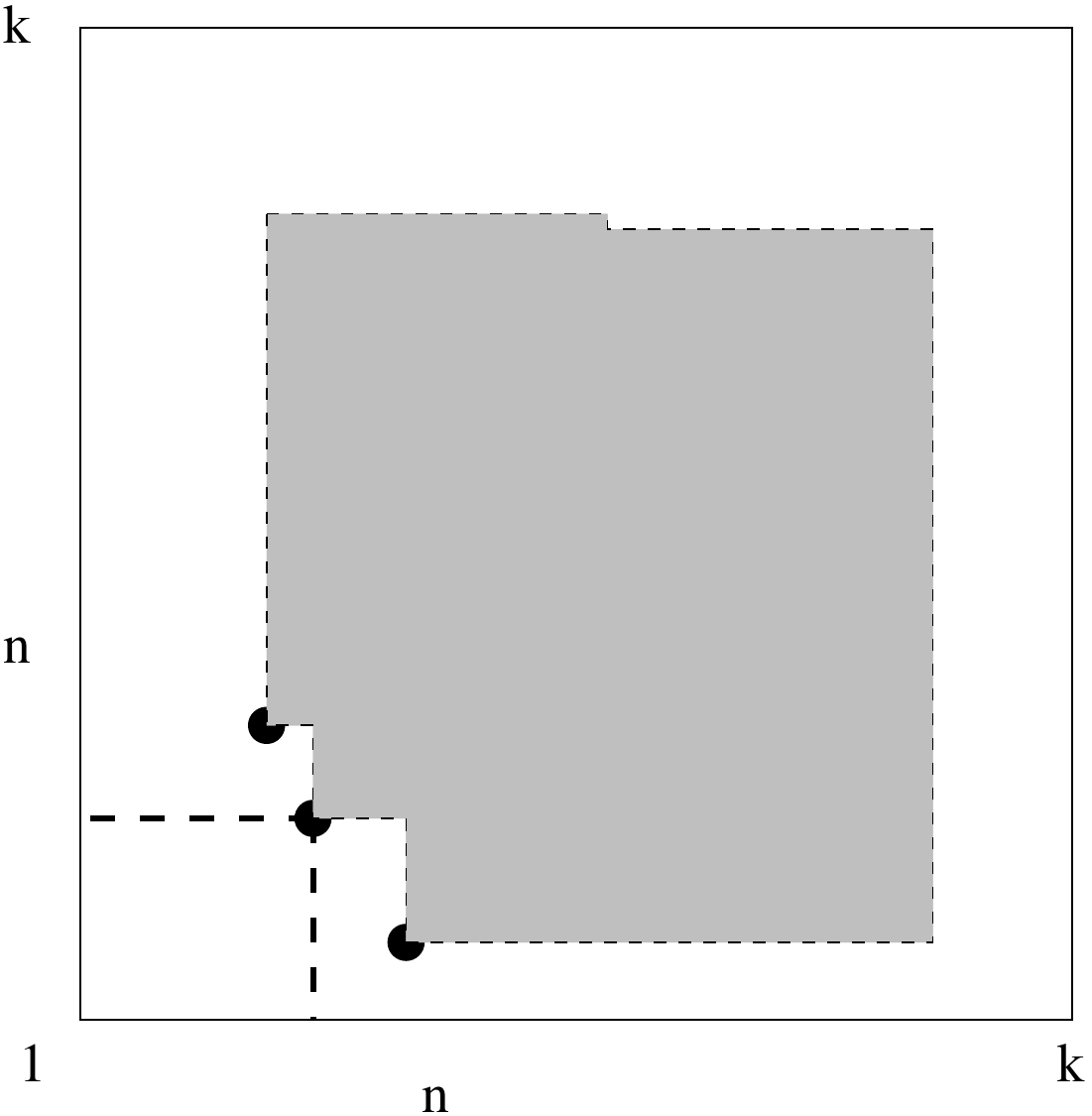}
 \caption{Here we choose the face $E = F_k(\mathcal{I},s)$, with $\mathcal{I} = \{1,2\}$ and $s \equiv 1$, \textit{i.e.} $E$ consists of the point $\1$. If $T$ is the shaded region, then there are three $(E,T)$-necessary points, marked with bold dots. The dashed lines demonstrate why the middle point is an $(E,T)$-necessary point.}
 \label{NecessaryPts_pic}
\end{figure}

 Given a set $T \subset F_k$ and $\ell \in [0,d]$, we say that $p \in T$ is $(\ell,T)$-necessary if $p$ is $(E,T)$-necessary for some face $E$ of dimension $\ell$. 
Observe that if $p$ is an $(\ell,T)$-necessary point in $F_k \setminus B(F_{k,\ell},r)$, then there exist a face $F_k(\mathcal{I},s)$ of dimension $\ell$ and $i \in \mathcal{I}$ such that $\Line(p,\pi_{s,i}(p)) \cap T = \{p\}$ and $|\Line(p,\pi_{s,i}(p)) \setminus \{p\}| \geq r$.

The following lemma bounds from above the number of $(\ell,T)$-necessary points contained in $(F_k \cap B(F_{k,\ell},n)) \setminus B(F_{k,\ell},r)$. This lemma is used in Lemma \ref{Lemma:ThreePieces} to find efficient covers of $\rArea{J}$ in the region $(F_k \cap B(F_{k,\ell},n) ) \setminus B(F_{k,\ell},r)$. 
\begin{lemma} \label{Lemma:Place2}
 Suppose $n < k$ and $T \subset F_k$. Then for any $\ell \in [0,d-1]$ and $r \in [1,n)$, the number of $(\ell,T)$-necessary points in $(F_k \cap B(F_{k,\ell},n)) \setminus B(F_{k,\ell},r)$ is less than $d (k^d - |T|)/r $. 
\end{lemma}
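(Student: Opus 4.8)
The plan is to bound the number of $(\ell,T)$-necessary points in the region $R := (F_k \cap B(F_{k,\ell},n)) \setminus B(F_{k,\ell},r)$ by charging each such point against at least $r$ distinct points of $F_k \setminus T$, in such a way that no point of $F_k \setminus T$ is charged by too many necessary points. Since there are $c_{d,\ell}$ faces of dimension $\ell$ (and, more usefully, for each of the $d-\ell$ free directions there is only a bounded amount of overlap), if each point of $F_k\setminus T$ receives at most $d$ charges total, then $r \cdot \#\{(\ell,T)\text{-necessary points in }R\} \le d\,|F_k \setminus T| = d(k^d - |T|)$, which gives the claimed bound. Recall the observation stated just before the lemma: if $p \in R$ is $(\ell,T)$-necessary, then there exist a face $E = F_k(\mathcal{I},s)$ of dimension $\ell$ and an index $i \in \mathcal{I}$ with $\Line(p,\pi_{s,i}(p)) \cap T = \{p\}$ and $|\Line(p,\pi_{s,i}(p)) \setminus \{p\}| \ge r$.

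First I would, for each such $p$, fix one such pair $(E,i) = (E(p), i(p))$, and let $\Gamma_p = \Line(p,\pi_{s,i}(p)) \setminus \{p\}$; this is a segment of at least $r$ points, all lying in $F_k$, moving in the free-to-restricted direction $e_i$ toward the face $E$, and by necessity $\Gamma_p \cap T = \varnothing$, i.e. $\Gamma_p \subset F_k \setminus T$. So each necessary point $p$ is assigned a set $\Gamma_p$ of at least $r$ points of $F_k \setminus T$. The key claim is a bounded-multiplicity statement: each point $q \in F_k \setminus T$ lies in $\Gamma_p$ for at most $d$ choices of $p$. Granting this, Lemma \ref{Lemma:CombLemma} (applied with $S = F_k\setminus T$, the collection $\{\Gamma_p\}$, and $c = d$) gives $r \cdot \#\{p\} \le \sum_p |\Gamma_p| \le d\,|F_k\setminus T| = d(k^d - |T|)$, hence $\#\{p\} \le d(k^d-|T|)/r$; since the bound is strict we just need a strict inequality somewhere, which comes from $|\Gamma_p| \ge r$ being used as a lower bound while the target is a strict `$<$' — more carefully, I would note that in fact $|\Gamma_p|\ge r$ and the points $p$ themselves are also in $F_k$ but not in any $\Gamma_{p'}$ beyond the allowed count, giving a little slack; alternatively one argues $\sum_p |\Gamma_p| \ge r\cdot\#\{p\}$ and $\sum_p|\Gamma_p|\le d(k^d-|T|)$ with the first inequality strict unless $\#\{p\}=0$, in which case the conclusion is trivial. (The cleanest route: if $\#\{p\} \ge 1$ pick any necessary $p_0$; then $p_0 \in F_k\setminus T$ is not in any $\Gamma_{p'}$... — I'd iron out this $\le$ vs $<$ point in the write-up, it is not the substance.)

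The substance is the bounded-multiplicity claim, and that is the step I expect to be the main obstacle. Fix $q \in F_k \setminus T$ and suppose $q \in \Gamma_p$ for some necessary $p$ with chosen pair $(E,i)$, $E = F_k(\mathcal{I},s)$. Then $q$ and $p$ agree in all coordinates except the $i$-th, $q_i$ lies strictly between $p_i$ and $s(i)$ on the segment, and $p$ is the endpoint of $\Gamma_p$ farthest from the face in direction $e_i$ — so $p$ is recoverable from $q$ together with the direction $i$ as the point obtained by travelling along $e_i$ away from $s(i)$ until one first hits $T$ (the next point past $p$ in that direction is in $T$, since $\Line(p,\pi_{s,i}(p))\cap T=\{p\}$ would be contradicted... actually $p$ itself is the point just before re-entering $T$, because $p\notin T$ but $p$ is $(\ell,T)$-necessary forces the point adjacent to $p$ on the far side — here one uses that $p \in R$ means $p \notin B(F_{k,\ell},r)$ so there is "room", and that $p$'s necessity is witnessed precisely by this segment). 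Thus, given $q$, the data of the direction $i \in [1,d]$ determines $p$ uniquely. Since $i$ ranges over at most $d$ values, $q$ lies in at most $d$ of the sets $\Gamma_p$. I would be careful here about two subtleties: (a) whether $p$ is really the \emph{first} point where $T$ is re-entered when walking from $q$ in direction $\pm e_i$ away from the hyperplane $\{x_i = s(i)\}$ — this needs the defining property $\Line(p,\pi_{s,i}(p))\cap T = \{p\}$ together with the fact that the points of $F_k$ just beyond $p$ (away from the face) must belong to $T$, which follows because $p$ was the chosen endpoint; and (b) that different faces $E, E'$ of dimension $\ell$ sharing the direction $i$ can't both contribute $p\ne p'$ with the same $q,i$ — they can't, since once $i$ and the sign (determined by which side $s(i)$ is on, i.e. by $q$) are fixed, the walk is determined. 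Putting these together yields multiplicity $\le d$ and completes the proof.
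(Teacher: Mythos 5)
Your proposal matches the paper's proof essentially verbatim: assign to each necessary point $p$ the $T$-free segment $L_p=\Line(p,\pi_{s,i}(p))\setminus\{p\}$ of length at least $r$, observe that each $q\in F_k$ lies in at most $d$ such segments (one per cardinal direction, since given $q$ and $i$ the point $p$ is recoverable as the first point of $T$ met when walking from the face through $q$), and apply Lemma \ref{Lemma:CombLemma}. One small correction to your write-up: by definition an $(E,T)$-necessary point $p$ lies \emph{in} $T$ (not outside it), which is precisely what makes $p$ determined by $q$ and $i$ and rules out two opposite-signed candidates for the same $i$; also, the paper's own proof only concludes the non-strict inequality $|\mathcal{N}|\le d(k^d-|T|)/r$, so your fussing over $\le$ versus $<$ applies equally to the paper and is harmless in every application.
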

\begin{proof}
 Let $n$, $k$, $T,\ell$ and $r$ be as above. Let $\mathcal{N}$ be the set of $(\ell,T)$-necessary points in $(F_k \cap B(F_{k,\ell},n) ) \setminus B(F_{k,\ell},r)$. If $p$ is in $\mathcal{N}$, then there exist a face $F_k(\mathcal{I},s)$ of dimension $\ell$ and $i \in \mathcal{I}$ such that $\Line(p,\pi_{s,i}(p)) \cap T = \{p\}$ and $|\Line(p,\pi_{s,i}(p)) \setminus \{p\}| \geq r$. Arbitrarily choosing such a pair $(F_k(\mathcal{I},s),i)$ for each $(\ell,T)$-necessary point $p$, we let $L_p = \Line(p,\pi_{s,i}(p)) \setminus \{p\}$. Hence $L_p \subset F_k \setminus T$ and $|L_p| \geq r$. Note that for $q$ in $F_k$,
\begin{equation*}
 |p \in \mathcal{N} : q \in L_p| \leq d,
\end{equation*}
by the definition of necessary points and the fact that there are only $d$ cardinal directions in $\Z^d$. Thus (by Lemma \ref{Lemma:CombLemma}), we have that
\begin{equation*}
 |\mathcal{N}| r \leq \sum_{p \in \mathcal{N}} |L_p| \leq d |F_k \setminus T| = d( k^d - |T|),
\end{equation*}
which shows that $|\mathcal{N}| \leq d(k^d - |T|)/r$.
\end{proof}
%

\subsubsection{Covering the interiors of hypercubes} \label{Sect:CoverInterior}

In the following lemma, we produce an efficient sub-cover of the interior of a hypercube under the condition that the interior is ``covered densely," in some sense.
\begin{lemma} \label{Lemma:CoverInterior}
Let $F \subset \Z^{d_0}$ be a hypercube with side-length $k$ in dimension $d_0$. Suppose $\cC \subset \cC_n(F)$ has the property that for each $p$ in $\interior_n(F)$, there exists $S \in \cC$ such that if $c$ is the center of $S$, then $\rho(p,c) \leq n/6$. Then there exists $\cC' \subset \cC$ such that
\begin{enumerate}
 \item $\interior_n(F) \subset \bigcup_{S \in \cC'} S$, and
 \item $|\cC'| \leq (2k/n)^{d_0}$.
\end{enumerate}
\end{lemma}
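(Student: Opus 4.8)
I am asked to prove Lemma \ref{Lemma:CoverInterior}: given a hypercube $F$ of side length $k$ in $\Z^{d_0}$ and a collection $\cC \subset \cC_n(F)$ such that every point of $\interior_n(F)$ is within $\ell_\infty$-distance $n/6$ of the center of some cube in $\cC$, one can extract $\cC' \subset \cC$ covering $\interior_n(F)$ with $|\cC'| \le (2k/n)^{d_0}$. The plan is to carry out a standard "grid-selection" argument: lay down a coarse lattice of spacing roughly $n/3$ inside $F$, and for each lattice point pick (if it exists) one cube of $\cC$ whose center is near that lattice point. The density hypothesis guarantees such a cube exists near enough lattice points, and the bound on the number of lattice points gives the cardinality bound, while the geometry of $n$-cubes versus the spacing gives the covering property.

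**Key steps.** First I would fix a spacing parameter $s$; the natural choice forced by the hypothesis is $s = \lceil n/3 \rceil$ (or something comparable), so that a ball of radius $n/6$ around a lattice point, together with the fact that an $n$-cube centered at $c$ contains $B(c, n/2 - 1) \supset B(c, n/3)$, will let one lattice cell be covered by a single selected cube. Concretely, consider the set $\Lambda$ of points of the form (lower corner of $F$) $+ s \cdot v$ for $v \in \Z_{\ge 0}^{d_0}$, intersected with a slightly enlarged copy of $F$; the number of such points is at most $(k/s + 1)^{d_0}$, and with $s \approx n/3$ this is at most $(3k/n + 1)^{d_0} \le (2k/n)^{d_0}$ once $k/n$ is not too small — I would need to check the constant works, shrinking $s$ or padding the estimate as needed (the slack between $3k/n+1$ and $4k/n$ is what makes this routine). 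For each $\lambda \in \Lambda$ whose distance-$n/6$ ball meets $\interior_n(F)$ — pick a witness $p_\lambda \in \interior_n(F)$ with $\rho(p_\lambda,\lambda)$ small — apply the hypothesis at $p_\lambda$ to get $S_\lambda \in \cC$ with center $c_\lambda$ satisfying $\rho(p_\lambda, c_\lambda) \le n/6$; let $\cC'$ be the set of all chosen $S_\lambda$. Then $|\cC'| \le |\Lambda| \le (2k/n)^{d_0}$. Second, I would verify the covering property: given any $q \in \interior_n(F)$, round $q$ to the nearest lattice point $\lambda \in \Lambda$, so $\rho(q,\lambda) \le s/2 \approx n/6$; then $\rho(q, c_\lambda) \le \rho(q,\lambda) + \rho(\lambda, p_\lambda) + \rho(p_\lambda, c_\lambda)$, and by carefully choosing the constants (this is where $n/6$ matters) this sum is at most $n/2 - 1$, so $q \in B(c_\lambda, n/2-1) \subset S_\lambda \subset \bigcup_{S \in \cC'} S$. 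The triangle-inequality budget must total under $n/2$: roughly $n/6 + n/6 + n/6 = n/2$, with the integer-rounding adjustments absorbed by taking cubes of odd side or by noting $S$ contains $B(c, \lfloor (n-1)/2 \rfloor)$ and being slightly more generous with $s$.

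**Main obstacle.** The only real subtlety is bookkeeping the constants so that three things hold simultaneously: (a) the lattice spacing $s$ is small enough that every $q \in \interior_n(F)$ is within $s/2$ of some $\lambda$, and small enough that the triangle inequality $\rho(q,c_\lambda) < n/2$ closes; (b) $s$ is large enough that $|\Lambda| \le (2k/n)^{d_0}$; and (c) near every lattice point relevant to covering $\interior_n(F)$ there genuinely is a witness point $p_\lambda$ in $\interior_n(F)$ to which the hypothesis applies. Item (c) needs a moment's care near the boundary of $\interior_n(F)$: if $\lambda$ is near the edge, its nearest point of $\interior_n(F)$ may be at distance up to $s/2$, not $0$, which eats further into the triangle-inequality budget — this is precisely why the hypothesis gives the generous slack $n/6$ (leaving room for $n/6 + n/6 + n/6 < n/2$) and why one picks the witness $p_\lambda$ adaptively rather than insisting $p_\lambda = \lambda$. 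I would handle (c) by only including $\lambda$ in the indexing set when the distance from $\lambda$ to $\interior_n(F)$ is at most $s/2$, and by checking that every $q \in \interior_n(F)$ still rounds to such a $\lambda$. Once the constants are pinned down, the proof is a short assembly of the triangle inequality and the elementary count of lattice points in a box; I expect no conceptual difficulty beyond this arithmetic.
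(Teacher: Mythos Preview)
Your grid-selection approach is essentially the paper's, but your choice of spacing $s \approx n/3$ does not deliver the cardinality bound: a grid of spacing $n/3$ in $F$ has roughly $(3k/n)^{d_0}$ points, and $(3k/n+1)^{d_0}$ is never at most $(2k/n)^{d_0}$ (shrinking $s$ only makes this worse, not better). The fix is to take $s \approx 2n/3$ and to place the grid points \emph{inside} $\interior_n(F)$, so that no intermediate witness $p_\lambda$ is needed and the hypothesis applies directly at each grid point $\lambda$. Then the triangle inequality has only two terms, $\rho(q,\lambda)+\rho(\lambda,c_\lambda)\le n/3+n/6=n/2$, and the count is at most about $(3k/(2n))^{d_0}\le (2k/n)^{d_0}$.

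This is exactly what the paper does: it fixes an $n/2$-separated set $\mathcal{P}\subset\interior_n(F)$ whose $n/3$-thickening covers $\interior_n(F)$, selects for each $p\in\mathcal{P}$ one $S(p)\in\cC$ with center within $n/6$ of $p$, and bounds $|\mathcal{P}|$ by a packing argument (the balls $B(p,n/4)$ are pairwise disjoint and contained in $F$, giving $|\mathcal{P}|(n/2)^{d_0}\le k^{d_0}$). Your three-term triangle inequality and the boundary worry in item (c) are artifacts of allowing $\lambda\notin\interior_n(F)$; once the grid sits inside $\interior_n(F)$, both complications vanish and the arithmetic closes cleanly.
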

\begin{proof}
Let $F$ and $\cC$ be as above. Fix a set $\mathcal{P} \subset \interior_n(F)$ that is $n/2$-separated and whose $n/3$-thickening covers $\interior_n(F)$, \textit{i.e.},
\renewcommand{\theenumi}{\roman{enumi}}
\renewcommand{\labelenumi}{(\theenumi)}
\begin{enumerate}
 \item if $p,q \in \mathcal{P}$ and $\rho(p,q) \leq n/2$, then $p = q$;
 \item $\interior_n(F) \subset \bigcup_{p \in \mathcal{P}} B(p,n/3)$.
\end{enumerate}
(Note that such a set always exists. For $d_0 = 1$, let $k - 2n = m (2n/3) + r$, with $r < 2n/3$. Define points $x_i^s = n+  i (2n/3)+s$, for $i = 1, \dots, m$. Then there exists some $s < n/3$ such that $\{x_i^s\}_i$ satisfies conditions (i) and (ii). For $d_0 > 1$, take the $d_0$-fold product of the set constructed for $d_0=1$.)

For each $p$ in $\mathcal{P}$, arbitrarily choose an $n$-cube $S(p)$ in $\cC$ such that $\rho(p,c) \leq n/6$, where $c$ is the center of $S(p)$ (and note that it is always possible to make such a choice by the hypothesis of the lemma). Let $\cC' = \{S(p) : p \in \mathcal{P} \}$. We claim that $\cC'$ satisfies the conclusions of the lemma.

To verify (1), let $q$ be in $\interior_n(F)$. Then by (ii), there exists $p$ in $\mathcal{P}$ such that $\rho(p,q) \leq n/3$. Let $c$ be the center of $S(p)$. Then we have
\begin{equation*}
 \rho(q,c) \leq \rho(p,q) + \rho(p,c) \leq \frac{n}{3} + \frac{n}{6} = \frac{n}{2}.
\end{equation*}
Hence $q$ is in $B(c,n/2) = S(p)$. Since $q$ was arbitrary, we have verified (1).

Let us now verify (2). First, we show that the map $p \mapsto S(p)$ is injective on $\mathcal{P}$. Indeed, suppose $S(p) = S(q)$ for $p,q$ in $\mathcal{P}$, and let $c$ be the center of $S(p)$. Then by definition of $S(p)$ and $S(q)$, we have $\rho(p,c) \leq n/6$ and $\rho(q,c) \leq n/6$. Hence
\begin{equation*}
 \rho(p,q) \leq  \rho(p,c) + \rho(q,c) = \frac{n}{6} + \frac{n}{6} = \frac{n}{3}.
\end{equation*}
Now by (i), we conclude that $p = q$, and therefore $p \mapsto S(p)$ is injective on $\mathcal{P}$. Thus $|\cC'| = |\mathcal{P}|$.

We now bound $|\mathcal{P}|$. By (i) and the triangle inequality, we have that if $p,q \in \mathcal{P}$ and $p \neq q$, then $B(p,n/4) \cap B(q,n/4) = \varnothing$. Hence
\begin{equation*}
 |\mathcal{P}| (n/2)^{d_0} = \sum_{p \in \mathcal{P}} |B(p,n/4)| = \biggl| \bigcup_{p \in \mathcal{P}} B(p,n/4) \biggr| \leq |F| = k^{d_0}.
\end{equation*}
Thus, we have that $|\cC'| = |\mathcal{P}| \leq (2k/n)^{d_0}$, as was to be shown.

\end{proof}

\subsubsection{Finding efficient repeat covers} \label{Sect:EfficientCovers}

In Lemma \ref{Lemma:ThreePieces}, we combine our results from Sections \ref{Sect:CoverNearAFace},  \ref{Sect:CoverBetween}, and  \ref{Sect:CoverInterior} to construct ``efficient'' repeat covers. Such repeat covers are built by decomposing the hypercube $F_k$ into three pieces and covering each piece separately. Each one of Lemmas \ref{Lemma:GeneralCoding}, \ref{Lemma:Place2}, and  \ref{Lemma:CoverInterior}  is used to bound the number of repeats needed to cover one of these pieces. Note that in our application of Lemma \ref{Lemma:ThreePieces}, we will choose $r$ and $\ell$ depending on $j$ and $n$.

\begin{lemma} \label{Lemma:ThreePieces}
Suppose $r \in [1,n) $, $\ell \in [1,d-1]$, $j < n^{\ell+1}/3^d$, and $u$ is in $\cA^{F_k}$ with $|W_n(u)|=j$. Then there exists an $n$-repeat cover $J$ of $u$ such that
\begin{align*}
|J| & \leq 2 c_{d,\ell} \frac{k^{\ell} r^{d-\ell}}{n} + \frac{d(k^d-|\rArea{J}|)}{r} + \sum_{d_0 = \ell+1}^d c_{d,d_0} \biggl( \frac{2k}{n} \biggr)^{d_0}, 
\end{align*}
(recall that $c_{d,\ell}$ is the number of faces of $F_k$ of dimension $\ell$).
\end{lemma}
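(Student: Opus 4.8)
The plan is to decompose $F_k$ into three regions according to the parameters $r$ and $\ell$, build a repeat cover for each region separately using Lemmas \ref{Lemma:GeneralCoding}, \ref{Lemma:Place2}, and \ref{Lemma:CoverInterior}, and then take the union of the three resulting collections. Precisely, I would set
\begin{align*}
 R_1 &= F_k \cap B(F_{k,\ell}, r), \\
 R_2 &= \bigl( F_k \cap B(F_{k,\ell}, n) \bigr) \setminus B(F_{k,\ell}, r), \\
 R_3 &= F_k \setminus B(F_{k,\ell}, n),
\end{align*}
so that $F_k = R_1 \sqcup R_2 \sqcup R_3$. Fix any repeat cover $J_0$ of $u$ (these exist, as noted after the definition of repeat cover), and let $\rArea{J_0}$ be the canonical repeated region, which depends only on $u$. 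I will construct $J \subset J_0$ by selecting, for each region, an efficient sub-collection of the repeats in $J_0$ whose $A$-sets cover $\rArea{J_0} \cap R_i$ within that region; since $A(J)$ depends only on $u$, the union of these sub-collections is again a repeat cover for $u$, and $\rArea{J} = \rArea{J_0}$.

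For $R_1$: the region $B(F_{k,\ell},r)$ is a union of the $r$-thickenings of the faces of dimension $\ell$, of which there are $c_{d,\ell}$. For each such face $E$ of dimension $\ell \ge 1$, applying Lemma \ref{Lemma:GeneralCoding} with the collection $\{S_2 : (S_1,S_2) \in J_0\}$ and with the $n$-thickening $R$ replaced by (a set containing) the $r$-thickening of $E$ — more carefully, I would apply Lemma \ref{Lemma:GeneralCoding} to $B(E,n)\cap F_k$ and then restrict — gives a sub-collection of size at most $2|U|/n$, where $U$ is the portion of $\rArea{J_0}$ near $E$; since $U$ is contained in the $r$-thickening of an $\ell$-dimensional face, $|U| \le$ (roughly) $k^\ell r^{d-\ell}$, yielding the bound $2 c_{d,\ell} k^\ell r^{d-\ell}/n$ (the first term). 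For $R_2$: I would show that every point of $\rArea{J_0} \cap R_2$ that must be covered can be covered by an $n$-repeat $(S_1,S_2)$ whose cube $S_2$ meets $R_2$, and that the number of such cubes needed is controlled by the number of $(\ell, F_k \setminus \rArea{J_0})$-necessary points in $R_2$; Lemma \ref{Lemma:Place2}, applied with $T = \rArea{J_0}$, bounds that count by $d(k^d - |\rArea{J_0}|)/r = d(k^d - |\rArea{J}|)/r$ (the second term). For $R_3$: on $\interior_n$ of each face of dimension $d_0 > \ell$, the hypothesis $j < n^{\ell+1}/3^d$ will force the repeats to be "dense" in the sense required by Lemma \ref{Lemma:CoverInterior} (since $j$ small means $F_n$-patterns repeat often, so every point near the interior is the center of a repeated $n$-cube up to a shift of order $n/6$); that lemma then produces a sub-cover of size at most $(2k/n)^{d_0}$ per face, and summing over the $c_{d,d_0}$ faces of each dimension $d_0$ from $\ell+1$ to $d$ gives the third term.

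The main obstacle I anticipate is the bookkeeping in region $R_3$: verifying that the combinatorial hypothesis $j < n^{\ell+1}/3^d$ genuinely implies the density condition in the statement of Lemma \ref{Lemma:CoverInterior} (namely that for each interior point $p$ there is a chosen repeat whose center lies within $n/6$ of $p$). This requires a pigeonhole-type argument: in a sufficiently large interior cube, among $\gtrsim (n/3)^{d_0}$ translates of an $n$-cube there must be two equal $F_n$-patterns whenever $j$ is smaller than that count, and one must arrange the repeat cover $J_0$ (or rather the selected cubes) so that the second cube of such a repeat is suitably positioned. A secondary technical point is making the application of Lemma \ref{Lemma:GeneralCoding} to $R_1$ precise, since that lemma is phrased for the $n$-thickening of a face rather than an $r$-thickening with $r<n$; the cleanest route is to apply it to $B(E,n)\cap F_k$ and then intersect the output collection's $A$-set with $B(F_{k,\ell},r)$, checking that the size bound only improves. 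The union bound $|J| \le |J_1| + |J_2| + |J_3|$ then assembles the three estimates into the claimed inequality, and the fact that $\rArea{J} = \rArea{J_0}$ is independent of the cover makes the appearance of $|\rArea{J}|$ on the right-hand side legitimate.
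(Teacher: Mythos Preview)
Your plan is correct and matches the paper's proof essentially line for line: the same three-region decomposition $R_1 \sqcup R_2 \sqcup R_3$, the same three lemmas applied in the same way, and the same union $J = J_1 \cup J_2 \cup J_3$. Two small corrections: in $R_2$ the relevant points are $(\ell,T)$-necessary with $T = \rArea{J_0}$ itself (not $F_k \setminus \rArea{J_0}$, as you write once before implicitly correcting yourself), and for $R_1$ the paper simply applies Lemma~\ref{Lemma:GeneralCoding} with the $r$-thickening in place of the $n$-thickening (the proof goes through verbatim because $B(E,r)$ is cylindrical in the free direction $e_1$), so your proposed detour through $B(E,n)$ is unnecessary and would actually yield the weaker bound $2|B(E,n)\cap \rArea{J_0}|/n$ rather than the needed $2k^{\ell}r^{d-\ell}/n$.
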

\begin{proof}
Let $r,\ell,j$, and $u$ be as above. Let $J'$ be an $n$-repeat cover of $u$. We consider $F_k$ as a union of three regions, $F_k = R_1 \cup R_2 \cup R_3$, defined below. We will construct a repeat cover $J$ of $u$ by selecting repeats from $J'$ to cover $\rArea{J'}$ in each of these regions separately.

Let $R_1$ be the $r$-thickening of the $\ell$-skeleton in $F_k$: $R_1 = B(F_{k,\ell},r) \cap F_k$. Let $R_2$ be the $n$-thickening of the $\ell$-skeleton in $F_k$ minus $R_1$, \textit{i.e.} $R_2 = \bigl(F_k \cap B(F_{k,\ell},n) \bigr) \setminus B(F_{k,\ell},r)$. Lastly, let $R_3 = F_k \setminus (R_1 \cup R_2) = F_k \setminus B(F_{k,\ell},n)$. In light of (\ref{Eqn:ellSkeletonDecomp}), we have that 
\begin{equation} \label{Eqn:R3decomp}
 R_3 = F_k \setminus B(F_{k,\ell},n) =  \bigsqcup_{\substack{E \text{ face of } F_k \\ \dim(E) \geq \ell+1}} T_n(E),
\end{equation}
where $T_n(E)$ denotes the ``$n$-thickened interior'' of a face $E$ (defined in Section \ref{Sect:Rectangles}).

Let us now select repeats from $J'$ that cover $R_1 \cap \rArea{J'}$. Note that $R_1$ is the union of all the sets $B(E,r) \cap F_k$, where $E$ is a face of $F_k$ of dimension $\ell$. Let $\cC = \{S_2 : (S_1,S_2) \in J'\}$. For each face $E$ of dimension $\ell$, we apply Lemma \ref{Lemma:GeneralCoding} and conclude that there exists a subset $J_1(E) \subset J'$ such that 
\begin{align}
 \begin{split}
 \label{Eqn:Thing1}
B(E,r) \cap \rArea{J'} & = \bigcup_{(S_1,S_2) \in J_1(E)} S_2 \cap B(E,r) \\ & = B(E,r) \cap  \rArea{J_1(E)},
\end{split}
\end{align}
and 
\begin{equation} \label{Eqn:Thing2}
|J_1(E)| \leq \frac{2|B(E,r) \cap \rArea{J'}|}{n} \leq \frac{2 k^{\ell} r^{d-\ell}}{n}. 
\end{equation}
Let $J_1 = \cup_E J_1(E)$, where the union runs over all faces of $F_k$ of dimension $\ell$. By (\ref{Eqn:Thing1}) and (\ref{Eqn:Thing2}), we have
\begin{equation} \label{Eqn:R1covered}
 R_1 \cap \rArea{J'} = R_1 \cap \rArea{J_1},
\end{equation}
and
\begin{equation} \label{Eqn:J1bound}
 |J_1| \leq 2 c_{d,\ell} \frac{k^{\ell} r^{d-\ell}}{n}.
\end{equation}

Let us proceed to select repeats from $J'$ that cover $R_2 \cap \rArea{J'}$. Let $T = \rArea{J'}$. Let $\mathcal{N}$ be the set of $(\ell,T)$-necessary points in $R_2$. For each $p$ in $\mathcal{N}$, arbitrarily choose a repeat $(S_1(p),S_2(p))$ in $J'$ such that $p \in S_2(p)$, and let $J_2 = \{ (S_1(p),S_2(p)) : p \in \mathcal{N}\}$. We claim that 
\begin{equation} \label{Eqn:R2covered}
 R_2 \cap \rArea{J'} = R_2 \cap \Bigl( \rArea{J_1} \cup \rArea{J_2} \Bigr).
\end{equation}
By definition, the set on the right-hand side is contained in the set on the left-hand side. Let $q$ be in $R_2 \cap \rArea{J'}$. If $q$ is in $\mathcal{N}$, then by construction there exists a repeat $(S_1,S_2)$ in $J_2$ such that $q \in S_2$. Suppose $q$ is not in $\mathcal{N}$, \textit{i.e.}, $q$ is not $(\ell,T)$-necessary. Since $q$ is in $B(F_{k,\ell},n)$, there exists a face $E = F_k(\mathcal{I},s)$ of dimension $\ell$ such that $q \in B(E,n)$. Let $D$ be the region in $F_k$ ``between'' $q$ and $E$ (consisting of all vertices along shortest paths from $q$ to $E$). Then there exists an $(E,T)$-necessary point $q'$ in $D$ (otherwise $q$ would be $(E,T)$-necessary). If $q'$ is in $B(E,r)$, then there exists a repeat $(S_1,S_2)$ in $J_1$ such that $q' \in S_2$, and therefore $q \in S_2$ (since $q$ is in $B(E,n)$ and $q'$ is ``between'' $q$ and $E$, any translate of $F_n$ inside $F_k$ that contains $q'$ must also contain $q$). Otherwise, if $q'$ is not in $B(E,r)$, then $q'$ is in $\mathcal{N}$, and therefore $q \in S_2(q')$ (again, since $q$ is in $B(E,n)$ and $q'$ is ``between'' $q$ and $E$, any translate of $F_n$ inside $F_k$ that contains $q'$ must also contain $q$). In either case, $q$ is in $\rArea{J_1} \cup \rArea{J_2}$.
Since $q$ was arbitrary, we deduce that (\ref{Eqn:R2covered}) holds.

 Furthermore, by definition of $J_2$ and Lemma \ref{Lemma:Place2}, we have
\begin{equation} \label{Eqn:J2bound}
 |J_2| \leq |\mathcal{N}| \leq \frac{d (k^d - | \rArea{J'}|)}{r}. 
\end{equation}

Finally we select repeats from $J'$ that cover $R_3 \cap \rArea{J'}$. Let $E$ be a face of $F_k$ of dimension $d_0 \in [\ell+1,d]$, and let $\cC = \{ E \cap S_2 : (S_1,S_2) \in J'\}$. Consider $E$ as a $k$-hypercube in $\Z^{d_0}$.  Let $p$ be in $\interior_n(E)$. In $\Z^{d_0}$, there $n^{d_0}/3^{d_0}$ translates of $F_n$ whose centers are within $n/6$ of $p$. Since $n^{d_0}/3^{d_0} \geq n^{\ell+1}/3^d > |W_n(u)|$, the patterns in $u$ appearing at these translates cannot all be distinct, meaning that there is a repeat. Thus, for each $p$ in $\interior_n(E)$, there exists a repeat $(S_1,S_2)$ in $J'$ such that $\rho(p,c) \leq n/6$, where $c$ is the center of $S_2$. Applying Lemma \ref{Lemma:CoverInterior}, we conclude that there exists $J_3(E) \subset J'$ such that
\begin{equation} \label{Eqn:J3covers}
 \interior_n(E) \subset \bigcup_{(S_1,S_2) \in J_3(E)} S_2 \cap E,
\end{equation}
and
\begin{equation} \label{Eqn:J3Facebound}
 |J_3(E)| \leq \biggl(\frac{2k}{n}\biggr)^{d_0}.
\end{equation}
By (\ref{Eqn:J3covers}), we have that 
\begin{equation} \label{Eqn:J3covers2}
 T_n(E) \subset \bigcup_{(S_1,S_2) \in J_3(E)} S_2.
\end{equation}
Let $J_3 = \cup_E J_3(E)$, where the union runs over all faces $E$ with dimension in $[\ell+1,d]$. Then by (\ref{Eqn:J3covers2}) and (\ref{Eqn:R3decomp}), we have
\begin{equation} \label{Eqn:R3covered}
 R_3 \cap \rArea{J'} = R_3 \cap \rArea{J_3}.
\end{equation}
By (\ref{Eqn:J3Facebound}), we have
\begin{equation} \label{Eqn:J3bound}
 |J_3| \leq \sum_{d_0=\ell+1}^d c_{d,d_0} \biggl( \frac{2 k}{n} \biggr)^{d_0}.
\end{equation}

Finally, we set $J = J_1 \cup J_2 \cup J_3$. By (\ref{Eqn:R1covered}), (\ref{Eqn:R2covered}), and (\ref{Eqn:R3covered}), we have that
\begin{equation} \label{Eqn:JandJprime}
 \rArea{J'} = \rArea{J}.
\end{equation}
Since $J'$ was an $n$-repeat cover of $u$ and $J$ is a subset of $J'$ satisfying (\ref{Eqn:JandJprime}), we have that $J$ is an $n$-repeat cover of $u$. Furthermore, by (\ref{Eqn:J1bound}), (\ref{Eqn:J2bound}),  (\ref{Eqn:J3bound}), and (\ref{Eqn:JandJprime}), we have that
\begin{align*}
 |J| & \leq |J_1| + |J_2| + |J_3| \\
 & \leq 2 c_{d,\ell} \frac{k^{\ell} r^{d-\ell}}{n} + \frac{d (k^d - |\rArea{J}|)}{r} + \sum_{d_0=\ell+1}^d c_{d,d_0} \biggl( \frac{2 k}{n} \biggr)^{d_0}.
\end{align*}
\end{proof}

We conclude this section with the following lemma, which quantifies the asymptotic efficiency that we can guarantee for repeat covers. This lemma plays a crucial role in obtaining the lower bound on the probability of emptiness in Section \ref{Sect:EmptinessProof}. The proof of the lemma involves a direct application of Lemma \ref{Lemma:ThreePieces} and some calculations. In Remark \ref{Rmk:ExampleSequences}, we provide examples of sequences $\{f(n)\}$ and $\{r_n\}$ that satisfy the hypotheses of the lemma.

\begin{lemma} \label{Lemma:FinalCoveringLemma}
 Suppose $\{f(n)\}$ and $\{r_n\}$ are sequences such that
\begin{enumerate}
\item $f(n) \to \infty$;
\item  $\log(n) / r_n \to 0$;
\item $f(n) r_n = o\bigl( (n/ \log n)^{1/d} \bigr)$.
\end{enumerate}
Let $k = k(n) = n f(n)$. For any $\delta >0$, there exists $n_0$ such that if $n \geq n_0$ and $u \in \cA^{F_k}$ with $|W_n(u)| = j \in [n/5^d, (k-n+1)^d]$, then there exists a repeat cover $J$ for $u$ such that
\begin{equation*}
|J| \leq \delta \frac{j}{\log n}.
\end{equation*}
\end{lemma}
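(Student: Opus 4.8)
The plan is to apply Lemma \ref{Lemma:ThreePieces} with a careful choice of the parameter $\ell$ as a function of $j$ and $n$, and then estimate each of the three terms in the resulting bound on $|J|$. Given $n \geq n_0$ and $u \in \cA^{F_k}$ with $|W_n(u)| = j \in [n/5^d, (k-n+1)^d]$, first I would locate the integer $\ell = \ell(j)$ with $n^{\ell}/5^d \leq j < n^{\ell+1}/3^d$; such an $\ell$ exists in $[1, d-1]$ because $j \geq n/5^d$ forces $\ell \geq 1$, and $j \leq (k-n+1)^d \leq k^d = (nf(n))^d$ together with $f(n) = o((n/\log n)^{1/d})$ (a consequence of hypothesis (3), since $r_n \geq 1$ eventually) keeps $\ell \leq d-1$ for large $n$. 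The hypothesis $j < n^{\ell+1}/3^d$ of Lemma \ref{Lemma:ThreePieces} is then satisfied by construction, so we obtain a repeat cover $J$ with
\begin{equation*}
|J| \leq 2 c_{d,\ell} \frac{k^{\ell} r_n^{\,d-\ell}}{n} + \frac{d\bigl(k^d - |\rArea{J}|\bigr)}{r_n} + \sum_{d_0=\ell+1}^{d} c_{d,d_0}\biggl(\frac{2k}{n}\biggr)^{d_0}.
\end{equation*}

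Now I would bound the three terms separately, in each case against $\delta j/\log n$, absorbing the dimension-dependent constants $c_{d,\ell}$ into $\delta$ after choosing $n_0$ large. For the first term, use $k = nf(n)$ and $r_n \leq f(n) r_n$ (valid once $f(n) \geq 1$) to write $k^{\ell} r_n^{d-\ell}/n \leq n^{\ell-1} f(n)^{\ell} (f(n) r_n)^{d-\ell} = n^{\ell-1}(f(n) r_n)^{d}$ when $\ell \leq d$ — actually more cleanly, $k^\ell r_n^{d-\ell} = n^\ell f(n)^\ell r_n^{d-\ell} \le n^\ell (f(n) r_n)^d / r_n^{\ell}$ is awkward, so instead bound $k^\ell r_n^{d-\ell}/n \le n^{\ell-1} (f(n) r_n)^{d}$ using $f(n) \le f(n) r_n$ and $r_n \le f(n) r_n$; then since $j \geq n^{\ell}/5^d$ we get $n^{\ell-1} \leq 5^d j/n$, so this term is at most $C_d (f(n) r_n)^d j/n$, and hypothesis (3) gives $(f(n) r_n)^d = o(n/\log n)$, hence $(f(n)r_n)^d/n = o(1/\log n)$, which beats $\delta/\log n$ for large $n$. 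For the second term, drop $|\rArea{J}|$ to get $\leq d k^d/r_n = d(nf(n))^d/r_n$; comparing with $j \geq n^d/5^d$ (when $\ell$ could be as large as $d-1$, $j \ge n^{\ell}/5^d$, so I should instead compare directly) — more robustly, $k^d/r_n$ vs. $j/\log n$ needs $j \gtrsim k^d \log n / r_n$, so I would instead write $d(k^d - |\rArea{J}|)/r_n$ and invoke Lemma \ref{Lemma:Nuggets}, which gives $k^d - |\rArea{J}| \leq j(1 + 4dn/k) \leq 2j$ for $k$ large (since $k/n = f(n) \to \infty$); hence this term is $\leq 2dj/r_n$, and hypothesis (2) ($\log n/r_n \to 0$) gives $2d/r_n = o(1/\log n)$, so this term is at most $\delta j/(3\log n)$ for large $n$. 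For the third term, each summand is $(2k/n)^{d_0} = (2f(n))^{d_0} \leq (2f(n))^d$ for $d_0 \leq d$; since $\ell \geq 1$ we have $j \geq n^{\ell}/5^d \geq n/5^d$, and more to the point $j \geq n^{\ell}/5^d$ with $d_0 \geq \ell+1$ means $(2f(n))^{d_0}$ should be compared against $n^{\ell}/(5^d \log n)$ — using hypothesis (3) again, $f(n) r_n = o((n/\log n)^{1/d})$ and $r_n \geq 1$ give $f(n)^d = o(n/\log n)$, so $(2f(n))^d = o(n/\log n) \leq o(n^{\ell}/\log n) \leq o(j/\log n)$, handling this term too.

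The main obstacle I anticipate is bookkeeping the interplay among the three constraints on $\{f(n)\}$ and $\{r_n\}$: hypothesis (3) is what controls the first and third terms (it forces $f(n)$ not too large), while hypothesis (2) controls the second term (it forces $r_n$ not too small relative to $\log n$), and hypothesis (1) is needed to invoke Lemma \ref{Lemma:Nuggets} via $k/n = f(n) \to \infty$ so that $k^d - |\rArea{J}| = O(j)$. The delicate point is that $\ell$ ranges over $\{1,\dots,d-1\}$ depending on $j$, so the estimates must be uniform in $\ell$; the cleanest way to manage this is to always bound $n^{\ell-1}$ from above by $5^d j/n$ (from the lower bound $j \geq n^{\ell}/5^d$) in the first term, and to bound $n^{\ell}$ from above similarly in the third term, so that every term acquires a factor of $j$ and the remaining prefactor is shown to be $o(1/\log n)$ by one of the three hypotheses. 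Once all three terms are each shown to be at most $\delta j/(3\log n)$ for $n \geq n_0(\delta)$, adding them gives $|J| \leq \delta j/\log n$, as required.
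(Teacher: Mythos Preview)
Your approach via Lemma \ref{Lemma:ThreePieces} is the right one, and your estimates for the three terms are essentially correct and match the paper's. However, there is a genuine gap in the case analysis.

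You claim that for every $j \in [n/5^d,(k-n+1)^d]$ there exists $\ell \in [1,d-1]$ with $n^{\ell}/5^d \leq j < n^{\ell+1}/3^d$, arguing that ``$j \leq k^d = (nf(n))^d$ together with $f(n) = o((n/\log n)^{1/d})$ keeps $\ell \leq d-1$.'' This is false: the largest upper endpoint among those intervals is $n^d/3^d$ (for $\ell = d-1$), whereas $(k-n+1)^d \sim k^d = n^d f(n)^d$ with $f(n) \to \infty$, so $(k-n+1)^d \gg n^d/3^d$. Thus for $j \in [n^d/3^d,(k-n+1)^d]$ no admissible $\ell$ exists and Lemma \ref{Lemma:ThreePieces} cannot be invoked. (When $d=1$ the interval $[1,d-1]$ is empty and Lemma \ref{Lemma:ThreePieces} is never available at all.)

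The paper handles the range $j \geq n^d/3^d$ separately, bypassing Lemma \ref{Lemma:ThreePieces} entirely: it applies Lemma \ref{Lemma:GeneralCoding} to the full cube $F_k$ (viewed as its own $d$-dimensional face) to obtain a repeat cover with $|J| \leq 2k^d/n = 2n^{d-1}f(n)^d$. Then, writing $f(n)^d = o(n/\log n)$ from hypothesis (3) (since $r_n \to \infty$), one gets $|J| = o(n^d/\log n)$, and $j \geq n^d/3^d$ converts this to $|J| \leq \delta j/\log n$ for large $n$. Once you add this separate case, the rest of your argument (invoking Lemma \ref{Lemma:Nuggets} for the middle term and bounding $n^{\ell-1} \leq 5^d j/n$ and $n^{\ell} \leq 5^d j$ uniformly in $\ell$ for the outer terms) goes through and agrees with the paper.
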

\begin{proof}
Fix $f(n)$ and $r_n$ as above, and let $k = k(n) = n f(n)$.
Let $\delta >0$.
By hypotheses (2) and (3), we may write $r_n = \frac{\log n}{g_1(n)}$ and $f(n) r_n = g_2(n) (n/ \log n)^{1/d}$, where $g_1(n)$ and $g_2(n)$ tend to $0$ as $n$ tends to infinity. For $u$ in $\cA^{F_k}$ with $|W_n(u)|= j \geq n^d / 3^d$, we apply Lemma \ref{Lemma:GeneralCoding} to the hypercube $F_k$ (considered as a face of $F_k$ of dimension $d$) and conclude that there exists a repeat cover $J$ for $u$ such that
\begin{align*}
 |J| & \leq 2|F_k|/n = 2 k^d /n.
\end{align*}
Then since $f(n)^d \leq g_2(n)^d n / \log n$ and $j \geq n^d / 3^d$, we have
\begin{align*}
|J| & \leq 2 k^d/n \\ &  = 2 n^{d-1} f(n)^d \\ & \leq 2 n^{d-1} g_2(n)^d n/\log n  \\ & = 2 g_2(n)^d n^d / \log n \\ & \leq 2 \cdot 3^d g_2(n)^d \frac{j}{\log n}.
\end{align*}
Since $g_2(n)$ tends to $0$, there exists $n_1$ such that if $n \geq n_1$, then $2 \cdot 3^d g_2(n)^d \leq \delta$. Then for $n \geq n_1$, we have that $|J| \leq \delta j / \log n$.

Now suppose $n^{\ell} / 5^d \leq j < n^{\ell+1}/3^d$ for some $\ell \in [1,d-1]$. Then by Lemma \ref{Lemma:ThreePieces}, for each $u$ in $\cA^{F_k}$ with $|W_n(u)|=j$, there exists a repeat cover $J$ such that
\begin{align} \label{Eqn:ThreePieces}
|J| & \leq 2 c_{d,\ell} \frac{k^{\ell} r_n^{d-\ell}}{n} + \frac{d(k^d-|\rArea{J}|)}{r_n} + \sum_{d_0=\ell+1}^d c_{d,d_0} \biggl( \frac{2 k}{n} \biggr)^{d_0}.
\end{align}
Fix $\epsilon >0$. By Lemma \ref{Lemma:Nuggets} and the fact that $n/k  = 1/f(n)$ tends to $0$ (hypothesis (1)), there exists $n_2$ such that if $n \geq n_2$ then $k^d - |\rArea{J}| \leq (1+4dn/k) j \leq (1+\epsilon) j$. 
Applying this inequality in (\ref{Eqn:ThreePieces}) gives that for $n \geq n_2$, we have
\begin{align} \label{Eqn:Carrboro}
|J| & \leq 2 c_{d,\ell} \frac{k^{\ell} r_n^{d-\ell}}{n} + \frac{d(1+\epsilon)j}{r_n} + \sum_{d_0=\ell+1}^d c_{d,d_0} \biggl( \frac{2 k}{n} \biggr)^{d_0}.
\end{align}
Note that $c_{d,d_0} \leq 2^d$, and then we have
\begin{equation} \label{Eqn:Hillsborough}
\sum_{d_0=\ell+1}^d c_{d,d_0} \biggl( \frac{2 k}{n} \biggr)^{d_0} \leq d4^d k^d / n^d.
\end{equation}
Then for $n \geq n_2$, by (\ref{Eqn:Carrboro}) and (\ref{Eqn:Hillsborough}) we obtain that 
\begin{align*}
|J| & \leq 2 c_{d,\ell} \frac{k^{\ell} r_n^{d-\ell}}{n} + d(1+\epsilon)\frac{j}{r_n} + d4^d k^d / n^d \\ 
& \leq 2 c_{d,\ell} n^{\ell-1} g_2(n)^d n/\log n + d(1+\epsilon) \frac{g_1(n)j}{\log n} + d 4^d g_2(n)^d n / \log n \\
& \leq 2 c_{d,\ell} n^{\ell}  g_2(n)^d / \log n  + d(1+\epsilon) \frac{g_1(n) j}{\log n} + d 4^d g_2(n)^d n /\log n \\
& \leq (2c_{d,\ell}+d 4^d) \frac{g_2(n)^d}{\log n} n^{\ell} + d(1+\epsilon) \frac{g_1(n)j}{\log n} \\
& \leq (2c_{d,\ell}+d 4^d) \frac{g_2(n)^d}{\log n} 5^d j + d(1+\epsilon) \frac{g_1(n)j}{\log n} \\
& \leq \biggl(5^d(2 c_{d,\ell}+d 4^d) g_2(n)^d  + d(1+\epsilon) g_1(n) \biggr) \frac{j}{\log n}.
\end{align*}
As $g_1(n)$ and $g_2(n)$ tend to $0$ as $n$ tends to infinity, we see that there exists $n_0 \geq \max(n_1,n_2)$ such that for $n \geq n_0$, we have
\begin{equation*}
|J| \leq \delta \frac{j}{\log n}.
\end{equation*}
\end{proof}

\begin{rmk} \label{Rmk:ExampleSequences}
 As an example of sequences $\{f(n)\}$ and $\{r_n\}$ that satisfy the hypotheses of Lemma \ref{Lemma:FinalCoveringLemma}, one may take $f(n) = r_n = n^{\tau}$ for any $\tau$ satisfying $0 < \tau < \frac{1}{2d}$. In this case, one obtains $k = k(n) =  n^{1+\tau}$, $g_1(n) = \log(n)/ n^{\tau}$, and $g_2(n) = \log(n)^{1/d} n^{2 \tau - \frac{1}{d}}$. With this parametrization, the choice of $\tau$ optimizing the upper bound in the proof of Lemma~\ref{Lemma:FinalCoveringLemma} is given by $\tau = \frac{1}{2d+1}$, which yields the estimate $|J| \leq C j / n^{\frac{1}{2d+1}}$ for some constant $C$.
\end{rmk}

\subsection{Proofs of Theorems \ref{Thm:EmptinessIntro} and \ref{Thm:ExoticBehavior}} \label{Sect:EmptinessProof}

In this section, we present proofs of Theorems \ref{Thm:EmptinessIntro} and \ref{Thm:ExoticBehavior}. As mentioned previously, the proof of Theorem \ref{Thm:EmptinessIntro} involves finding both upper and lower bounds on the probability of emptiness. The upper bound is given by Proposition \ref{Prop:UB}. The lower bound requires additional work, and Lemma \ref{Lemma:FinalCoveringLemma} plays a crucial role in that regard. 

\vspace{2mm}

\begin{PfofEmptinessThm}{}
 Let $X = \mathcal{A}^{\Z^d}$. Note that the radius of convergence of $\zeta_X(t)$ is $|\cA|^{-1}$. 
Also note that in the trivial case $\alpha = 0$, we have  $\mathbb{P}_{n,0}(\mathcal{E}_n) = 1 = \zeta_{X}(0)^{-1}$, so the conclusion of the theorem holds in this case.


First consider the case $\alpha \geq |\cA|^{-1}$. By inclusion, we have $\PP(\mathcal{E}_n) \leq \PP( \Per(X_{\omega}) = \varnothing )$. Combining this inequality with Proposition \ref{Prop:UB} and letting $n$ tend to infinity, we obtain the conclusion of the theorem for $\alpha \geq |\cA|^{-1}$ (since $\zeta_X(t)$ diverges to infinity for $t \geq |\cA|^{-1}$).

For the rest of the proof, we assume $\alpha < |\cA|^{-1}$. 
Observe that for any $k \geq n$, we have that
\begin{align} \label{Eqn:Inclusion}
\PP(\mathcal{E}_n) \geq \PP \bigl(W_k(X_{\omega} \bigr) = \varnothing),
\end{align}
by inclusion.
For each $u$ in $\cA^{F_k}$, let $F(u)$ be the event that $u$ is forbidden (\textit{i.e.}, $W_n(u) \cap \mathcal{F}(\omega) \neq \varnothing$). Then by (\ref{Eqn:Inclusion}) we have
\begin{equation}  \label{Eqn:Intersection}
\PP(\mathcal{E}_n) \geq \PP \bigl(W_k(X_{\omega}) = \varnothing \bigr) = \PP \Biggl( \bigcap_{u \in \cA^{F_k}} F(u) \Biggr).
\end{equation}
Let $P_j$ be the set of finite orbits $\gamma$ in $X$ such that $|\gamma | = j$.
To each $\gamma$ in $P_j$ with $j \leq n/2$, we associate a pattern $u_{\gamma}$ in $\cA^{F_k}$ such that $W_n(u_{\gamma}) = W_n(\gamma)$ (and therefore $|W_n(u_{\gamma})| = |\gamma|$). Let $S_0 = \{u_{\gamma} : |\gamma|\leq n/2\}$, and let
\begin{equation*}
 S_1 = \biggl\{ u \in \cA^{F_k} : \forall \gamma \in \bigcup_{j \leq n/2} P_j, \, W_n(\gamma) \setminus W_n(u) \neq \varnothing \biggr\}.
\end{equation*}
Let $S = S_0 \sqcup S_1$. Note that if $u \in \cA^{F_k} \setminus S_1$, then there exists a finite orbit $\gamma$ such that $|\gamma|\leq n/2$ and $W_n(\gamma) \subset W_n(u)$, and therefore $F(u_{\gamma}) \subset F(u)$. Hence, we have
\begin{equation} \label{Eqn:SufficientIntersection}
\bigcap_{u \in \cA^{F_k}} F(u) = \bigcap_{u \in S} F(u).
\end{equation}

Since each $F(u)$ is a monotone decreasing event (meaning that if $\omega \notin F(u)$ and $\omega \leq \tau$ coordinate-wise, then $\tau \notin F(u)$), then by (\ref{Eqn:Intersection}), (\ref{Eqn:SufficientIntersection}), and the FKG inequality (see \cite{Grimmett} for a proof), we get
\begin{align}
\begin{split} \label{Eqn:FKG}
\PP(\mathcal{E}_n) & \geq \PP \Biggl( \bigcap_{u \in \cA^{F_k}} F(u) \Biggr) \\ & = \PP \Biggl( \bigcap_{u \in S} F(u) \Biggr) \\ & \geq \prod_{u \in S}  \PP(  F(u) ). 
\end{split}
\end{align}

Using (\ref{Eqn:FKG}) and then re-writing, we obtain
\begin{align}
\begin{split} \label{Eqn:Rewrite}
\PP(\mathcal{E}_n) & \geq \prod_{u \in S}  \PP(  F(u) ) \\
 & = \prod_{u \in S} (1-\alpha^{|W_n(u)|}) \\
 & = \prod_{u \in S_0} (1-\alpha^{|W_n(u_{\gamma})|}) \prod_{u \in S_1} (1-\alpha^{|W_n(u)|}) \\
 & = \prod_{|\gamma|\leq n/2 } (1-\alpha^{|\gamma|}) \prod_{u \in S_1} (1-\alpha^{|W_n(u)|}) .
\end{split}
\end{align}

By Lemma \ref{Lemma:Smallj}, if $u$ is in $S_1$, then $|W_n(u)| > n/2$. From  (\ref{Eqn:Rewrite}) and this fact, we see that
\begin{align}
\begin{split} \label{Eqn:UseSmalljLemma}
 \PP(\mathcal{E}_n) & \geq \prod_{|\gamma|\leq n/2 } (1-\alpha^{|\gamma|}) \prod_{u \in S_1} (1-\alpha^{|W_n(u)|}) \\
 & \geq \prod_{|\gamma|\leq n/2 } (1-\alpha^{|\gamma|}) \prod_{|W_n(u)| > n/2} (1-\alpha^{|W_n(u)|}).
\end{split}
\end{align}
Recall the notation (from (\ref{Eqn:Nnkj}))
\begin{equation*}
N_{n,k}^j = \{u \in \cA^{F_k} : |W_n(u)|=j\}.
\end{equation*}
Then (\ref{Eqn:UseSmalljLemma}) gives
\begin{equation} \label{Eqn:LCD}
\PP(\mathcal{E}_n) \geq \prod_{|\gamma|\leq n/2 } (1-\alpha^{|\gamma|}) \prod_{j=\lfloor n/2 \rfloor +1}^{(k-n+1)^d} (1-\alpha^j)^{|N_{n,k}^j|}.
\end{equation}

Let us now show that there exist $C>0$ and $0< \beta <1$ such that for large enough $n$, we have
\begin{equation} \label{Eqn:ExpEmptinessLB}
  \prod_{j=n/2}^{(k-n+1)^d} (1-\alpha^j)^{|N_{n,k}^j|} \geq \exp\Bigl( - C \beta^{n} \Bigr).
\end{equation}

%

 As $\alpha |\cA| < 1$, there exist $\epsilon>0$ and $\delta>0$ such that $\alpha |\cA|^{1+\epsilon} < 1$ and $8 \delta  d < -\log(\alpha |\cA|^{1+\epsilon})$. Also, let $\{f(n)\}$ and $\{r_n\}$ be sequences satisfying the hypotheses of Lemma \ref{Lemma:FinalCoveringLemma} (they may be taken as in Remark \ref{Rmk:ExampleSequences}), and let $k = k(n) = n f(n)$.
By Lemma \ref{Lemma:FinalCoveringLemma}, there exists $n_0$ such that if $n \geq n_0$ and $u  \in N_{n,k}^j$ with $n/5^d \leq j \leq (k-n+1)^d$, 
 then there exists a repeat cover $J$ of $u$ such that
\begin{equation*} 
 |J| \leq \delta \frac{j}{\log n}.
\end{equation*}
For the moment, fix such a $j$. 
To each $u$ in $N_{n,k}^j$, let $J_u$ be a repeat cover of $u$ such that $|J_u| \leq \delta j / \log n$, and let $w_u = u|_{F_k \setminus \rArea{J_u}} \in \cA^{F_k \setminus \rArea{J_u}}$. Let
\begin{equation*}
 U = \{ (J,w) : J \subset \cC_n(F_k) \times \cC_n(F_k), \, w \in \cA^{F_k \setminus \rArea{J}} \}.
\end{equation*}
Define a map $\phi : N_{n,k}^j \to U$ by $\phi(u) = (J_u,w_u)$. By Lemma \ref{Lemma:DefineAWord}, $\phi$ is injective, so that $|N_{n,k}^j| = |\phi(N_{n,k}^j)|$. Also, let $\mathcal{P}(S)$ denote the power set of a set $S$, and define a map $\pi : U \to \mathcal{P}(\cC_n(F_k) \times \cC_n(F_k))$ by $\pi(J,w) = J \subset \cC_n(F_k) \times \cC_n(F_k)$. By construction, 
\begin{align}
 \begin{split} \label{Eqn:ByConstruction}
\pi \circ \phi(N_{n,k}^j) & = \{ J_u : u \in N_{n,k}^j \} \\
 &   \subset \{ J \subset \cC_n(F_k) \times \cC_n(F_k) : |J| \leq \delta j / \log n\}  .
 \end{split}
\end{align}
Using  (\ref{Eqn:ByConstruction}) and $|\cC_n(F_k)| \leq k^d$, we see that 
\begin{align} 
\begin{split} \label{Eqn:SizePiPhi}
 | \pi \circ \phi(N_{n,k}^j)| & \leq |\{ J \subset \cC_n(F_k) \times \cC_n(F_k) : |J| \leq \delta j / \log n\}| \\ & \leq (k^{2d})^{ \delta j / \log n+1}.
\end{split}
\end{align}
Also, for each $J \subset \cC_n(F_k) \times \cC_n(F_k)$,
\begin{equation} \label{Eqn:PreImageJ}
 |\pi^{-1}(J)| \leq |\cA|^{|F_k \setminus \rArea{J}|} = |\cA|^{k^d - |\rArea{J}|}.
\end{equation}
By Lemma \ref{Lemma:Nuggets} and the fact that $n/k = 1/f(n) \to 0$, there exists $n_1 \geq n_0$ such that if $n \geq n_1$ and $u \in N_{n,k}^j$, then 
\begin{equation} \label{Eqn:UseNuggets}
 k^d - |\rArea{J_u}| \leq (1+4dn/k)j \leq (1+\epsilon) j.
\end{equation}
 
By (\ref{Eqn:SizePiPhi}), (\ref{Eqn:PreImageJ}), and (\ref{Eqn:UseNuggets}), we obtain that for $n \geq n_1$ and $n/5^d \leq j \leq (k-n+1)^d$, 
\begin{align}
\begin{split} \label{Eqn:UseThree}
|N_{n,k}^j| & = |\phi(N_{n,k}^j)| \\
 & \leq |\pi \circ \phi(N_{n,k}^j)| \cdot \max \{ |\pi^{-1}(J)| : J \in \pi \circ \phi(N_{n,k}^j) \} \\
 & \leq k^{2d (\delta j +\log n) / \log n} \cdot \max \{|\cA|^{k^d - |\rArea{J_u}|} : u \in N_{n,k}^j \} \\
 & \leq k^{2d  (\delta j + \log n)  / \log n} \cdot |\cA|^{(1+\epsilon) j}.
\end{split}
\end{align} 
Since $k = nf(n)$ and $\{f(n)\}$ satisfies hypothesis (3) from Lemma \ref{Lemma:FinalCoveringLemma}, there exists $n_2 \geq n_1$ such that if $n \geq n_2$, then $k \leq n^2$. Then by (\ref{Eqn:UseThree}),  for $n \geq n_2$, we have
\begin{align}
\begin{split} \label{Eqn:UseChoiceDelta}
\alpha^j |N_{k,n}^j|  & \leq \alpha^j |\cA|^{(1+\epsilon)j} k^{ 2 d (\delta j + \log n) / \log n} \\
& = \exp \biggl( j \log (\alpha |\cA|^{1+\epsilon}) + 2 d ( \delta j + \log n) \log (k) / \log n \biggr) \\
& \leq \exp \biggl(  j \Bigl( \log (\alpha |\cA|^{1+\epsilon}) + 4 d (\delta + \log (n)/j) \Bigr) \biggr).
\end{split}
\end{align}
Since $j \geq n/5^d$ (so that $\log(n) /j$ tends to $0$), our choice of $\delta$ implies that there exists $n_3 \geq n_2$ such that if $n \geq n_3$ then $4d (\delta + \log(n) /j) \leq - \log( \alpha |\cA|^{1+\epsilon})/2$. Hence for $n \geq n_3$, (\ref{Eqn:UseChoiceDelta}) yields
\begin{align} \label{Eqn:UseChoiceDelta2}
\alpha^j |N_{k,n}^j| \leq \exp \biggl( j  \log ( \alpha |\cA|^{1+\epsilon})/ 2 \biggr).
\end{align}
Letting $\beta_0 = (\alpha |\cA|^{1+\epsilon})^{1/2} <1$, we see from (\ref{Eqn:UseChoiceDelta2}) that 
\begin{align} \label{Eqn:InTermsBeta}
\sum_{j = n/2}^{(k-n+1)^d} \alpha^j |N_{k,n}^j| & \leq \sum_{j = n/2}^{(k-n+1)^d} \beta_0^j  \leq \beta_0^{n/2} \frac{1}{1- \beta_0}.
\end{align}
By calculus, there exist $n_4 \geq n_3$ and $C_0>0$ such that for $j \geq n_4/2$,
\begin{equation} \label{Eqn:Calculus}
- \log (1 - \alpha^{j}) \leq C_0 \alpha^{j}.
\end{equation}
Then for $n \geq n_4$, by (\ref{Eqn:InTermsBeta}) and (\ref{Eqn:Calculus}), we have that
\begin{align*}
 \prod_{j = \lfloor n/2 \rfloor +1}^{(k-n+1)^d} (1-\alpha^j)^{|N_{n,k}^j|} & = \exp\Biggl( \sum_{j = n/2}^{(k-n+1)^d} |N_{n,k}^j| \log(1-\alpha^j) \Biggr) \\
 & \geq \exp\Biggl( - C_0  \sum_{j = n/2}^{(k-n+1)^d} \alpha^j |N_{n,k}^j|  \Biggr) \\
 & \geq \exp\Biggl( - C_0 \beta_0^{n/2} \frac{1}{1- \beta_0}  \Biggr).
\end{align*}
Taking $C = C_0/(1-\beta_0)$ and $\beta = \beta_0^{1/2}$ establishes (\ref{Eqn:ExpEmptinessLB}).

Note that (\ref{Eqn:UseSmalljLemma}), (\ref{Eqn:LCD}) and (\ref{Eqn:ExpEmptinessLB}) together imply that for large enough $n$,
\begin{equation} \label{Eqn:ExpLB}
 \PP(\mathcal{E}_n) \geq \prod_{|\gamma|\leq n/2 } (1-\alpha^{|\gamma|}) \exp\Bigl( - C \beta^{n} \Bigr).
\end{equation}
Then (\ref{Eqn:ExpLB}) shows that for large enough $n$, we have
\begin{equation} \label{Eqn:ExpLB2}
 \PP(\mathcal{E}_n) \geq \zeta_X(\alpha)^{-1} \exp \Bigl(-C\beta^n \bigr).
\end{equation}

By (\ref{Eqn:ExpLB2}) and Proposition \ref{Prop:UB} (see (\ref{Eqn:ExpUBsubcrit})), we have that for $\alpha \in (0,\, |\cA|^{-1})$, there exist $C_1,C_2>0$ and $\beta_1,\beta_2,\in (0,1)$ such that for large enough $n$,
\begin{equation} \label{Eqn:Slanted}
 1 - C_1 \beta_1^n \leq \frac{\PP(\mathcal{E}_n)}{\zeta_X(\alpha)^{-1}} \leq 1 + C_2 \beta_2^n.
\end{equation}
Using (\ref{Eqn:Slanted}) and calculus, we have that
 there exist $C_3>0$ and $\beta_3 \in (0,1)$ such that for large enough $n$, 
\begin{align*}
 |\PP(\mathcal{E}_n) - \zeta_X(\alpha)^{-1}| & \leq \zeta_X(\alpha)^{-1}  C_3 \beta_3^n,
\end{align*}
which concludes the proof of the theorem.
\end{PfofEmptinessThm}

%

\vspace{2mm}

We are now in a position to prove Theorem \ref{Thm:ExoticBehavior}. The proof of Theorem \ref{Thm:ExoticBehavior} follows easily from Proposition \ref{Prop:UB} and an estimate obtained in the proof of Theorem \ref{Thm:EmptinessIntro}.

\vspace{2mm}

\begin{PfofExoticThm}{} Let $\cA$, $d$, and $\mathcal{G}_n$ be as the statement of the theorem.
By inclusion, we have 
\begin{equation} \label{Eqn:SimpleInclusion}
\PP(\mathcal{G}_n) \leq \PP(\Per(X_{\omega})=\varnothing). 
\end{equation}
For $\alpha > |\cA|^{-1}$, the probability that $X_{\omega}$ has no finite orbits tends to $0$ at least exponentially in $n$ by Proposition \ref{Prop:UB} (see (\ref{Eqn:ExpUBsupercrit})), which establishes the conclusion of the theorem for $\alpha > |\cA|^{-1}$. For $\alpha = |\cA|^{-1}$, the combination of (\ref{Eqn:SimpleInclusion}) with Proposition \ref{Prop:UB} yields the desired conclusion.

Now suppose $\alpha < |\cA|^{-1}$. By (\ref{Eqn:ExpEmptinessLB}) and calculus, there exist $C_4,C_5>0$ and $\beta_4,\beta_5 \in (0, 1)$ such that for large enough $n$,
\begin{equation} \label{Eqn:PerLB}
 \PP(\mathcal{E}_n) \geq \zeta_X(\alpha)^{-1} \exp( -C_4 \beta_4^n ) \geq \zeta_X(\alpha)^{-1}( 1- C_5 \beta_5^n).
\end{equation}
By (\ref{Eqn:ExpUBsubcrit}), (\ref{Eqn:ExpEmptinessLB}), and (\ref{Eqn:PerLB}), we have that there exist $C_6,C_7>0$ and $\beta_6,\beta_7 \in (0,1)$ such that for large enough $n$,
\begin{align} \label{Eqn:Wowee}
\begin{split}
 \PP(\mathcal{G}_n) & = \PP( \{ \Per(X_{\omega}) = \varnothing \} \setminus  \mathcal{E}_n ) \\
 & =  \PP(  \Per(X_{\omega}) = \varnothing  ) - \PP(\mathcal{E}_n) \\
 & \leq  \zeta_X(\alpha)^{-1}( 1 + C_6 \beta_6^n) - \zeta_X(\alpha)^{-1}( 1- C_7 \beta_7^n) \\
 & \leq \zeta_X(\alpha)^{-1} C_8 \beta_8^n,
\end{split}
\end{align}
where $C_8 = 2 \max(C_6,C_7)$ and $\beta_8 = \max(\beta_6,\beta_7)$. By (\ref{Eqn:Wowee}), we have completed the proof.
\end{PfofExoticThm}

\section{Entropy} \label{Sect:Entropy}

In this section we investigate the entropy of random $\Z^d$-SFTs. The proof of Theorem \ref{Thm:EntropyIntro}, presented in Section \ref{Sect:EntropyProof}, involves two second moment arguments, one providing an upper bound on entropy and the other providing a lower bound on entropy. To prepare for this proof, we prove several lemmas in Section \ref{Sect:EntropyLemmas}. These lemmas estimate the asymptotic behavior of the first and second moments of several random variables used to bound entropy. One random variable counts the number of patterns on $F_k$ that avoid the forbidden $F_n$-patterns (see (\ref{Eqn:CountWords})), and the other random variable counts the average number of allowed $F_k$-patterns that can fill in a fixed periodic boundary pattern (see (\ref{Eqn:CountPerWords})).

\subsection{Entropy lemmas} \label{Sect:EntropyLemmas}

Throughout this section we will assume that $k >n$. Also, as standing notation, we set $\ell =  k - n +1$. For each $u$ in $\cA^{F_k}$, let $\xi_u$ be the random variable that is $1$ when $u$ is allowed (\textit{i.e.}, when $W_n(u) \cap \cF(\omega) = \varnothing$) and $0$ otherwise. 

\subsubsection{Lemmas for upper bound on entropy} \label{Sect:EntropyLemmasUB}
Let $\phi_{n,k}$ be the number of allowed $F_k$-patterns:
\begin{equation} \label{Eqn:CountWords}
 \phi_{n,k} = \sum_{u \in \cA^{F_k}} \xi_u.
\end{equation}
The following two lemmas, which concern the expectation and variance of $\phi_{n,k}$, will be used to give an upper bound on the limiting distribution of entropy. We begin by describing the asymptotic behavior of the expectation of $\phi_{n,k}$.

\begin{lemma} \label{Lemma:EPhi}
For any $k >n$, it holds that
\begin{equation} \label{Eqn:ExpPhinkRawLB}
 \EE( \phi_{n,k} ) \geq \alpha^{\ell^d} |\cA^{F_k}| = \alpha^{\ell^d} |\cA|^{k^d}.
\end{equation}
Furthermore, if $\alpha > |\cA|^{-1}$ and $k = k(n)$ satisfies $n/k \to 0$ and $\log (k)/n \to 0$, then
\begin{equation*}
 \lim_n \EE(\phi_{n,k})^{1/k^d} = \alpha |\cA|.
\end{equation*}
\end{lemma}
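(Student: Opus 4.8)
The plan is to prove the two assertions of Lemma~\ref{Lemma:EPhi} separately: first the exact lower bound \eqref{Eqn:ExpPhinkRawLB}, then the asymptotics of $\EE(\phi_{n,k})^{1/k^d}$ when $\alpha > |\cA|^{-1}$.

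First I would establish \eqref{Eqn:ExpPhinkRawLB}. By linearity of expectation, $\EE(\phi_{n,k}) = \sum_{u \in \cA^{F_k}} \PP(\xi_u = 1) = \sum_{u} \alpha^{|W_n(u)|}$, since $u$ is allowed precisely when each of the $|W_n(u)|$ distinct $F_n$-subpatterns of $u$ is allowed, and these are independent events. Now for any $u \in \cA^{F_k}$ we have the crude bound $|W_n(u)| \le |\cC_n(F_k)| = (k-n+1)^d = \ell^d$, so $\alpha^{|W_n(u)|} \ge \alpha^{\ell^d}$. Summing over all $|\cA|^{k^d}$ patterns gives $\EE(\phi_{n,k}) \ge \alpha^{\ell^d}|\cA|^{k^d}$, which is \eqref{Eqn:ExpPhinkRawLB}.

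Next I would handle the limit. For the lower bound on $\liminf_n \EE(\phi_{n,k})^{1/k^d}$, take \eqref{Eqn:ExpPhinkRawLB} and raise to the power $1/k^d$: $\EE(\phi_{n,k})^{1/k^d} \ge \alpha^{\ell^d/k^d}|\cA|$. Since $\ell/k = (k-n+1)/k \to 1$ as $n/k \to 0$, we get $\ell^d/k^d \to 1$, hence $\alpha^{\ell^d/k^d} \to \alpha$ (here $\alpha > |\cA|^{-1} > 0$, so this is legitimate), giving $\liminf_n \EE(\phi_{n,k})^{1/k^d} \ge \alpha|\cA|$. For the matching upper bound, I would use $\EE(\phi_{n,k}) = \sum_u \alpha^{|W_n(u)|} \le \sum_{j=1}^{\ell^d} |N_{n,k}^j| \, \alpha^j$, grouping patterns by the number $j$ of distinct $F_n$-subpatterns. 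The key estimate is to bound $|N_{n,k}^j|$: a pattern with exactly $j$ distinct $F_n$-subpatterns is determined by choosing which $j$ cells of $\cC_n(F_k)$ carry the lexicographically-first occurrences together with the symbols there, plus bookkeeping, so crudely $|N_{n,k}^j| \le \binom{\ell^d}{j}|\cA|^{n^d j} \le (\ell^d)^j |\cA|^{n^d j}$; more simply, since a pattern is determined by $j$ "new" $F_n$-blocks and their positions, $|N_{n,k}^j| \le \ell^{d j}|\cA|^{n^d j}$ will suffice for a crude bound, though one must be a bit more careful. Since $\alpha|\cA| > 1$, the term $j = \ell^d$ dominates: the sum is at most $\ell^d$ times the maximal term, and one checks that $\max_j \bigl(\ell^{dj}|\cA|^{n^d j}\alpha^j\bigr)$ occurs at $j = \ell^d$ for $n$ large (because $\ell^d |\cA|^{n^d}\alpha \to \infty$), giving $\EE(\phi_{n,k}) \le \ell^d \cdot \bigl(\ell^{d}|\cA|^{n^d}\bigr)^{\ell^d}\alpha^{\ell^d}$. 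Taking $1/k^d$ powers: $\EE(\phi_{n,k})^{1/k^d} \le \ell^{d/k^d}\cdot \ell^{d\ell^d/k^d}|\cA|^{n^d\ell^d/k^d}\alpha^{\ell^d/k^d}$. The exponent of $|\cA|^{n^d}$ is the issue — $n^d\ell^d/k^d$ does not vanish. So this crude route is \emph{too lossy}; I must instead bound $|W_n(u)|$ more carefully, or rather observe that the right upper bound on $\EE(\phi_{n,k})$ is simply $\EE(\phi_{n,k}) \le |\cA|^{k^d}$ (trivially, since $\phi_{n,k} \le |\cA^{F_k}|$) combined with the observation that $|W_n(u)| = \ell^d$ for the overwhelming majority of patterns. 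Concretely: $\EE(\phi_{n,k}) \le \alpha^{m}\cdot|\cA|^{k^d}$ where $m = \min_u |W_n(u)|$; but that minimum can be small (e.g. constant patterns), so I instead split $\cA^{F_k} = A \sqcup A^c$ where $A = \{u : |W_n(u)| > \ell^d - \ell^{d-1}\cdot(\text{something})\}$, bound the contribution of $A$ by $\alpha^{\ell^d - o(k^d)}|\cA|^{k^d}$ and the contribution of $A^c$ by showing $|A^c|$ is exponentially smaller than $|\cA|^{k^d}$ in a way that beats the $\alpha^{(\text{small})}$ factor.

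The main obstacle, then, is the upper bound: I expect the cleanest argument is to note that $\EE(\phi_{n,k}) = \sum_j |N_{n,k}^j|\alpha^j$, use a sharp bound $|N_{n,k}^j| \le |\cA|^{k^d} \cdot \P(\text{a uniform random } F_k\text{-pattern has} \le j \text{ distinct blocks})$, and show that a uniform random $F_k$-pattern has $|W_n(u)| \ge \ell^d(1 - \epsilon)$ except with probability exponentially small in $k^d$ (a standard "distinct substrings" concentration / birthday-type estimate: two specified $n$-cubes coincide with probability $|\cA|^{-n^d}$, and a union bound over the $O(\ell^{2d})$ pairs shows all blocks are distinct with probability $\ge 1 - \ell^{2d}|\cA|^{-n^d} \to 1$ under $\log k / n \to 0$). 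Then $\EE(\phi_{n,k}) \le \alpha^{\ell^d(1-\epsilon)}|\cA|^{k^d} + |\cA|^{k^d}\cdot(\text{exp.\ small})$, and raising to $1/k^d$ and sending $n \to \infty$ (using $\ell/k \to 1$, $\alpha|\cA| > 1$, and finally $\epsilon \to 0$) yields $\limsup_n \EE(\phi_{n,k})^{1/k^d} \le \alpha|\cA|$. Combined with the lower bound this gives the claimed limit. The delicate point to get right is the interplay of the three hypotheses ($n/k \to 0$ and $\log k/n \to 0$): the first makes $\ell^d/k^d \to 1$ so that the exponent of $\alpha$ behaves correctly, and the second makes $\ell^{2d}|\cA|^{-n^d} \to 0$ so that "most patterns have all distinct $n$-blocks" holds, and one must verify the error term $|\cA|^{k^d}\ell^{2d}|\cA|^{-n^d}$, after taking $k^d$-th roots, tends to something $\le \alpha|\cA|$ — which it does since its root is $|\cA|(\ell^{2d}|\cA|^{-n^d})^{1/k^d} \to |\cA| \le \alpha|\cA|$.
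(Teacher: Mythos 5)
Your first part (the exact bound \eqref{Eqn:ExpPhinkRawLB}) and the $\liminf$ direction of the limit are correct and match the paper. The gap is in the $\limsup$ direction, and it is fatal as written. Your plan is to split $\cA^{F_k}$ into the patterns with (essentially) all $\ell^d$ $n$-blocks distinct and the exceptional set $A^c$, bound the first contribution by $\alpha^{(1-\epsilon)\ell^d}|\cA|^{k^d}$, and bound the second by $|A^c|$ times a trivial factor, with $|A^c| \leq |\cA|^{k^d}\,\ell^{2d}|\cA|^{-n^d}$ from the birthday/union bound. But the exceptional patterns are weighted by $\alpha^{|W_n(u)|}$ with $|W_n(u)|$ possibly far below $\ell^d$, so the best uniform weight you can use on $A^c$ is $\alpha^1$; the resulting term is $\alpha\,|\cA|^{k^d}\ell^{2d}|\cA|^{-n^d}$, whose $k^d$-th root tends to $|\cA|\cdot|\cA|^{-n^d/k^d}\to|\cA|$ because $n^d=o(k^d)$ under $k=nf(n)$ with $f(n)\to\infty$. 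Your final line asserts $|\cA|\le\alpha|\cA|$, which is backwards: since $\alpha<1$ (indeed $\alpha\in(|\cA|^{-1},1]$ and the nontrivial case is $\alpha<1$), one has $|\cA|>\alpha|\cA|$, so this error term strictly dominates the target $(\alpha|\cA|)^{k^d}$ and the upper bound fails.

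The underlying issue is that a single ``all blocks distinct with high probability'' estimate cannot close the argument; one needs the contribution $\alpha^j|N_{n,k}^j|$ controlled \emph{for every} $j$, i.e.\ a bound of the form $|N_{n,k}^j|\leq |\cA|^{(1+\epsilon)j}\cdot(\text{polynomial-in-}k\text{ factor})^{k^d/n}$, so that $\alpha^j|N_{n,k}^j|\lesssim(\alpha|\cA|^{1+\epsilon})^j$ is geometric and maximized at $j=\ell^d$. This stratified bound is exactly what the paper extracts from its repeat-cover machinery: by Lemma \ref{Lemma:GeneralCoding} every $u\in N_{n,k}^j$ admits a repeat cover $J$ with $|J|\leq 2k^d/n$, by Lemma \ref{Lemma:Nuggets} one has $|F_k\setminus\rArea{J}|\leq(1+4dn/k)j$, and by Lemma \ref{Lemma:DefineAWord} the pair $(J,u|_{F_k\setminus\rArea{J}})$ determines $u$, giving $|N_{n,k}^j|\leq|\cA|^{(1+\epsilon)j}(k^{2d})^{2k^d/n+1}$. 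Your own first instinct (the bound $|N_{n,k}^j|\le\ell^{dj}|\cA|^{n^dj}$) was correctly diagnosed as too lossy, but the replacement you chose does not stratify by $j$ and therefore cannot recover the needed estimate; some version of the repeat-cover counting (or an equivalent entropy/compression argument for patterns with few distinct $n$-blocks) is genuinely required here.
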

\begin{proof}
Recall $N_{n,k}^j = \{ u \in \cA^{F_k} : |W_n(u)| = j\},$ and note that
\begin{align} \label{Eqn:ExpPhink}
 \EE( \phi_{n,k} ) = \sum_{u \in \cA^{F_k} } \EE( \xi_u ) = \sum_{u \in \cA^{F_k}} \alpha^{|W_n(u)|} = \sum_{j=1}^{\ell^d} \alpha^j \bigl|N_{n,k}^j \bigr|.
\end{align}
Then by (\ref{Eqn:ExpPhink}),
\begin{equation*} 
 \EE( \phi_{n,k} ) \geq \alpha^{\ell^d} \bigl|\cA^{F_k} \bigr| = \alpha^{\ell^d} |\cA|^{k^d},
\end{equation*}
which verifies (\ref{Eqn:ExpPhinkRawLB}).

Now assume that  $\alpha > |\cA|^{-1}$ (so that $\alpha |\cA| >1$) and $k = k(n)$ satisfies $n/k \to 0$ and $ \log (k) / n \to 0$. From (\ref{Eqn:ExpPhinkRawLB}), we see that
\begin{equation} \label{Eqn:ExpPhinkLB}
 \liminf_n \Bigl( \EE(\phi_{n,k}) \Bigr)^{1/k^d} \geq \liminf_n \alpha^{(\ell/k)^d} |\cA| = \alpha |\cA|,
\end{equation}
since $\ell/k = 1 - n/k +1/k$, which tends to one as $n$ tends to infinity.

Let us now bound the cardinality of $N_{n,k}^j$ (for each $j \in [1,\ell^d]$) from above. Let $\epsilon >0$, and consider $j$ in $[1,\ell^d]$. By Lemma \ref{Lemma:GeneralCoding}, for each pattern $u$ in $N_{n,k}^j$, there exists a repeat cover $J$ of $u$ such that $|J| \leq 2 k^d /n$. By Lemma \ref{Lemma:Nuggets} and the fact that $n/k \to 0$, for $n$ large enough, any repeat cover $J$ of a pattern $u$ in $N_{n,k}^j$ satisfies $|F_k \setminus \rArea{J}| \leq (1+4dn/k)j \leq (1+\epsilon)j$. Assume now that we have $n$ large enough for these inequalities to hold. By Lemma \ref{Lemma:DefineAWord}, a pattern $u$ in $N_{n,k}^j$ is determined by any repeat cover $J$ for $u$ and the pattern $w = u|_{F_k \setminus \rArea{J}}$. Thus, bounding $|N_{n,k}^j|$ by the number of pairs $(J,w)$, where $J \subset \cC_n(F_k) \times \cC_n(F_k)$ with $|J| \leq 2 k^d /n$ and $w \in \cA^{F_k \setminus \rArea{J}}$, we obtain
\begin{equation} \label{Eqn:BoundNnkj}
 |N_{n,k}^j| \leq |\cA|^{(1+\epsilon)j} (k^{2d})^{2 k^d/n+1}.
\end{equation}
Here $(k^{2d})^{2 k^d/n+1}$ is an upper bound on the number of subsets $J$ of $\cC_n(F_k) \times \cC_n(F_k)$ such that $|J| \leq 2 k^d/n$, and $|\cA|^{(1+\epsilon)j}$ is an upper bound on the number of patterns in $\cA^{F_k \setminus \rArea{J}}$ where $|F_k \setminus \rArea{J}| \leq (1+\epsilon)j$.



Let $p_1(k) = k^{4d+1}$. Then by (\ref{Eqn:BoundNnkj}), for any $\epsilon >0$ and large enough $n$,  we have that
\begin{align}
\begin{split} \label{Eqn:ExpPhinkOne}
 \EE( \phi_{n,k} ) &  = \sum_{j=1}^{\ell^d} \alpha^j |N_{n,k}^j| \\
 & \leq \alpha^{\ell^d} |\cA|^{k^d} + \sum_{j = 1}^{\ell^d-1} \alpha^j |\cA|^{(1+\epsilon)j} p(k)^{k^d/n} \\
 & \leq \alpha^{\ell^d} |\cA|^{k^d} + p_1(k)^{k^d/n} \sum_{j=1}^{\ell^d-1} (\alpha |\cA|^{1+\epsilon})^j \\
 & \leq \alpha^{\ell^d} |\cA|^{k^d} + p_1(k)^{k^d/n} ( \alpha |\cA|^{1+\epsilon})^{\ell^d} (\alpha |\cA|^{1+\epsilon} - 1)^{-1} \\
 & \leq 2 \max \biggl( \alpha^{\ell^d} |\cA|^{k^d}, \; p_1(k)^{k^d/n} ( \alpha |\cA|^{1+\epsilon})^{\ell^d} (\alpha |\cA|^{1+\epsilon} - 1)^{-1} \biggr).
\end{split}
\end{align}
By (\ref{Eqn:ExpPhinkOne}) and our assumptions on $\alpha$ (\textit{i.e.}, $\alpha |\cA| > 1$) and on $k$ (\textit{i.e.}, $n/k \to 0$ and $\log (k)/n  \to 0$), we obtain
\begin{align*}
 \begin{split}
  \limsup_n \Bigl( \EE( \phi_{n,k} ) \Bigr)^{1/k^d} \leq \alpha |\cA|^{1+\epsilon}.
 \end{split}
\end{align*}
As $\epsilon >0$ was arbitrary, we see that
\begin{equation} \label{Eqn:ExpPhinkUB}
 \limsup_n \Bigl( \EE( \phi_{n,k} ) \Bigr)^{1/k^d} \leq \alpha |\cA|.
\end{equation}
From (\ref{Eqn:ExpPhinkLB}) and (\ref{Eqn:ExpPhinkUB}), we have that
\begin{equation*}
 \lim_n \Bigl( \EE( \phi_{n,k} ) \Bigr)^{1/k^d} = \alpha |\cA|.
\end{equation*}
\end{proof}

In the following lemma, we show that the variance of $\phi_{n,k}$ is small compared to the square of its expectation.

\begin{lemma} \label{Lemma:VarPhi}
 Suppose $\alpha |\cA| > 1$ and $k = k(n) = n f(n)$ with $f(n) \to \infty$ and $f(n) = o( (n/ \log n)^{1/d} )$. Then there exist $K_1>0$ and $\rho_1>0$ such that for large enough $n$,
\begin{equation*}
 \frac{\Var(\phi_{n,k})}{\EE(\phi_{n,k})^2} \leq K_1 \exp( - \rho_1 n^d ).
\end{equation*}
\end{lemma}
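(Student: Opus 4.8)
The plan is a second moment argument. Since $\phi_{n,k}=\sum_{u\in\cA^{F_k}}\xi_u$ and the event $\{\xi_u=\xi_v=1\}$ is exactly the event that every pattern in $W_n(u)\cup W_n(v)$ is allowed, one has $\EE(\xi_u\xi_v)=\alpha^{|W_n(u)\cup W_n(v)|}$ and $\EE(\xi_u)\EE(\xi_v)=\alpha^{|W_n(u)|+|W_n(v)|}$; writing $s=s(u,v):=|W_n(u)\cap W_n(v)|$ this yields
\[
\Var(\phi_{n,k})=\sum_{u,v}\alpha^{|W_n(u)\cup W_n(v)|}\bigl(1-\alpha^{s(u,v)}\bigr)\ \le\ \sum_{\substack{u,v\\ W_n(u)\cap W_n(v)\ne\varnothing}}\alpha^{|W_n(u)\cup W_n(v)|},
\]
because disjoint pairs contribute $0$. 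So it suffices to bound the sum over non-disjoint pairs by $e^{-\rho_1 n^d}\EE(\phi_{n,k})^2$ up to a sub-exponential factor. First I would stratify by $s\ge 1$ and use $\alpha^{|W_n(u)\cup W_n(v)|}=\alpha^{-s}\alpha^{|W_n(u)|+|W_n(v)|}$, reducing the task to a good estimate, for each $u$ and each $s$, of $\sum_{v:\,s(u,v)\ge s}\alpha^{|W_n(v)|}$.

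The crux is the per-$u$ estimate
\[
\sum_{v:\,s(u,v)\ge s}\alpha^{|W_n(v)|}\ \le\ e^{o(n^d)}\,|\cA|^{-\max(n^d,s)}\,\EE(\phi_{n,k}),
\]
with the $o(n^d)$ uniform in $u$ and $s$. To obtain it, note that if $s(u,v)\ge s$ then the region $R\subset F_k$ formed by the $n$-cubes of $v$ carrying a pattern of $W_n(u)$ contains $s$ distinct $n$-cubes, so $|R|\ge\max(n^d,s)$; applying Lemma~\ref{Lemma:GeneralCoding} (with $F_k$ as a $d$-dimensional face) replaces this family by a sub-collection of size $\le 2|R|/n\le 2k^d/n$ with the same union $R$. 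Recording this sub-collection, the element of $W_n(u)$ attached to each of its cubes, and the pattern $v|_{F_k\setminus R}$ determines $v$ (a relative form of Lemma~\ref{Lemma:DefineAWord}); since there are at most $(\ell^d)^{2k^d/n}$ such sub-collections and at most $|W_n(u)|^{2k^d/n}\le(\ell^d)^{2k^d/n}$ labellings, and since $k=nf(n)$ with $f(n)=o\bigl((n/\log n)^{1/d}\bigr)$ forces $(k^d/n)\log k=o(n^d)$, this bookkeeping costs only $e^{o(n^d)}$. It then remains to bound the conditioned sum $\sum_{v:\,v|_R\text{ fixed}}\alpha^{|W_n(v)|}$ by $e^{o(n^d)}|\cA|^{-|R|}\EE(\phi_{n,k})$, which I would derive from the cardinality bound $|N^j_{n,k}|\le|\cA|^{(1+\epsilon)j}(k^{2d})^{2k^d/n+1}$ established in the proof of Lemma~\ref{Lemma:EPhi} (via Lemmas~\ref{Lemma:GeneralCoding}, \ref{Lemma:Nuggets}, \ref{Lemma:DefineAWord}), the lower bound $\EE(\phi_{n,k})\ge\alpha^{\ell^d}|\cA|^{k^d}$ of Lemma~\ref{Lemma:EPhi}, and the hypothesis $\alpha|\cA|>1$, which makes the weighted count of patterns with a prescribed $|W_n(v)|$ dominated by the generic value $|W_n(v)|=\ell^d$ and hence lose the full factor $|\cA|^{|R|}$ when $v|_R$ is fixed.

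Granting the per-$u$ estimate, summing over $u$ gives $\sum_{u,v:\,s(u,v)\ge s}\alpha^{|W_n(u)|+|W_n(v)|}\le e^{o(n^d)}|\cA|^{-\max(n^d,s)}\EE(\phi_{n,k})^2$, so
\[
\Var(\phi_{n,k})\ \le\ e^{o(n^d)}\,\EE(\phi_{n,k})^2\sum_{s\ge 1}\alpha^{-s}|\cA|^{-\max(n^d,s)}.
\]
Since $\alpha|\cA|>1$, the head $\sum_{1\le s\le n^d}\alpha^{-s}|\cA|^{-n^d}\le 2\alpha^{-n^d}|\cA|^{-n^d}$ and the tail $\sum_{s>n^d}(\alpha|\cA|)^{-s}$ are both $O\bigl((\alpha|\cA|)^{-n^d}\bigr)$, whence $\Var(\phi_{n,k})\le e^{o(n^d)}(\alpha|\cA|)^{-n^d}\EE(\phi_{n,k})^2\le K_1 e^{-\rho_1 n^d}\EE(\phi_{n,k})^2$ for any $\rho_1<\log(\alpha|\cA|)$ and $n$ large. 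The main obstacle is the per-$u$ estimate: one needs the full geometric gain $|\cA|^{-\max(n^d,s)}$ — the factor $|\cA|^{-s}$ for large $s$ being exactly what absorbs the blow-up $\alpha^{-s}$ — while paying only a sub-exponential price for describing which $n$-cubes of $v$ coincide with cubes of $u$, and controlling that bookkeeping is precisely where the growth restriction $f(n)=o\bigl((n/\log n)^{1/d}\bigr)$ is used.
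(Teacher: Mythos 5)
Your opening steps (the covariance identity, the reduction to pairs with $W_n(u)\cap W_n(v)\neq\varnothing$, the use of Lemma~\ref{Lemma:GeneralCoding} plus a relative form of Lemma~\ref{Lemma:DefineAWord} for bookkeeping at cost $e^{o(n^d)}$, and the final geometric summation) are all sound and match the paper's strategy, but the step you yourself identify as the crux fails as stated. The claimed conditioned bound
\begin{equation*}
\sum_{v:\,v|_R\text{ fixed}}\alpha^{|W_n(v)|}\ \le\ e^{o(n^d)}\,|\cA|^{-|R|}\,\EE(\phi_{n,k})
\end{equation*}
is false whenever $|R|$ is large and the fixed pattern on $R$ has few distinct $F_n$-subpatterns. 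Concretely, let $\alpha\in(|\cA|^{-1},1)$, let $u$ contain the constant pattern $a^{F_n}$, and consider the bookkeeping class in which every cube of the selected sub-collection is labelled $a^{F_n}$, so that $R=F_k$ and $v|_R\equiv a$. The conditioned sum then consists of the single term $v=a^{F_k}$ and equals $\alpha^{|W_n(a^{F_k})|}=\alpha$, while the right-hand side is $e^{o(n^d)}|\cA|^{-k^d}\EE(\phi_{n,k})=e^{o(n^d)}(\alpha+o(1))^{k^d}$ by Lemma~\ref{Lemma:EPhi}; since $k^d=n^df(n)^d\gg n^d$, the two sides differ by a factor $e^{\Theta(k^d)}$, far beyond any $e^{o(n^d)}$ slack. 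The same failure occurs whenever $k^d-|R|$ is much smaller than $k^d-\ell^d$: the conditioned sum is then dominated by $v$ with $|W_n(v)|$ far below the generic value $\ell^d$, whose weights $\alpha^{|W_n(v)|}$ dwarf $\alpha^{\ell^d}$, so fixing $v|_R$ does \emph{not} cost the full factor $|\cA|^{|R|}$ out of $\EE(\phi_{n,k})$. Your absorption of the blow-up $\alpha^{-s}$ for large $s$ relies on exactly this gain with $|R|\ge s$ large, so the per-$u$ estimate is not established and the argument does not close.

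The paper sidesteps this by never separating $u$ from $v$: it treats $(u,v)$ as a single pattern on $F_k\sqcup F_k'$, stratifies by $j=|W_n(u)\cup W_n(v)|$ rather than by $s=|W_n(u)\cap W_n(v)|$, and shows that the repeat region $V$ of the pair satisfies $|V|\ge n^d+(2\ell^d-j)-1$ (one full $n$-cube from the lexicographically minimal repeat, plus one new point per additional repeat), whence $|D_{n,k}^j|\le p_2(k)^{k^d/n}|\cA|^{2k^d-2\ell^d-n^d+j+1}$. The weight attached to each stratum is then $\alpha^j$ rather than $\alpha^{-s}\alpha^{|W_n(u)|+|W_n(v)|}$, the hypothesis $\alpha|\cA|>1$ pushes the geometric sum $\sum_j(\alpha|\cA|)^j$ to its top term $j=2\ell^d$, and the uniform gain $|\cA|^{-n^d}$ falls out against $\EE(\phi_{n,k})^2\ge\alpha^{2\ell^d}|\cA|^{2k^d}$. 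In that formulation a $v$ that overlaps $u$ heavily is automatically cheap to encode because its cubes repeat into $u$ --- precisely the information your conditioned sum discards. If you wish to retain the per-$u$ formulation, you would in effect have to prove it by this joint counting anyway, so I recommend adopting the paper's parametrization by $j$.
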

\begin{proof}
 Suppose that $\alpha$ and $k=k(n)$ are as above.
We introduce the notation
\begin{equation*}
D_{n,k}^j = \{ (u,v) \in \cA^{F_k} \times \cA^{F_k} : W_n(u) \cap W_n(v) \neq \varnothing, \, |W_n(u) \cup W_n(v) | = j \}.
\end{equation*}

Observe that the covariance of $\xi_u$ and $\xi_v$ is
\begin{align}
\begin{split} \label{Eqn:Zowee}
 \EE\biggl( \bigl( \xi_u - \EE(\xi_u) \bigr) \bigl( \xi_v - \EE(\xi_v) \bigr) \biggr) & = \EE\bigl( \xi_u \xi_v \bigr) - \EE(\xi_u) \EE(\xi_v) \\
 & = \alpha^{|W_n(u) \cup W_n(v)|} - \alpha^{|W_n(u)|+|W_n(v)|} \\
 & = \alpha^{|W_n(u) \cup W_n(v)|} \bigl( 1- \alpha^{|W_n(u) \cap W_n(v)|} \bigr).
\end{split}
\end{align}
Then, since the variance of a sum is the sum of the covariances, we have
\begin{align}
\begin{split} \label{Eqn:VarPhinkRawUB}
 \Var(\phi_{n,k}) & = \sum_{u,v \in \cA^{F_k}} \alpha^{|W_n(u) \cup W_n(v)|} \Bigl( 1 - \alpha^{|W_n(u) \cap W_n(v)|} \Bigr) \\
 & \leq \sum_{\substack{u,v \in \cA^{F_k} \\ W_n(u) \cap W_n(v) \neq \varnothing}} \alpha^{|W_n(u) \cup W_n(v)|} \\
 & = \sum_{j = 1}^{2\ell^d -1} \alpha^j |D_{n,k}^j|.
\end{split}
\end{align}
Then by (\ref{Eqn:ExpPhinkRawLB}) and (\ref{Eqn:VarPhinkRawUB}), we have that
\begin{align}
 \begin{split} \label{Eqn:VarPhinkFirst}
  \frac{\Var(\phi_{n,k})}{\EE(\phi_{n,k})^2} & \leq \frac{\sum_{j = 1}^{2\ell^d -1} \alpha^j |D_{n,k}^j|}{ \alpha^{2 \ell^d} |\cA|^{2k^d}} 
 \end{split}
\end{align}

Let $(u,v)$ be in $D_{n,k}^j$. Let $F'_k$ be a disjoint copy of $F_k$; for concreteness, we take $F'_k = F_k + k\mathbf{1}$. For notational convenience, we assume that $v \in \cA^{F'_k}$, so that we may think of $(u,v)$ as an element of $\cA^{F_k \sqcup F'_k}$ and use the terminology of repeats as in Section \ref{Sect:Repeats}. 
Let $r$ be the total number of $n$-repeats in $(u,v)$. Let $p$ be in $F_{\ell} \cup (F_{\ell} +k\mathbf{1})$, and let $S = F_n + p - \1$. Note that either there exists a unique repeat $(S_1,S_2)$ for $(u,v)$ such that $S = S_2$ or else $S$ is the lexicographically minimal appearance of the pattern $(u,v)|_S$ in $(u,v)$. Thus, we have that
\begin{equation*}
 2 \ell^d = j + r,
\end{equation*}
and hence $r = 2 \ell^d - j$. 

Now let $V$ be the repeat region $\rArea{J}$, where $J$ is any repeat cover for $(u,v)$ (recall that the set $\rArea{J} $ does not depend on the choice of $J$).
Let $(S^*_1,S^*_2)$ be the repeat in $(u,v)$ such that $S^*_2$ is lexicographically minimal. For any $n$-cube $S$ in $\cC_n(F_k \sqcup F'_k)$, let $M(S)$ denote the lexicographically maximal element of $S$. Define a map $\Phi$ from the $n$-repeats in $(u,v)$ into the power set of $V$ as follows:
\begin{equation*}
 \Phi(S_1,S_2) = \left\{ \begin{array}{ll}
                          S_2, & \text{ if } (S_1,S_2) = (S^*_1,S^*_2) \\
                          \{M(S_2)\}, & \text{ otherwise}.
                         \end{array}
                   \right.
\end{equation*}
Note that if $(S_1,S_2)$ and $(S_3,S_4)$ are repeats in $(u,v)$, then $\Phi(S_1,S_2) \cap \Phi(S_3,S_4) = \varnothing$. Thus, we have shown that $|V| \geq n^d + (r-1)$, and therefore
\begin{equation*}
 |V| \geq n^d + (r-1) = 2 \ell^d - j + n^d -1.
\end{equation*}
Then
\begin{equation} \label{Eqn:Mercer}
 |(F_k \sqcup F'_k) \setminus V| = 2 k^d - |V| \leq 2 k^d - 2 \ell^d - n^d + j + 1.
\end{equation}
By Lemma \ref{Lemma:GeneralCoding}, there exists a repeat cover $J$ of $(u,v)$ such that $|J| \leq 4 k^d /n$. 
By Lemma \ref{Lemma:DefineAWord}, any element $(u,v)$ in $D_{n,k}^j$ is uniquely determined by a repeat cover $J$ for $(u,v)$ and the pattern $w = (u,v)|_{(F_k \sqcup F'_k) \setminus \rArea{J}}$. Thus, we may bound the cardinality of $D_{n,k}^j$ by the number of pairs $(J,w)$, where $J \subset \cC_n(F_k \sqcup F'_k) \times \cC_n(F_k \sqcup F'_k)$ satisfies $|J| \leq 4 k^d/n$ and $w$ is an element of $\cA^{(F_k \sqcup F'_k) \setminus \rArea{J}}$ (and we know that $|(F_k \sqcup F'_k) \setminus \rArea{J}|$ may be bounded as in (\ref{Eqn:Mercer})). In this way, we obtain  that 
\begin{align} 
\begin{split}\label{Eqn:DnkBound}
|D_{n,k}^j| & \leq (4k^{2d})^{4 k^d/n+1}  |\cA|^{2 k^d - 2 \ell^d - n^d + j + 1} \\
& \leq  p_2(k)^{k^d/n}  |\cA|^{2 k^d - 2 \ell^d - n^d + j + 1},
\end{split}
\end{align}
where $4k^{2d}$ is an upper bound on the number of pairs of $n$-cubes contained in $F_k \sqcup F'_k$ and $p_2(k) = (4 k^{2d})^5$.

By 
(\ref{Eqn:VarPhinkFirst}) and (\ref{Eqn:DnkBound}), we have
\begin{align}
\begin{split} \label{Eqn:VarPhinkSecond}
 \frac{\Var(\phi_{n,k})}{\EE(\phi_{n,k})^2} & \leq \frac{\sum_{j = 1}^{2\ell^d -1} \alpha^j |D_{n,k}^j|}{ \alpha^{2 \ell^d} |\cA|^{2k^d}} \\
 & \leq \frac{ p_2(k)^{k^d/n} |\cA|^{2 k^d - 2 \ell^d - n^d + 1} \sum_{j=1}^{2\ell^d-1} (\alpha |\cA|)^{j} }{ \alpha^{2 \ell^d} |\cA|^{2k^d}} \\
 & \leq \frac{ p_2(k)^{k^d/n} |\cA|^{2 k^d - 2 \ell^d - n^d + 1} (\alpha |\cA|)^{2 \ell^d} (\alpha |\cA| - 1)^{-1} }{ \alpha^{2 \ell^d} |\cA|^{2k^d}} \\
 & = \frac{ p_2(k)^{k^d/n} |\cA| (\alpha |\cA|-1)^{-1} }{ |\cA|^{n^d} }.
 \end{split}
\end{align}
By hypothesis, we have $k = n f(n)$, and we may write $f(n) = g_1(n) (n/\log n)^{1/d}$ with $g_1(n) \to 0$. Note that for large enough $n$, we must have $k \leq n^2$ and $p_2(k) \leq k^{10d+1}$. Then by (\ref{Eqn:VarPhinkSecond}), for $n$ large enough, we have
\begin{align}
\begin{split} \label{Eqn:VarPhinkThird}
 \frac{\Var(\phi_{n,k})}{\EE(\phi_{n,k})^2} & \leq \frac{ p_2(k)^{k^d/n} |\cA| (\alpha |\cA|-1)^{-1} }{ |\cA|^{n^d} } \\
  & = |\cA| (\alpha |\cA| - 1)^{-1} \exp \biggl( \frac{k^d}{n} \log p_2(k) - n^d \log |\cA| \biggr) \\
  & \leq |\cA| (\alpha |\cA| - 1)^{-1} \exp \biggl( (10d+1) \frac{n^d g_1(n)^d}{\log n} \log k - n^d \log |\cA| \biggr) \\
  & \leq |\cA| (\alpha |\cA| - 1)^{-1} \exp \biggl( 2(10d+1) n^d g_1(n)^d - n^d \log |\cA| \biggr) \\
  & =  |\cA| (\alpha |\cA| - 1)^{-1} \exp \biggl( n^d (2(10d+1) g_1(n)^d - \log |\cA|) \biggr).
 \end{split}
\end{align}

By (\ref{Eqn:VarPhinkThird}) and the fact that $g_1(n) \to 0$, we see that there exist $K_1>0$ and $\rho_1>0$ such that for large enough $n$,
\begin{equation*}
 \frac{\Var(\phi_{n,k})}{\EE(\phi_{n,k})^2} \leq K_1 \exp( - \rho_1 n^d ).
\end{equation*}
\end{proof}

\subsubsection{Lemmas for lower bound on entropy} \label{Sect:EntropyLemmasLB}

The next two lemmas will be used to give a lower bound on the limiting distribution of entropy. In order to state these lemmas, we need some additional notation. 
Let $V_{n,k}$ be the (nonrandom) set of periodic patterns on $\partial_n F_k$ (with period $\ell$ in each cardinal direction):
\begin{equation*}
V_{n,k} = \{ u \in \cA^{\partial_n F_k} :  u_{t+\ell e_i} = u_t \text{ whenever } t, t+\ell e_i \in \partial_n F_k \}.
\end{equation*}
Also, let $P_{n,k}$ be the set of $F_k$-patterns with periodic $n$-boundaries:
\begin{equation*}
P_{n,k} = \{ u \in \cA^{F_k} : u|_{\partial_n F_k} \in V_{n,k}\}.
\end{equation*}
Note for future reference that $|P_{n,k}|=|\cA|^{\ell^d}$.
Let $\psi_{n,k}$ be the average number of ways of filling in a periodic boundary (\textit{i.e.}, a boundary from $V_{n,k}$):
\begin{equation} \label{Eqn:CountPerWords}
\psi_{n,k} = \frac{1}{|V_{n,k}|} \sum_{b \in P_{n,k}} \xi_{b}.
\end{equation}
Note that $\psi_{n,k}$ is a random quantity. 
The following lemma shows that $\psi_{n,k}$ may be used to give a lower bound on the entropy of the random $\Z^d$-SFT.

\begin{lemma} \label{Lemma:LowerBdLemma}
 For $k>n$ and any $\omega$ in $\Omega_n$, it holds that
\begin{equation*}
 \frac{1}{k^d} \log \psi_{n,k}(\omega) \leq h_{\text{per}}(X_{\omega}) \leq h(X_{\omega}).
\end{equation*}
\end{lemma}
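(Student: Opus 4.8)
The two inequalities are handled separately, and the right‑hand one is essentially free. Since the set $\mathcal{P}$ of totally periodic points of $X_\omega$ is contained in $X_\omega$, we have $\{x|_{F_m}:x\in\mathcal{P}\}\subseteq\{x|_{F_m}:x\in X_\omega\}$ for every $m$; taking logarithms, dividing by $m^d$, and passing to the limit superior gives $h_{\text{per}}(X_\omega)\le\limsup_m \tfrac1{m^d}\log\#\{x|_{F_m}:x\in X_\omega\}=h(X_\omega)$, where the last equality holds because that limit exists (it is the definition of $h$). The case $X_\omega=\varnothing$ is trivial, both sides being $-\infty$; likewise, if $\psi_{n,k}(\omega)=0$ the left‑hand inequality of the lemma is trivially true, so from now on I would assume $\psi_{n,k}(\omega)>0$.

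For the left‑hand inequality I would first rewrite $\psi_{n,k}$. For $v\in V_{n,k}$ put $A_v=\{b\in\cA^{F_k}:b|_{\partial_n F_k}=v\text{ and }\xi_b(\omega)=1\}$. Then $\psi_{n,k}(\omega)=|V_{n,k}|^{-1}\sum_{v\in V_{n,k}}|A_v|$ is the average of the integers $|A_v|$, so $M:=\max_v|A_v|$ satisfies $M\ge\psi_{n,k}(\omega)$, and $M\ge 1$ since $\psi_{n,k}(\omega)>0$. Fix a $v$ with $|A_v|=M$. The core of the argument is a periodic tiling construction. Write $\ell=k-n+1\le k$, so that the translates $\{F_k+\ell p:p\in\Z^d\}$ cover $\Z^d$, and any two distinct translates overlap only inside the $n$‑boundary: the overlap of $F_k+\ell p$ with $F_k+\ell(p+e_j)$ has width $n-1$ in direction $e_j$ and, in the local coordinates of the two cubes, lies in $\{t\in F_k:t_j\ge k-n+1\}\subseteq\partial_n F_k$ respectively $\{s\in F_k:s_j\le n\}\subseteq\partial_n F_k$, with $t=s+\ell e_j$. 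I would then check that any $g:\Z^d\to A_v$ determines a configuration $x_g\in\cA^{\Z^d}$ by $x_g(z)=g(p)(z-\ell p)$ for any $p$ with $z\in F_k+\ell p$: well‑definedness reduces to the single‑step overlap above, where it holds because every $b\in A_v$ agrees with $v$ on $\partial_n F_k$ and $v_{s+\ell e_j}=v_s$ by the definition of $V_{n,k}$. Next, every $n$‑cube $F_n+w$ is contained in the single translate $F_k+\ell p$ with $p_i=\lfloor w_i/\ell\rfloor$ (this is precisely where $\ell=k-n+1$ is used), so $x_g|_{F_n+w}$ is a subpattern of $g(p)\in A_v$ and hence is not forbidden; therefore $x_g\in X_\omega$. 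Choosing $g$ periodic with period $N$ in each direction makes $x_g$ periodic with period $N\ell$, hence totally periodic, so $x_g\in\mathcal{P}$.

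It remains to count. Set $m=(N-1)\ell+k$, so that $F_m$ contains $F_k+\ell p$ for every $p\in[0,N-1]^d$. The assignment $g\mapsto x_g|_{F_m}$ is injective on period‑$N$ maps $g$, because $x_g$ restricted to the subcube $F_k+\ell p$ recovers $g(p)$ for each $p\in[0,N-1]^d$, and those values determine $g$. Hence $\#\{x|_{F_m}:x\in\mathcal{P}\}\ge M^{N^d}$, and letting $N\to\infty$ along this subsequence gives $h_{\text{per}}(X_\omega)\ge\lim_N\frac{N^d\log M}{((N-1)\ell+k)^d}=\frac{\log M}{\ell^d}$. Since $\log M\ge 0$ and $\ell\le k$, this is at least $\frac{\log M}{k^d}\ge\frac{\log\psi_{n,k}(\omega)}{k^d}$, which is the desired bound.

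The main obstacle is the coordinate bookkeeping in the tiling step: verifying both that $x_g$ is well defined — i.e. that the overlaps of adjacent translates fall inside $\partial_n F_k$, where all elements of $A_v$ coincide with the $\ell$‑periodic pattern $v$ — and that every $n$‑cube is swallowed by a single translate $F_k+\ell p$. Each of these reduces to the exact identity $\ell=k-n+1$ together with the precise shape of $V_{n,k}$, and each is elementary but fiddly. A minor wrinkle is that the count naturally produces the denominator $\ell^d$ rather than $k^d$; this is harmless because $\log M\ge 0$ and $\ell\le k$.
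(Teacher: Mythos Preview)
Your proof is correct and follows essentially the same approach as the paper's: pick a boundary $v\in V_{n,k}$ maximizing the number $M$ of allowed fillings, tile $\Z^d$ by the translates $F_k+\ell p$ with $\ell=k-n+1$, place an arbitrary element of $A_v$ in each translate to obtain points of $X_\omega$ (periodic choices giving periodic points), and deduce $h_{\text{per}}(X_\omega)\ge\frac{1}{\ell^d}\log M\ge\frac{1}{k^d}\log\psi_{n,k}(\omega)$. The paper simply asserts the gluing step without the coordinate bookkeeping you supply, so your write-up is in fact more detailed than the original.
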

\begin{proof}
 Let $k>n$, and let $\omega$ be in $\Omega_n$. 

For a periodic boundary $b$ in $V_{n,k}$, let  $W(b)$ be the set of patterns $u$ in $W_k(X_{\omega})$ such that $u|_{\partial_n F_k} = b$. Since the forbidden patterns have shape $F_n$ and the patterns in $V_{n,k}$ have thickness $n$, we have that
\begin{equation*}
 \psi_{n,k}(\omega) = \frac{1}{|V_{n,k}|} \sum_{b \in V_{n,k}} |W(b)|.
\end{equation*}
Since the average of a finite set of real numbers is less than or equal to its maximum, there exists $b$ in $V_{n,k}$ such that $|W(b)| \geq \psi_{n,k}(\omega)$, and we fix such a pattern $b$. 

By definition, $X_{\omega}$ is a $\Z^d$-SFT, and the forbidden patterns that define $X_{\omega}$ all have shape $F_n$. Thus, if $\{w_p\}_{p \in \Z^d}$ is any collection of patterns in $W(b)$ indexed by $\Z^d$, then there is a point $x$ in $X_{\omega}$ such that $x|_{F_k + \ell p} = w_p$ for all $p$ in $\Z^d$. In particular, if $\{w_p\}_{p \in \Z^d}$ is totally periodic, then the corresponding point $x$ has a finite orbit. Therefore
\begin{equation*}
 \frac{1}{\ell^d} \log |W(b)| \leq h_{\text{per}}(X_{\omega}) \leq h(X_{\omega}).
\end{equation*}
Then, using that $|W(b)| \geq \psi_{n,k}(\omega)$ and $\ell \leq k$, we see that
\begin{equation*}
 \frac{1}{k^d} \log \psi_{n,k}(\omega) \leq  \frac{1}{\ell^d} \log |W(b)| \leq h_{\text{per}}(X_{\omega}) \leq h(X_{\omega}),
\end{equation*}
as desired.
\end{proof}

In the following lemma, we describe the asymptotic behavior of the expectation of $\psi_{n,k}$.

\begin{lemma} \label{Lemma:EPsi}
 For any $k >n$, it holds that
\begin{equation} \label{Eqn:EphiRawLB}
 \EE( \psi_{n,k} ) \geq |V_{n,k}|^{-1} \alpha^{\ell^d} |P_{n,k}| = |V_{n,k}|^{-1} \alpha^{\ell^d} |\cA|^{\ell^d}.
\end{equation}
Further, if $\alpha |\cA| > 1$ and $k = k(n)$ satisfies $n/k \to 0$ and $\log (k) / n \to 0$, then
\begin{equation*}
 \lim_n \EE(\psi_{n,k})^{1/k^d} = \alpha |\cA|.
\end{equation*}
\end{lemma}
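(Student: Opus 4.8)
The plan is to follow closely the template of the proof of Lemma~\ref{Lemma:EPhi}, using only linearity of expectation together with the identity $\EE(\xi_b)=\alpha^{|W_n(b)|}$, and then sandwiching $\psi_{n,k}$ between an explicit lower bound and a multiple of $\phi_{n,k}$. First I would expand, via (\ref{Eqn:CountPerWords}),
\begin{equation*}
\EE(\psi_{n,k})=\frac{1}{|V_{n,k}|}\sum_{b\in P_{n,k}}\EE(\xi_b)=\frac{1}{|V_{n,k}|}\sum_{b\in P_{n,k}}\alpha^{|W_n(b)|}.
\end{equation*}
The raw lower bound (\ref{Eqn:EphiRawLB}) is then immediate: every $b\in\cA^{F_k}$ satisfies $|W_n(b)|\le|\cC_n(F_k)|=\ell^d$, so $\alpha^{|W_n(b)|}\ge\alpha^{\ell^d}$, and together with $|P_{n,k}|=|\cA|^{\ell^d}$ this gives $\EE(\psi_{n,k})\ge|V_{n,k}|^{-1}\alpha^{\ell^d}|P_{n,k}|=|V_{n,k}|^{-1}\alpha^{\ell^d}|\cA|^{\ell^d}$.

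For the asymptotic statement I would establish matching $\liminf$ and $\limsup$ bounds for $\EE(\psi_{n,k})^{1/k^d}$. Taking $k^d$-th roots in (\ref{Eqn:EphiRawLB}) yields $\EE(\psi_{n,k})^{1/k^d}\ge|V_{n,k}|^{-1/k^d}(\alpha|\cA|)^{\ell^d/k^d}$. Since $\ell/k\to1$ and $\alpha|\cA|>1$, the second factor tends to $\alpha|\cA|$. For the first factor, I would note $V_{n,k}\subseteq\cA^{\partial_n F_k}$, so $\log|V_{n,k}|\le|\partial_n F_k|\log|\cA|$, and that for $n/k$ small $|\partial_n F_k|=k^d-(k-2n)^d=k^d\bigl(1-(1-2n/k)^d\bigr)=o(k^d)$ precisely because $n/k\to0$; hence $|V_{n,k}|^{1/k^d}\to1$, giving $\liminf_n\EE(\psi_{n,k})^{1/k^d}\ge\alpha|\cA|$.

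For the upper bound I would use that $P_{n,k}\subseteq\cA^{F_k}$ and $|V_{n,k}|\ge1$ (it contains the constant patterns), so $\EE(\psi_{n,k})\le\sum_{u\in\cA^{F_k}}\alpha^{|W_n(u)|}=\EE(\phi_{n,k})$ by (\ref{Eqn:ExpPhink}). Since the hypotheses here ($\alpha|\cA|>1$, $n/k\to0$, $\log(k)/n\to0$) are exactly those of Lemma~\ref{Lemma:EPhi}, that lemma gives $\limsup_n\EE(\psi_{n,k})^{1/k^d}\le\lim_n\EE(\phi_{n,k})^{1/k^d}=\alpha|\cA|$. Combining the two bounds yields $\lim_n\EE(\psi_{n,k})^{1/k^d}=\alpha|\cA|$.

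I do not anticipate a genuine obstacle: once Lemma~\ref{Lemma:EPhi} is available the whole argument is a short bookkeeping exercise, since $\psi_{n,k}$ is trapped between the explicit estimate (\ref{Eqn:EphiRawLB}) and $\phi_{n,k}/|V_{n,k}|$. The one point deserving a moment's care is the control of $|V_{n,k}|$ — i.e. checking that the width-$n$ boundary frame $\partial_n F_k$ is negligible next to $F_k$ in the regime $n/k\to0$ — which is handled by the identity $|\partial_n F_k|=k^d-(k-2n)^d$.
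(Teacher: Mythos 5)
Your proposal is correct and follows essentially the same route as the paper: the raw bound from $|W_n(b)|\le\ell^d$ and $|P_{n,k}|=|\cA|^{\ell^d}$, the upper bound via $\psi_{n,k}\le\phi_{n,k}$ and Lemma \ref{Lemma:EPhi}, and the lower bound by showing $\log|V_{n,k}|\le|\partial_n F_k|\log|\cA|=o(k^d)$ (the paper uses the bound $|\partial_n F_k|\le K(d)nk^{d-1}$ rather than your exact identity, but this is immaterial).
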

\begin{proof}
 Define
\begin{equation*}
Q_{n,k}^j = \{ u \in P_{n,k} : |W_n(u)|=j\},
\end{equation*}
and note that
\begin{align} 
\begin{split}\label{Eqn:RewritePsi}
|V_{n,k}| \, \EE(\psi_{n,k}) & =  \sum_{b \in P_{n,k}} \EE(\xi_b)  \\  & = \sum_{b \in P_{n,k}} \alpha^{|W_n(b)|} \\ & = \sum_{j=1}^{\ell^d} \alpha^j |Q_{n,k}^j|.
\end{split}
\end{align}
Recall that $|P_{n,k}| = |\cA|^{\ell^d}$. Then using (\ref{Eqn:RewritePsi}), we obtain
\begin{align*} 
\EE(\psi_{n,k}) &  = |V_{n,k}|^{-1} \sum_{j=1}^{\ell^d} \alpha^j |Q_{n,k}^j| \\ & \geq |V_{n,k}|^{-1} \alpha^{\ell^d} |P_{n,k}| \\ & = |V_{n,k}|^{-1} \alpha^{\ell^d} |\cA|^{\ell^d},
\end{align*}
which verifies (\ref{Eqn:EphiRawLB}).

Now suppose that $\alpha > |\cA|^{-1}$ (so that $\alpha |\cA|>1$) and $k  = k(n)$ satisfies $n/k \to 0$ and $ \log (k) /n \to 0$.
Since $P_{n,k} \subset \cA^{F_k}$ and $|V_{n,k}|\geq 1$, we have $\psi_{n,k} \leq \phi_{n,k}$. Then by Lemma \ref{Lemma:EPhi}, we have that
\begin{equation} \label{Eqn:EphiUB}
 \limsup_n \EE(\psi_{n,k})^{1/k^d} \leq \limsup_n \EE(\phi_{n,k})^{1/k^d} = \alpha |\cA|.
\end{equation}

For the sake of notation, define $B = \log_{|\cA|} |V_{n,k}|$, and note
that $B \leq |\partial_n F_k| \leq K(d) n k^{d-1}$ for some constant $K(d)$ that depends only on $d$, which implies that $B/k^d \to 0$. Then by (\ref{Eqn:EphiRawLB}) and the fact that $n/k \to 0$, we have that
\begin{equation} \label{Eqn:EphiLB}
\liminf_n \EE(\psi_{n,k})^{1/k^d} \geq \liminf_n |\cA|^{-B/k^d} (\alpha |\cA|)^{(\ell/k)^d} = \alpha |\cA|.
\end{equation}
By (\ref{Eqn:EphiUB}) and (\ref{Eqn:EphiLB}), we obtain
\begin{equation*}
 \lim_n \EE(\psi_{n,k})^{1/k^d} = \alpha |\cA|.
\end{equation*}
\end{proof}

The following lemma shows that the variance of $\psi_{n,k}$ is small compared to the square of its expectation.

\begin{lemma} \label{Lemma:VarPsi}
 Suppose $\alpha |\cA| > 1$ and $k = k(n) = n f(n)$ with $f(n) \to \infty$ and $f(n) = o( (n/ \log n)^{1/d} )$. Then there exist $K_2>0$ and $\rho_2>0$ such that for large enough $n$,
\begin{equation*}
 \frac{\Var(\psi_{n,k})}{\EE(\psi_{n,k})^2} \leq K_2 \exp( - \rho_2 n^d ).
\end{equation*}
\end{lemma}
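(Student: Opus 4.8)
The plan is to run the second-moment computation of Lemma~\ref{Lemma:VarPhi} with $\cA^{F_k}$ replaced by the set $P_{n,k}$ of patterns with periodic $n$-boundary, using Lemma~\ref{Lemma:EPsi} in place of Lemma~\ref{Lemma:EPhi}, and exploiting the rigidity of periodic boundary conditions to control the relevant combinatorial count. First, since $\psi_{n,k} = |V_{n,k}|^{-1}\sum_{b\in P_{n,k}}\xi_b$, the variance equals $|V_{n,k}|^{-2}$ times the sum over $u,v\in P_{n,k}$ of the covariances $\EE\bigl((\xi_u-\EE\xi_u)(\xi_v-\EE\xi_v)\bigr)$; exactly as in (\ref{Eqn:Zowee}), each such covariance equals $\alpha^{|W_n(u)\cup W_n(v)|}\bigl(1-\alpha^{|W_n(u)\cap W_n(v)|}\bigr)$ and vanishes unless $W_n(u)\cap W_n(v)\neq\varnothing$. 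Introducing $E_{n,k}^j = \{(u,v)\in P_{n,k}\times P_{n,k} : W_n(u)\cap W_n(v)\neq\varnothing,\ |W_n(u)\cup W_n(v)|=j\}$, I get $\Var(\psi_{n,k}) \le |V_{n,k}|^{-2}\sum_{j=1}^{2\ell^d-1}\alpha^j|E_{n,k}^j|$. Combining this with the lower bound $\EE(\psi_{n,k}) \ge |V_{n,k}|^{-1}\alpha^{\ell^d}|\cA|^{\ell^d}$ from (\ref{Eqn:EphiRawLB}), the two powers of $|V_{n,k}|$ cancel and
\[
\frac{\Var(\psi_{n,k})}{\EE(\psi_{n,k})^2} \;\le\; \frac{\sum_{j=1}^{2\ell^d-1}\alpha^j|E_{n,k}^j|}{\alpha^{2\ell^d}|\cA|^{2\ell^d}},
\]
which is the analogue of (\ref{Eqn:VarPhinkFirst}), now with normalization $|\cA|^{2\ell^d}$ (coming from $|P_{n,k}|=|\cA|^{\ell^d}$) in place of $|\cA|^{2k^d}$.

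The crux is bounding $|E_{n,k}^j|$, and here the key observation is that an element of $P_{n,k}$ has only $\ell^d$ degrees of freedom: by the defining relation $\ell = k-n+1$, a pattern with periodic $n$-boundary is the restriction to $F_k$ of a configuration of period $\ell$ in every coordinate, so it corresponds to a pattern on the torus $(\Z/\ell)^d$, and $W_n(u)$ is precisely the set of its $\ell^d$ cyclic $F_n$-windows (this is also why $|P_{n,k}|=|\cA|^{\ell^d}$, as noted before (\ref{Eqn:CountPerWords})). Viewing a pair $(u,v)\in E_{n,k}^j$ as a pattern on two disjoint copies of $(\Z/\ell)^d$, there are $2\ell^d$ window positions of which $j$ are distinct, so there are $r = 2\ell^d - j$ repeats; running the injection $\Phi$ into the repeat region exactly as in the proof of Lemma~\ref{Lemma:VarPhi} gives $|\rArea{J}| \ge n^d + (r-1) = 2\ell^d - j + n^d - 1$ for any repeat cover $J$, hence the number of free cells is at most $2\ell^d - |\rArea{J}| \le j - n^d + 1$. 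By the torus analogue of Lemma~\ref{Lemma:GeneralCoding} one may in addition take $|J| \le 4\ell^d/n \le 4k^d/n$, and then Lemma~\ref{Lemma:DefineAWord} (in its torus form) shows $(u,v)$ is determined by $J$ together with its values on the at most $j-n^d+1$ free cells. Therefore
\[
|E_{n,k}^j| \;\le\; (4k^{2d})^{4k^d/n+1}\,|\cA|^{\,j - n^d + 1} \;\le\; p_2(k)^{k^d/n}\,|\cA|^{\,j-n^d+1},
\]
with $p_2(k) = (4k^{2d})^5$ as in (\ref{Eqn:DnkBound}).

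To finish, I plug the last bound into the displayed ratio above, sum the geometric series $\sum_j(\alpha|\cA|)^j \le (\alpha|\cA|)^{2\ell^d}(\alpha|\cA|-1)^{-1}$ (using $\alpha|\cA|>1$), and obtain
\[
\frac{\Var(\psi_{n,k})}{\EE(\psi_{n,k})^2} \;\le\; \frac{p_2(k)^{k^d/n}\,|\cA|\,(\alpha|\cA|-1)^{-1}}{|\cA|^{n^d}},
\]
which is identical to (\ref{Eqn:VarPhinkSecond}). From this point the calculation of (\ref{Eqn:VarPhinkThird}) applies verbatim: writing $f(n) = g_1(n)(n/\log n)^{1/d}$ with $g_1(n)\to0$ and using $k\le n^2$ and $p_2(k)\le k^{10d+1}$ for large $n$, the right-hand side is at most $|\cA|(\alpha|\cA|-1)^{-1}\exp\bigl(n^d(2(10d+1)g_1(n)^d-\log|\cA|)\bigr)$, so there exist $K_2,\rho_2>0$ with $\Var(\psi_{n,k})/\EE(\psi_{n,k})^2 \le K_2\exp(-\rho_2 n^d)$ for large $n$.

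I expect the only genuine work beyond Lemma~\ref{Lemma:VarPhi} to be the combinatorial step above, and within it the bookkeeping needed to transfer Lemmas~\ref{Lemma:DefineAWord} and~\ref{Lemma:GeneralCoding} to the torus $(\Z/\ell)^d$ — in particular, making sense of ``lexicographically minimal appearance'' and of the reconstruction induction when an $F_n$-window wraps around. Equivalently, one must show that the $\sim nk^{d-1}$ cells of $F_k$ lying outside a fundamental domain $F_\ell$ carry no genuine degrees of freedom, so that the free-cell count in the repeat-cover reconstruction is $O(\ell^d)$ rather than $O(k^d)$; this is exactly the feature distinguishing the $\psi$ computation from the $\phi$ computation, where the factor $|\cA|^{2k^d}$ in $\EE(\phi_{n,k})^2$ already absorbs the discrepancy. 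Once this reduction to the torus is in place, every subsequent estimate is a line-by-line repetition of the proof of Lemma~\ref{Lemma:VarPhi}.
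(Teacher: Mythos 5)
There is a genuine gap: your key combinatorial bound $|E_{n,k}^j| \le p_2(k)^{k^d/n}\,|\cA|^{\,j-n^d+1}$ is false, and the error is in the step ``running the injection $\Phi$ \dots gives $|\rArea{J}| \ge n^d+(r-1)$, hence the number of free cells is at most $2\ell^d - |\rArea{J}| \le j-n^d+1$.'' In the proof of Lemma~\ref{Lemma:VarPhi} the injection $\Phi$ places the \emph{entire} $n$-cube $S_2^*$ (all $n^d$ cells) into the repeat region, and this is consistent only because there the repeat region lives in $F_k\sqcup F_k'$, which has $2k^d\gg 2\ell^d$ cells. Once you pass to the torus $(\Z/\ell)^d$ (equivalently to the fundamental domains $F_\ell\sqcup(F_\ell+k\1)$), the repeat region is confined to a set of only $2\ell^d$ cells and the cyclic windows overlap heavily; the inequality $|\rArea{J}|\ge n^d+r-1 = 2\ell^d-j+n^d-1$ then exceeds $2\ell^d$ whenever $j<n^d-1$ and so cannot hold. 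Concretely, for $j=1$ the set $E_{n,k}^1$ consists of the $|\cA|$ pairs of equal constant patterns, while your bound $p_2(k)^{k^d/n}|\cA|^{2-n^d}$ tends to $0$; and for $u=v$ with all $\ell^d$ windows distinct one has $j=r=\ell^d$, the repeat region on the tori is exactly the second torus ($\ell^d$ cells, not $\ge n^d+\ell^d-1$), and the free region is the entire first torus ($j$ cells, not $j-n^d+1$). What the fundamental domain actually supports is only the one-cell-per-repeat injection $(S_1,S_2)\mapsto m(S_2)$ of the paper's (\ref{Eqn:GoBears}), which yields free cells $\le j$ and hence $|E_{n,k}^j|\le p_2(k)^{k^d/n}|\cA|^j$ --- and with that bound alone the geometric sum up to $2\ell^d-1$, divided by $(\alpha|\cA|)^{2\ell^d}$, is of order $1$: no decay at all.

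This is precisely why the paper's proof splits the sum at $b=2\ell^d-n^d$. For $j<b$ it uses the weak bound $|\hat S_{n,k}^j|\le p_2(k)^{k^d/n}|\cA|^j$ and extracts the decay from the \emph{truncation} of the geometric series, $\sum_{j<b}(\alpha|\cA|)^{j-2\ell^d}\le(\alpha|\cA|-1)^{-1}(\alpha|\cA|)^{-n^d}$. For $j\ge b$ it abandons the repeat-cover count entirely and uses the hypothesis $W_n(u)\cap W_n(v)\ne\varnothing$ directly: the injection $\varphi$ of (\ref{Eqn:SnkEst2}) records one matching pair of window positions and then reconstructs $v$ from $u$ together with only $\ell^d-n^d$ additional cells, giving $|S^{2\ell^d-1}_{n,k}|\le k^{2d}|\cA|^{2\ell^d-n^d}$. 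Your proposal is missing this second mechanism; to repair it you must add the case split and an argument of the $\varphi$ type (or some substitute that extracts a saving of $|\cA|^{n^d}$ when $j$ is close to $2\ell^d$). The surrounding structure of your argument --- the covariance identity, the cancellation of the $|V_{n,k}|$ factors against (\ref{Eqn:EphiRawLB}), and the final asymptotic step --- does match the paper.
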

\begin{proof}
Define
\begin{equation*}
\hat{S}_{n,k}^j = \{(u,v) \in P_{n,k} \times P_{n,k} : W_n(u) \cap W_n(v) \neq \varnothing, \, |W_n(u) \cup W_n(v)| = j\},
\end{equation*}
and
\begin{equation*}
S_{n,k}^j   =  \bigcup_{i=1}^j \hat{S}_{n,k}^i.
\end{equation*}
As in (\ref{Eqn:Zowee}) and (\ref{Eqn:VarPhinkRawUB}), we have
\begin{align}
\begin{split} \label{Eqn:VarPsiFirst}
\Var(\psi_{n,k}) & = |V_{n,k}|^{-2} \sum_{u,v \in P_{n,k}} \alpha^{|W_n(u) \cup W_n(v)|} \Bigl( 1 - \alpha^{|W_n(u) \cap W_n(v)|} \Bigr) \\
 &  \leq |V_{n,k}|^{-2} \sum_{\substack{u,v \in P_{n,k} \\ W_n(u) \cap W_n(v) \neq \varnothing}} \alpha^{|W_n(u) \cup W_n(v)|} \\
 & = |V_{n,k}|^{-2}\sum_{j = 1}^{2\ell^d-1} \alpha^j |\hat{S}_{n,k}^j|. 
\end{split}
\end{align}

Let $b = b(n) = 2\ell^d - n^d$. Then by Lemma \ref{Lemma:EPsi} and (\ref{Eqn:VarPsiFirst}), we have
\begin{align}
\begin{split} \label{Eqn:VarPsiSecond}
\frac{\Var(\psi_{n,k})}{\EE(\psi_{n,k})^2} & \leq \frac{\sum_{j = 1}^{2\ell^d-1} \alpha^j |\hat{S}_{n,k}^j|}{ (\alpha |\cA|)^{2\ell^d} } \\
 & \leq \frac{\sum_{j = 1}^{b-1} \alpha^j |\hat{S}_{n,k}^j| + \sum_{j = b}^{2\ell^d-1} \alpha^j |\hat{S}_{n,k}^j|}{ (\alpha |\cA|)^{2\ell^d} } \\
 & \leq \frac{\sum_{j = 1}^{b-1} \alpha^j |\hat{S}_{n,k}^j| + \alpha^b  \sum_{j = b}^{2\ell^d-1} |\hat{S}_{n,k}^j|}{ (\alpha |\cA|)^{2\ell^d} } \\
& \leq \frac{\sum_{j = 1}^{b-1} \alpha^j |\hat{S}_{n,k}^j| + \alpha^b  |S_{n,k}^{2\ell^d-1}|}{ (\alpha |\cA|)^{2 \ell^d} }
\end{split}
\end{align}
We proceed by finding upper bounds for $|\hat{S}_{n,k}^j|$ and $|S_{n,k}^{2\ell^d-1}|$.

First, we seek an upper bound for $|\hat{S}_{n,k}^j|$. As in the proof of Lemma \ref{Lemma:VarPhi}, we consider pairs $(u,v)$ in $P_{n,k} \times P_{n,k}$ as elements of $\cA^{F_k \sqcup F'_k}$, where we take $F'_k = F_k + k \1$ for concreteness. For such a pair $(u,v)$, let $r$ be the total number of repeats in $(u,v)$. Then, as before, $2 \ell^d = j + r$, and so $r = 2\ell^d - j$. Let $V$ be the repeat region, \textit{i.e.}, $V = \rArea{J}$ for any repeat cover $J$ of $(u,v)$ (recall that the set $\rArea{J}$ is independent of the choice of $J$). The map $(S_1,S_2) \mapsto m(S_2)$ is an injection of the set of repeats in $(u,v)$ into the set $V \cap ( F_{\ell} \sqcup (F_{\ell}+k \1))$, and therefore
\begin{equation*}
 |V \cap \bigl(F_{\ell} \sqcup (F_{\ell}+k\mathbf{1}) \bigr) | \geq r = 2 \ell^d - j.
\end{equation*}
Hence,
\begin{align}
\begin{split} \label{Eqn:GoBears}
 |\bigl(F_{\ell} \sqcup (F_{\ell}+k\mathbf{1}) \bigr) \setminus V| & = 2\ell^d - | V \cap ( F_{\ell} \sqcup (F_{\ell}+k\mathbf{1})) | \\ & \leq 2 \ell^d - (2\ell^d - j) = j.
\end{split}
\end{align}
By Lemma \ref{Lemma:GeneralCoding} (viewing the full $d$-cube $F_k$ as a face of dimension $d$), for each $(u,v)$ in $\hat{S}_{n,k}^j$, there exists a repeat cover $J$ of $(u,v)$ such that $|J| \leq 4 k^d/n$. Furthermore, 
each element $(u,v)$ of $\hat{S}_{n,k}^j$ is uniquely determined by a repeat cover $J$ and the pattern $w=(u,v)|_{\bigl(F_{\ell} \sqcup (F_{\ell}+k\mathbf{1})\bigr) \setminus \rArea{J}}$ (by Lemma \ref{Lemma:DefineAWord} and the fact that $u$ and $v$ have periodic boundary). Thus, bounding $|\hat{S}_{n,k}^j|$ by the number of pairs $(J,w)$ such that $J \subset \cC_n(F_k \sqcup F'_k) \times \cC_n(F_k \sqcup F'_k)$ with $|J| \leq 4 k^d/n$ and $w \in \cA^{\bigl(F_{\ell} \sqcup (F_{\ell}+k\mathbf{1})\bigr) \setminus \rArea{J}}$ and using (\ref{Eqn:GoBears}), we see that
\begin{equation} \label{Eqn:SnkEst1}
 |\hat{S}_{n,k}^j | \leq (4k^{2d})^{4 k^d/n+1} |\cA|^j.
\end{equation}
Here we have used that $4k^{2d}$ is an upper bound on the number of pairs of hypercubes in $F_k \sqcup F'_k$. As before, let $p_2(k) = (4k^{2d})^5$.

Let us now obtain an upper bound on the cardinality of $S_{n,k}^{2\ell^d-1}$. Here we will use the fact that for any pair $(u,v)$ in $S_{n,k}^{2\ell^d-1}$, we have that $W_n(u) \cap W_n(v) \neq \varnothing$.
Let $(u,v)$ be in $S_{n,k}^{2\ell^d - 1}$. Due to the periodicity of the boundary of $v$, the pattern $v|_{F_{\ell}+q}$ uniquely determines $v$ for each $q$ such that $F_{\ell}+q \subset F'_k$. Choose the lexicographically minimal $n$-cube $S_2^*$ in $\cC_n(F'_k)$ such that $v|_{S_2^*} \in W_n(u)$ (which exists since $W_n(u) \cap W_n(v) \neq \varnothing$), and then choose the lexicographically minimal $n$-cube in $S_1^*$ in $F_k$ such that $v|_{S_1^*} = v|_{S_2^*}$. Now choose the lexicographically minimal point $q$ in $F'_k$ such that $S_2^* \subset F_{\ell}+q \subset F'_k$ (and note that $q$ is uniquely determined by $S_2^*$). Let $\varphi$ map $(u,v)$ to $(S_1^*,S_2^*,u,v|_{(F_{\ell}+q)\setminus S_2^*})$. Then $\varphi$ is injective, and therefore
\begin{equation} \label{Eqn:SnkEst2}
|S_{n,k}^{2 \ell^d -1}| = |\varphi(S_{n,k}^{2\ell^d - 1})| \leq  k^{2d} |\cA|^{\ell^d} |\cA|^{\ell^d - n^d} =  k^{2d} |\cA|^{2\ell^d - n^d}. 
\end{equation}
Here we have used that $k^{2d}$ is an upper bound on the cardinality of the possible pairs $(S_1,S_2)$ appearing in the first two coordinates of the image of $\varphi$. Note that $k^{2d} \leq p_2(k)= (4k^{2d})^5$.

By (\ref{Eqn:VarPsiSecond}), (\ref{Eqn:SnkEst1}) and (\ref{Eqn:SnkEst2}), we have
\begin{align}
 \begin{split} \label{Eqn:VarPsinkLast}
  \frac{\Var(\psi_{n,k})}{\EE(\psi_{n,k})^2} & \leq \frac{\sum_{j = 1}^{b-1} \alpha^j |\hat{S}_{n,k}^j| + \alpha^b  |S_{n,k}^{2\ell^d-1}|}{ (\alpha |\cA|)^{2 \ell^d} } \\
 & \leq \frac{p_2(k)^{k^d/n} \sum_{j = 1}^{b-1} \alpha^j |\cA|^j + \alpha^b  p_2(k) |\cA|^{2\ell^d - n^d}} { (\alpha |\cA|)^{2 \ell^d} }  \\
 & \leq \frac{ (\alpha |\cA| - 1)^{-1} p_2(k)^{k^d/n} (\alpha |\cA|)^{b}}{ (\alpha |\cA|)^{2 \ell^d}}  + \frac{ (\alpha |\cA|)^{2 \ell^d - n^d}  p_2(k) } { (\alpha |\cA|)^{2 \ell^d} }  \\
 & \leq \frac{ (\alpha |\cA| - 1)^{-1} p_2(k)^{k^d/n} }{ (\alpha |\cA|)^{n^d}}  + \frac{  p_2(k) } { (\alpha |\cA|)^{n^d} } .
 \end{split}
\end{align}

By hypothesis, we may write $k = n f(n)$, where $f(n) = o((n/\log n)^{1/d})$. Using this hypothesis, the fact that $\alpha |\cA| >1$ and (\ref{Eqn:VarPsinkLast}), we see that there exist $K_2>0$ and $\rho_2>0$ such that for large enough $n$,
\begin{equation*}
 \frac{\Var(\psi_{n,k})}{\EE(\phi_{n,k})^2} \leq K_2 \exp( - \rho_2 n^d ).
\end{equation*}
\end{proof}

\subsection{Proof of Theorems \ref{Thm:EntropyIntro} and \ref{Thm:PeriodicEntropyIntro}} \label{Sect:EntropyProof}

Here we present a unified proof of Theorems \ref{Thm:EntropyIntro} and \ref{Thm:PeriodicEntropyIntro}. The proof essentially breaks into two parts, the upper bound on entropy and the lower bound on the periodic entropy. In each case we use a second moment argument, relying on Chebyshev's inequality.  The upper bound is a consequence of Lemmas \ref{Lemma:EPhi} and \ref{Lemma:VarPhi}, and the lower bound is a consequence of Lemmas \ref{Lemma:EPsi} and \ref{Lemma:VarPsi}. 

\vspace{2mm}

\begin{PfofEntropyThm}{}
Choose $k = k(n) = n f(n)$ with $f(n) \to \infty$ and $f(n) = o((n/\log n)^{1/d})$.
By subadditivity, for any $k >n$, we have that
\begin{equation} \label{Eqn:hUB}
 h(X_{\omega}) \leq \frac{1}{k^d} \log |W_k(X_{\omega})| = \frac{1}{k^d} \log \phi_{n,k}.
\end{equation}
Let $\beta_n(\omega) = \exp( h(X_{\omega}))$ and $\eta_n(\omega) = \exp( h_{\text{per}}(X_{\omega}) )$. Then (\ref{Eqn:hUB}) may be rewritten as
\begin{equation} \label{Eqn:betaUB}
 \beta_n^{k^d} \leq |W_k(X_{\omega})| = \phi_{n,k}.
\end{equation}
On the other hand, 
by Lemma \ref{Lemma:LowerBdLemma}, we have that
\begin{equation*}
 \frac{1}{k^d} \log \psi_{n,k} \leq h_{\text{per}}(X_{\omega}) = \log \eta_n, 
\end{equation*}
and therefore
\begin{equation} \label{Eqn:betaLB}
 \psi_{n,k} \leq \eta_n^{k^d}.
\end{equation}

Suppose $\alpha > |\cA|^{-1}$ (so that $\alpha |\cA| > 1$), and let $\epsilon >0$ be such that $\alpha |\cA| - \epsilon >1$. Then
\begin{align}
 \begin{split} \label{Eqn:ProbHfar}
  \PP\Bigl( | \beta_n - \alpha |\cA| | \geq \epsilon & \text{ or } | \eta_n - \alpha |\cA| | \geq \epsilon  \Bigr) \\
 & \leq \PP\Bigl( \phi_{n,k} \geq (\alpha |\cA| +\epsilon)^{k^d} \Bigr) +  \PP\Bigl( \psi_{n,k} \leq (\alpha |\cA| - \epsilon)^{k^d} \Bigr),
 \end{split}
\end{align}
where the inequality follows from (\ref{Eqn:betaUB}), (\ref{Eqn:betaLB}), and $\eta_n \leq \beta_n$ by inclusion.
Note that
\begin{align*}
 \begin{split}
  \PP\Bigl( \phi_{n,k} & \geq (\alpha |\cA| +\epsilon)^{k^d} \Bigr) \\
 & = \PP\Bigl( \phi_{n,k} - \EE(\phi_{n,k}) \geq (\alpha |\cA| +\epsilon)^{k^d} - \EE(\phi_{n,k}) \Bigr) \\
 & = \PP\Biggl( \phi_{n,k} - \EE(\phi_{n,k}) \geq \EE(\phi_{n,k}) \biggl( \biggl(\frac{\alpha |\cA| +\epsilon}{\EE(\phi_{n,k})^{1/k^d}}\biggr)^{k^d} - 1 \biggr) \Biggr). 
 \end{split}
\end{align*}
Let $d_1 = (\Var(\phi_{n,k}))^{1/2} / \EE(\phi_{n,k})$. Then by Chebyshev's inequality, 
\begin{align*}
\begin{split}
 \PP\Bigl( & \phi_{n,k}  \geq (\alpha |\cA| +\epsilon)^{k^d} \Bigr) \\
 & = \PP \Biggl( \phi_{n,k} - \EE(\phi_{n,k}) \geq (\Var(\phi_{n,k}))^{1/2} \frac{1}{d_1} \biggl( \biggl(\frac{\alpha |\cA| +\epsilon}{\EE(\phi_{n,k})^{1/k^d}}\biggr)^{k^d} - 1 \biggr) \Biggr) \\
 & \leq \Biggl( \frac{d_1}{\bigl((\alpha |\cA| +\epsilon)/\EE(\phi_{n,k})^{1/k^d}\bigr)^{k^d} - 1 } \Biggr)^2.
\end{split}
\end{align*}
Since $\EE(\phi_{n,k})^{1/k^d}$ tends to $\alpha |\cA|$ (Lemma \ref{Lemma:EPhi}) and $d_1^2 \leq K_1 \exp( - \rho_1 n^d)$ for large enough $n$ (Lemma \ref{Lemma:VarPhi}), we obtain that there exist $K_3>0$ and $\rho_3 >0$ such that for large enough $n$,
\begin{equation} \label{Eqn:ProbHtooBig}
 \PP\Bigl( \phi_{n,k} \geq (\alpha |\cA| +\epsilon)^{k^d} \Bigr) \leq K_3 \exp( - \rho_3 n^d).
\end{equation}

Similarly, letting $d_2 = (\Var(\psi_{n,k}))^{1/2} / \EE(\psi_{n,k})$ and using Chebyshev's inequality gives
\begin{align*}
 \PP\Bigl( & \psi_{n,k} \leq (\alpha |\cA| - \epsilon)^{k^d} \Bigr) \\
 & = \PP \Biggl( \psi_{n,k} - \EE(\psi_{n,k}) \geq (\Var(\psi_{n,k}))^{1/2} \frac{1}{d_2} \biggl( \biggl(\frac{\alpha |\cA| -\epsilon}{\EE(\psi_{n,k})^{1/k^d}}\biggr)^{k^d} - 1 \biggr) \Biggr) \\
 & \leq \Biggl( \frac{d_2}{\bigl((\alpha |\cA| - \epsilon)/\EE(\psi_{n,k})^{1/k^d}\bigr)^{k^d} - 1 } \Biggr)^2.
\end{align*}
Since $\EE(\psi_{n,k})^{1/k^d}$ tends to $\alpha |\cA|$ (Lemma \ref{Lemma:EPsi}) and $d_2^2 \leq K_2 \exp( - \rho_2 n^d)$ for large enough $n$ (Lemma \ref{Lemma:VarPsi}), we obtain that there exist $K_4 >0$ and $\rho_4>0$ such that for large enough $n$,
\begin{equation} \label{Eqn:ProbHtooSmall}
 \PP\Bigl( \psi_{n,k} \leq (\alpha |\cA| -\epsilon)^{k^d} \Bigr) \leq K_4 \exp( - \rho_4 n^d).
\end{equation}
Combining (\ref{Eqn:ProbHfar}), (\ref{Eqn:ProbHtooBig}) and (\ref{Eqn:ProbHtooSmall}), we have that there exist $K_5$ and $\rho_5$ such that for large enough $n$,
\begin{equation*}
 \PP\Bigl( | \beta_n - \alpha |\cA| | \geq \epsilon \text{ or } |\eta_n - \alpha |\cA| | \geq \epsilon  \Bigr) \leq K_5 \exp( - \rho_5 n^d),
\end{equation*}
which is equivalent to the conclusions of the theorems for $\alpha > |\cA|^{-1}$ by the continuity of the logarithm.

Now suppose that $\alpha \leq |\cA|^{-1}$. Let $\epsilon > 0$. Choose $\alpha' > |\cA|^{-1}$ such that $\log(\alpha' |\cA|) = \epsilon/2$. Since entropy is a monotone increasing random variable (\textit{i.e.}, if $\omega \leq \omega'$ then $h(X_{\omega}) \leq h(X_{\omega'})$) and $\alpha < \alpha'$, then by \cite[Theorem 2.1]{Grimmett}, we have that
\begin{equation*}
 \PP( h(X_\omega) \geq \epsilon ) \leq \P_{n,\alpha'}( h(X_{\omega}) \geq \epsilon).
\end{equation*}
Also, since $\log(\alpha' |\cA|) = \epsilon/2$, we have
\begin{equation*}
 \P_{n,\alpha'} \Bigl( h(X_{\omega}) \geq \epsilon \Bigr) = \P_{n,\alpha'} \Bigl( h(X_{\omega}) - \log(\alpha' |\cA|) \geq \epsilon/2 \Bigr).
\end{equation*}
Since $\alpha' > |\cA|^{-1}$, we have already shown that there exist $K >0$ and $\rho >0$ such that for large enough $n$,
\begin{equation*}
 \P_{n,\alpha'}\Bigl( h(X_{\omega}) - \log(\alpha' |\cA|) \geq \epsilon/2 \Bigr) \leq K \exp ( - \rho n^d ).
\end{equation*}
By combining the previous three displays and using that $h_{\text{per}}(X_{\omega}) \leq h(X_{\omega})$, we obtain the desired conclusions.
\end{PfofEntropyThm}



\section{Discussion} \label{Sect:Discussion}

Let us close with some general remarks and open questions regarding the behavior of random $\Z^d$-SFTs.
\begin{rmk}
 There is a more general setting for $\Z^d$-SFTs than the one considered in this work. Suppose $X$ is a $\Z^d$-SFT. Then one may obtain a probability distribution on the $\Z^d$-SFTs contained in $X$ by randomly forbidding patterns from $W_n(X)$ with some probability $\alpha$. Hence, one may ask about the likely properties of random $\Z^d$-SFTs contained inside an ambient $\Z^d$-SFT $X$. In \cite{McGoff}, this more general setting was studied for $\Z$-SFTs, and results analogous to Theorems \ref{Thm:EmptinessIntro} and \ref{Thm:EntropyIntro} were shown to hold whenever $X$ is an irreducible $\Z$-SFT. In this work, we only allow the ambient shift $X$ to be a full shift, as full shifts seem to provide the only cleanly defined class of $\Z^d$-SFTs for $d \geq 2$ that possess all of the properties required for our proofs. Nonetheless, it would be interesting to understand the behavior of random $\Z^d$-SFTs inside of other ambient shifts.
\end{rmk}

\begin{rmk}
It has been quite difficult to prove $\Z^d$-SFT versions of many fundamental theorems about $\Z$-SFTs (for example, consider factor theorems \cite{Boyle1983,BPS,Marcus1979}, embedding theorems \cite{Krieger1982,Lightwood2003,Lightwood2004}, and uniqueness of measure of maximal entropy theorems \cite{BurtonSteif,Parry1964}). 
The difficulty in extending such theorems is often caused by the strange or pathological behavior that can occur in some $\Z^d$-SFTs, which either removes hope for the $\Z^d$ result entirely or forces stringent hypotheses to absolutely rule out the "bad" examples. 

However, the results of this paper suggest that "typical" $\Z^d$-SFTs may avoid these pathological behaviors. Therefore, it may be possible to prove versions of $\Z$-theorems for "typical" $\Z^d$-SFTs. 
In particular, there may be some $\Z$-SFT theorems which hold, not for all $\Z^d$-SFTs, but for sets of $\Z^d$-SFTs that have probability tending to one as $n$ approaches infinity (for certain values of $\alpha$). 
For example, one may ask whether  ``typical'' $\Z^d$-SFTs have a unique measure of maximal entropy.
Previous work  \cite[Theorem 1.4]{McGoff} implies that for $\alpha$ close to one, a random $\Z$-SFT has a unique measure of maximal entropy with probability tending to one as $n$ tends to infinity. 
\end{rmk}

\begin{rmk}
 As mentioned in the introduction, the class of $\Z^d$-SFTs exhibits strikingly different behavior in the two cases $d=1$ and $d>1$. However, our results suggest that these differences may not appear for ``typical'' systems. Indeed, the main results presented in this work give a precise sense in which typical $\Z^d$-SFTs behave similarly with respect to emptiness, entropy, and periodic points, regardless of $d$. Thus, there remains an interesting open question, which we formulate as follows. Does there exist a property $\mathcal{Q}$ of SFTs, a probability $\alpha \in [0,1]$, an alphabet $\cA$, and natural numbers $d_1 \neq d_2$ such that $\inf_n \PP(\mathcal{Q}) >0$ in dimension $d_1$ and $\lim_n \PP(\mathcal{Q}) = 0$ in dimension $d_2$? 
\end{rmk}

\section*{Acknowledgements}
The authors thank Mike Boyle for valuable comments regarding early versions of the manuscript.

\bibliographystyle{abbrv}
\bibliography{RandomZdSFTsRefs}

\begin{thebibliography}{10}

\bibitem{AM2013}
E.~Abbe and A.~Montanari.
\newblock On the concentration of the number of solutions of random
  satisfiability formulas.
\newblock {\em Random Structures Algorithms}, 2013.

\bibitem{ACR2011}
D.~Achlioptas, A.~Coja-Oghlan, and F.~Ricci-Tersenghi.
\newblock On the solution-space geometry of random constraint satisfaction
  problems.
\newblock {\em Random Structures Algorithms}, 38(3):251--268, 2011.

\bibitem{ADP2005}
D.~Achlioptas, A.~Naor, and Y.~Peres.
\newblock Rigorous location of phase transitions in hard optimization problems.
\newblock {\em Nature}, 435(7043):759--764, 2005.

\bibitem{AP2004}
D.~Achlioptas and Y.~Peres.
\newblock The threshold for random {$k$}-{SAT} is {$2^k\log 2-O(k)$}.
\newblock {\em J. Amer. Math. Soc.}, 17(4):947--973 (electronic), 2004.

\bibitem{Berger1966}
R.~Berger.
\newblock The undecidability of the domino problem.
\newblock {\em Mem. Amer. Math. Soc. No.}, 66:72, 1966.

\bibitem{Bowen}
R.~Bowen.
\newblock {\em Equilibrium states and the ergodic theory of {A}nosov
  diffeomorphisms}, volume 470 of {\em Lecture Notes in Mathematics}.
\newblock Springer-Verlag, Berlin, revised edition, 2008.
\newblock With a preface by David Ruelle, Edited by Jean-Ren{\'e} Chazottes.

\bibitem{Boyle1983}
M.~Boyle.
\newblock Lower entropy factors of sofic systems.
\newblock {\em Ergodic theory and dynamical systems}, 3(04):541--557, 1983.

\bibitem{BPS}
M.~Boyle, R.~Pavlov, and M.~Schraudner.
\newblock Multidimensional sofic shifts without separation and their factors.
\newblock {\em Trans. Amer. Math. Soc.}, 362(9):4617--4653, 2010.

\bibitem{BurtonSteif}
R.~Burton and J.~E. Steif.
\newblock Non-uniqueness of measures of maximal entropy for subshifts of finite
  type.
\newblock {\em Ergodic Theory Dynam. Systems}, 14(2):213--235, 1994.

\bibitem{DGS}
M.~Denker, C.~Grillenberger, and K.~Sigmund.
\newblock {\em Ergodic theory on compact spaces}.
\newblock Lecture Notes in Mathematics, Vol. 527. Springer-Verlag, Berlin,
  1976.

\bibitem{FineWilf}
N.~J. Fine and H.~S. Wilf.
\newblock Uniqueness theorems for periodic functions.
\newblock {\em Proc. Amer. Math. Soc.}, 16:109--114, 1965.

\bibitem{Friedgut1999}
E.~Friedgut.
\newblock Sharp thresholds of graph properties, and the {$k$}-sat problem.
\newblock {\em J. Amer. Math. Soc.}, 12(4):1017--1054, 1999.
\newblock With an appendix by Jean Bourgain.

\bibitem{Grimmett}
G.~Grimmett.
\newblock {\em Percolation}, volume 321 of {\em Grundlehren der Mathematischen
  Wissenschaften [Fundamental Principles of Mathematical Sciences]}.
\newblock Springer-Verlag, Berlin, second edition, 1999.

\bibitem{Hochman}
M.~Hochman.
\newblock On the dynamics and recursive properties of multidimensional symbolic
  systems.
\newblock {\em Invent. Math.}, 176(1):131--167, 2009.

\bibitem{HM}
M.~Hochman and T.~Meyerovitch.
\newblock A characterization of the entropies of multidimensional shifts of
  finite type.
\newblock {\em Ann. of Math. (2)}, 171(3):2011--2038, 2010.

\bibitem{Keane}
M.~S. Keane.
\newblock Ergodic theory and subshifts of finite type.
\newblock In {\em Ergodic theory, symbolic dynamics, and hyperbolic spaces
  ({T}rieste, 1989)}, Oxford Sci. Publ., pages 35--70. Oxford Univ. Press, New
  York, 1991.

\bibitem{Kitchens}
B.~Kitchens.
\newblock {\em Symbolic dynamics}.
\newblock Universitext. Springer-Verlag, Berlin, 1998.
\newblock One-sided, two-sided and countable state {M}arkov shifts.

\bibitem{Krieger1982}
W.~Krieger.
\newblock On the subsystems of topological {M}arkov chains.
\newblock {\em Ergodic Theory and Dynamical Systems}, 2(02):195--202, 1982.

\bibitem{KMRSZ2007}
F.~Krzaka{\l}a, A.~Montanari, F.~Ricci-Tersenghi, G.~Semerjian, and
  L.~Zdeborov{\'a}.
\newblock Gibbs states and the set of solutions of random constraint
  satisfaction problems.
\newblock {\em Proceedings of the National Academy of Sciences},
  104(25):10318--10323, 2007.

\bibitem{Lightwood2003}
S.~J. Lightwood.
\newblock Morphisms from non-periodic {$\mathbb Z^2$}-subshifts. {I}.
  {C}onstructing embeddings from homomorphisms.
\newblock {\em Ergodic Theory Dynam. Systems}, 23(2):587--609, 2003.

\bibitem{Lightwood2004}
S.~J. Lightwood.
\newblock Morphisms from non-periodic {$\mathbb Z^2$} subshifts. {II}.
  {C}onstructing homomorphisms to square-filling mixing shifts of finite type.
\newblock {\em Ergodic Theory Dynam. Systems}, 24(4):1227--1260, 2004.

\bibitem{LindZeta}
D.~Lind.
\newblock A zeta function for {${\bf Z}^d$}-actions.
\newblock In {\em Ergodic theory of {${\bf Z}^d$} actions ({W}arwick,
  1993--1994)}, volume 228 of {\em London Math. Soc. Lecture Note Ser.}, pages
  433--450. Cambridge Univ. Press, Cambridge, 1996.

\bibitem{LindMarcus}
D.~Lind and B.~Marcus.
\newblock {\em An introduction to symbolic dynamics and coding}.
\newblock Cambridge University Press, Cambridge, 1995.

\bibitem{LindRealization}
D.~A. Lind.
\newblock The entropies of topological {M}arkov shifts and a related class of
  algebraic integers.
\newblock {\em Ergodic Theory Dynam. Systems}, 4(2):283--300, 1984.

\bibitem{Marcus1979}
B.~Marcus.
\newblock Factors and extensions of full shifts.
\newblock {\em Monatshefte f{\"u}r Mathematik}, 88(3):239--247, 1979.

\bibitem{McGoff}
K.~McGoff.
\newblock Random subshifts of finite type.
\newblock {\em Ann. Probab.}, 40(2):648--694, 2012.

\bibitem{MorseHedlund}
M.~Morse and G.~A. Hedlund.
\newblock Symbolic {D}ynamics.
\newblock {\em Amer. J. Math.}, 60(4):815--866, 1938.

\bibitem{Parry1964}
W.~Parry.
\newblock Intrinsic {M}arkov chains.
\newblock {\em Transactions of the American Mathematical Society}, pages
  55--66, 1964.

\bibitem{QuasSahin2003}
A.~Quas and A.~A. {\c{S}}ahin.
\newblock Entropy gaps and locally maximal entropy in {$\mathbb Z^d$}
  subshifts.
\newblock {\em Ergodic Theory Dynam. Systems}, 23(4):1227--1245, 2003.

\bibitem{Ruelle}
D.~Ruelle.
\newblock {\em Thermodynamic formalism}.
\newblock Cambridge Mathematical Library. Cambridge University Press,
  Cambridge, second edition, 2004.
\newblock The mathematical structures of equilibrium statistical mechanics.

\end{thebibliography}

\end{document}